\NeedsTeXFormat{LaTeX2e}
\documentclass[a4paper,12pt,reqno]{amsart}
\usepackage{amssymb}
\usepackage{amsmath}
\usepackage{mathtools}
\usepackage{enumitem}
\usepackage{mathrsfs}
\usepackage[all]{xy}
\setcounter{tocdepth}{1}
\usepackage{mathtools}
\usepackage{hyperref}
\usepackage{url}
\usepackage{bbm}
\usepackage{longtable}

\usepackage[OT2,T1]{fontenc}
\DeclareSymbolFont{cyrletters}{OT2}{wncyr}{m}{n}
\usepackage[british]{babel}

\usepackage[margin=1.3in]{geometry}
\setlength{\belowcaptionskip}{-0.3em}

\numberwithin{equation}{section} \numberwithin{figure}{section}

\DeclareMathOperator{\Pic}{Pic} 
\DeclareMathOperator{\Gal}{Gal} 
\DeclareMathOperator{\Aut}{Aut}

\DeclareMathOperator{\Hom}{Hom} \DeclareMathOperator{\re}{Re}
\DeclareMathOperator{\im}{Im}   
\DeclareMathOperator{\vol}{vol}

\DeclareMathOperator{\Tor}{Tor} \DeclareMathOperator{\Ext}{Ext}
\DeclareMathOperator{\rad}{rad}
\DeclareMathOperator{\Res}{R}

\DeclareMathSymbol{\Sha}{\mathalpha}{cyrletters}{"58}

\newcommand{\OO}{\mathcal{O}}

\newcommand\exterior{\wedge^2}
\newcommand{\legendre}[2]{\left(\frac{#1}{#2}\right)}
\newcommand{\dual}[1]{{#1}^{\wedge}}
\newcommand{\QZdual}[1]{{#1}^{\sim}}
\newcommand{\Cdual}[1]{\widehat{#1}}
\newcommand{\Const}{\mathbbm{1}}

\newcommand\ZZ{\mathbb{Z}}
\newcommand\NN{\mathbb{N}}
\newcommand\QQ{\mathbb{Q}}

\newcommand\CC{\mathbb{C}}
\newcommand\GG{\mathbb{G}}

\newcommand\Gm{\GG_\mathrm{m}}

\newcommand{\Adele}{\mathbf{A}}

\newcommand{\fp}{\mathfrak{p}}
\newcommand{\gextk}{\ensuremath{G\text{-ext}(k)}}
\newcommand{\pair}[2]{\ensuremath{\langle #1, #2 \rangle}}

\newtheorem{lemma}{Lemma}

\newtheorem{theorem}[lemma]{Theorem}
\newtheorem{proposition}[lemma]{Proposition}

\theoremstyle{definition}

\newtheorem{remark}[lemma]{Remark}

\usepackage[usenames,dvipsnames]{color}

\numberwithin{lemma}{section}

\begin{document}

\title[The Hasse norm principle for abelian extensions]
{The Hasse norm principle for abelian extensions}

\author{\sc Christopher Frei}
\address{Christopher Frei\\
School of Mathematics\\
University of Manchester\\
Oxford Road\\
Manchester\\
M13 9PL\\
UK.}
\email{christopher.frei@manchester.ac.uk}
\urladdr{https://personalpages.manchester.ac.uk/staff/christopher.frei}

\author{\sc Daniel Loughran}
\address{Daniel Loughran \\
School of Mathematics\\
University of Manchester\\
Oxford Road\\
Manchester\\
M13 9PL\\
UK.}
\email{daniel.loughran@manchester.ac.uk}
\urladdr{https://sites.google.com/site/danielloughran}

\author{\sc Rachel Newton}
\address{Rachel Newton\\
Department of Mathematics and Statistics\\ 
University of Reading\\
Whiteknights\\
PO Box 220\\
Reading RG6 6AX\\ 
UK.}
   \email{r.d.newton@reading.ac.uk}
\urladdr{https://racheldominica.wordpress.com/}

\subjclass[2010]
{11R37 (primary), 
11R45, 
43A70, 
14G05, 
20G30. 
(secondary)}

\begin{abstract}
	We study the distribution of abelian extensions of bounded discriminant of a number field $k$
	which fail the Hasse norm principle. For example, we classify those finite abelian groups $G$ for which a positive proportion 
	of \mbox{$G$-extensions} of $k$ fail the Hasse norm principle.
	We obtain a similar classification for the failure of weak approximation for the associated norm one tori. These results involve counting abelian extensions of bounded discriminant
	with infinitely many local conditions imposed, which we achieve using tools from harmonic analysis, building on
	work of Wright.

\end{abstract}

\maketitle

\thispagestyle{empty}

\tableofcontents

\section{Introduction} 
\subsection{The Hasse norm principle}
Let $K/k$ be an extension of number fields and denote by $N_{K/k}: \Adele^*_K \to \Adele^*_k$ the norm
map from the ideles of $K$ to the ideles of $k$. We say that $K/k$ satisfies the \emph{Hasse norm principle}
if $N_{K/k} K^* = N_{K/k} \Adele_K^* \cap k^*$. This property takes its name from a theorem of Hasse,
which states that cyclic extensions always satisfy the Hasse norm principle.
In this paper, we fix a finite abelian group $G$ and ask how many Galois extensions of $k$ with Galois group $G$ satisfy the Hasse norm principle. This idea of studying Hasse principles in families
has attracted a lot of attention recently in other settings; see for example \cite{Bha14c}, \cite{BB14a}, \cite{BB14b}, \cite{BN15}.

Let us begin by considering the case of biquadratic extensions, 
which is the simplest case where the Hasse norm principle can fail.
A biquadratic extension fails the Hasse norm principle
if and only if all its decomposition groups are cyclic. This condition may be viewed in an entirely 
elementary way over $\QQ$, as we now explain. 
Let $a,b \in \ZZ$  and assume for simplicity that $a,b$ are coprime, square-free, that $a,b \neq 1$
and that $a,b \equiv 1 \bmod 4$.
Then the biquadratic extension $\QQ(\sqrt{a}, \sqrt{b})/\QQ$ fails the Hasse norm principle
if and only if
\begin{equation} \label{eqn:biquadratic}
\legendre{a}{p} = 1 \,\, \text{ for all } p \mid b \quad \text{ and } \quad 
\legendre{b}{p} = 1 \,\, \text{ for all } p \mid a,
\end{equation}
where $\legendre{\cdot}{\cdot}$ denotes the Legendre symbol.
A simple sieving argument shows that, for fixed $a$ as above, $0\%$ of 
all such $b$ satisfy \eqref{eqn:biquadratic}. In particular, it leads one to guess that $0\%$
of all biquadratic extensions, when ordered by discriminant, fail the Hasse norm principle.
Our first result shows that this is indeed the case, and moreover applies to a slightly larger
class of abelian extensions.

In order to state this result, we shall need some definitions. Let $G$ be a finite group and let $k$ be a number field with a 
fixed choice of algebraic closure $\bar{k}$. By a \emph{$G$-extension} of $k$, we mean a surjective continuous homomorphism $\varphi: \Gal(\bar{k}/k)\to G$. Clearly, a $G$-extension is the same as a pair $(K/k, \psi)$, where $K/k$ is a Galois extension contained in $\bar{k}$ and $\psi : \Gal(K/k)\to G$ is an isomorphism. We write $K_\varphi/k$ for the extension corresponding to $\varphi$ and $\Delta(\varphi)$ for the absolute norm of the relative discriminant ideal of $K_\varphi$ over $k$. We say that $\varphi$ satisfies the Hasse norm principle if $K_\varphi$ does. We denote by $\gextk$ the set of all $G$-extensions of $k$. Our first theorem is then the following.

\begin{theorem} \label{thm:HNP_0}
	Let $n,r \in \ZZ$ with $n>1$ and $r \geq 0$. Let $Q$ be the smallest prime
	dividing $n$ and let $G = \ZZ/n\ZZ \oplus (\ZZ/Q\ZZ)^r$.
	Then $0\%$ of $G$-extensions of $k$ fail the Hasse norm principle.
\end{theorem}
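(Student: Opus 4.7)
The plan is to use Tate's classical formula for the obstruction to the Hasse norm principle for abelian extensions. For a $G$-extension $\varphi$ with $K = K_\varphi$, one has
$$\frac{k^\times \cap N_{K/k} \Adele_K^\times}{N_{K/k} K^\times} \;\cong\; \coker\biggl(\bigoplus_v \exterior G_v \longrightarrow \exterior G\biggr),$$
where $v$ runs over places of $k$, $G_v \subseteq G$ is the decomposition group, and the arrow is the sum of the maps induced by the inclusions $G_v \hookrightarrow G$. Thus $\varphi$ fails the Hasse norm principle precisely when there exists a nonzero $\alpha \in \exterior G$ that lies outside the image of every $\exterior G_v$. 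A direct computation using $\exterior(\ZZ/n\ZZ) = 0$ gives $\exterior G \cong (\ZZ/Q\ZZ)^{\binom{r+1}{2}}$. When $r=0$ the group $G$ is cyclic and Hasse's original theorem makes the statement vacuous, so I would assume $r \geq 1$ throughout.

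By a union bound over the finite set $\exterior G \setminus \{0\}$, it suffices to show that for each nonzero $\alpha$ the number of $\varphi \in \gextk$ with $\Delta(\varphi) \leq X$ for which no $\exterior G_v$ contains $\alpha$ is of strictly smaller order than the total count of $G$-extensions of discriminant at most $X$. At every unramified place $v$ the group $G_v$ is cyclic, so $\exterior G_v = 0$ and the constraint is automatic there; the condition thus reduces to forbidding, at every (ramified) prime of $k$, decomposition groups $H \subseteq G$ with $\alpha \in \exterior H$. This is a single condition imposed at infinitely many primes simultaneously.

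I would handle the resulting counting problem using the extension of Wright's theorem to counts with infinitely many local conditions developed elsewhere in this paper. Via class field theory and harmonic analysis on an idele class group, the count of $\varphi$ of bounded discriminant with prescribed local behaviour is encoded in a Dirichlet series whose local factor at $v$ enumerates the local $G$-extensions of $k_v$ weighted by local conductor. Deleting, at every $v$, those local contributions coming from subgroups $H$ with $\alpha \in \exterior H$ strictly shrinks each affected local factor, and a Tauberian argument converts the resulting drop in the rightmost singularity of the series into a strictly smaller order of growth, yielding density zero.

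The main obstacle is to verify that this shrinkage strictly lowers the leading \emph{order} of the series rather than merely its leading constant. This is where the hypothesis that $Q$ is the smallest prime dividing $n$ is essential: Wright's asymptotic for $G$-extensions is driven by decomposition groups of minimal conductor, and one must show that for each nonzero $\alpha$ the forbidden subgroups $\{H : \alpha \in \exterior H\}$ include at least one of this minimal-conductor type at a positive density of primes. This requires writing $\alpha$ in terms of basis wedges $e_i \wedge e_j$ and splitting into cases according to whether the pair involves the $\ZZ/n\ZZ$ factor, while managing both tame ramification at primes coprime to $|G|$ and wild ramification at primes above $Q$; the interplay of these, together with the uniformity needed in the local-condition counting result, is the chief technical difficulty.
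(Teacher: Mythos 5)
Your overall strategy -- translate the failure of the Hasse norm principle into a statement about $\exterior G$ via Tate's theorem (Theorem~\ref{thm:Tate} together with Lemma~\ref{lem:exterior}), reduce to a union bound, and then apply the counting-with-local-conditions machinery -- is the same one the paper uses. The computation $\exterior G\cong(\ZZ/Q\ZZ)^{\binom{r+1}{2}}$ is correct, and the implication ``$\varphi$ fails HNP $\Rightarrow$ some nonzero $\alpha\in\exterior G$ lies in no $\mathrm{im}(\exterior D_v)$'' is valid (though not an \emph{iff}, as you assert: $\alpha$ could miss every individual image while still lying in their sum). Since only the forward implication feeds the union bound, that slip is harmless.

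The substantive difference, and where the proposal has a genuine gap, is in the choice of union bound and what it forces you to prove. You decompose over all nonzero $\alpha\in\exterior G$ and then need, for each such $\alpha$, a \emph{density-zero} result for $\{\varphi : \alpha\notin\mathrm{im}(\exterior D_v)\ \forall v\}$. This is a bespoke statement that does not follow from the counting theorems actually proved in this paper. Theorem~\ref{thm:conditions} produces a \emph{positive proportion} under hypothesis~\eqref{eqn:conditions}, not density zero; and Theorem~\ref{thm:bicyclic}, the only density-zero result available, is specifically about the condition $D_v\neq A$ for a single fixed $A\cong(\ZZ/Q\ZZ)^2$. Your per-$\alpha$ condition forbids the entire family $\{H\subseteq G : \alpha\in\mathrm{im}(\exterior H)\}$, which contains such an $A$ only when some $(\ZZ/Q\ZZ)^2$-subgroup detects $\alpha$. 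When it does, you can hand things to Theorem~\ref{thm:bicyclic}; when it does not (which happens, e.g.\ when $Q^2\mid n$, since then $\exterior(G[Q])\to\exterior G$ fails to be surjective), you would have to rerun the Fourier-analytic argument from scratch for your new collection of forbidden decomposition groups. You flag exactly this as ``the chief technical difficulty'' without resolving it. It is not a detail: the uniform lower bound on the local saving at primes $q_v\equiv1\bmod Q$, analogous to the quantity $\delta_H$ in Lemma~\ref{lem:local_bicyclic}, must be established for each $\alpha$, and the tame/wild bookkeeping you mention is the entire content of that lemma.

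The paper avoids this by first proving Lemma~\ref{lem:HNP_Q} (via Lemma~\ref{lem:G_i}): it replaces the union over $\alpha$ with a union over a \emph{chosen finite collection} of bicyclic subgroups $G_i\cong(\ZZ/Q\ZZ)^2$ such that $\bigoplus_i\exterior G_i\to\exterior G$ is surjective, so that ``HNP fails $\Rightarrow$ some $G_i$ never appears as a decomposition group''. Each of the finitely many events $\{D_v\neq G_i\ \forall v\}$ is then handled directly by Theorem~\ref{thm:bicyclic}. In short: same skeleton, but the paper's intermediate combinatorial lemma is designed precisely so that the final analytic step is a citation of an already-proved theorem, whereas your decomposition forces you to reprove a variant of that theorem per $\alpha$, and that work is missing. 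Two smaller remarks: (i) you write ``decomposition groups of minimal conductor'', but conductors at a place are governed by inertia, not decomposition groups -- the distinction matters precisely because bicyclic decomposition groups $\ZZ/Q\ZZ\oplus\ZZ/m\ZZ$ of arbitrarily large order $Qm$ all appear with the same minimal local conductor; and (ii) the $r=0$ case does not need to be excluded since a cyclic $G$ has $\exterior G=0$ and the statement is then genuinely vacuous.
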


Here, given a finite abelian group $G$, a number field $k$ and a property $P$, we say that $0\%$ of $G$-extensions
of $k$ satisfy $P$ if
$$\limsup_{B \to \infty} \frac{\#\{ \varphi\in\gextk: \, \Delta(\varphi) \leq B, \, \varphi
 \text{ satisfies } P\}}
{N(k,G,B)} = 0,$$
where
$$N(k,G,B) := \#\{ \varphi \in \gextk :\ \Delta(\varphi) \leq B \}$$
is the function which counts $G$-extensions of $k$ of discriminant at most $B$.

Our next result shows that Theorem \ref{thm:HNP_0} is non-trivial, namely that the set
of $G$-extensions which fail the Hasse norm principle is non-empty.

\begin{theorem} \label{thm:abelian}
	Let $G$ be a finite abelian group.
	Then there exists a $G$-extension of $k$ which fails the Hasse norm principle if and only if $G$ is non-cyclic.
\end{theorem}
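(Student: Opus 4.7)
The "only if" direction is the classical theorem of Hasse: every cyclic extension of number fields satisfies the Hasse norm principle. For the converse, let $G$ be a non-cyclic finite abelian group. My plan is to construct a $G$-extension $K/k$ whose decomposition groups are all cyclic, and then invoke Tate's cohomological formula for the HNP obstruction in the abelian case to conclude that HNP fails for $K/k$.

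Write $G = \ZZ/n_1\ZZ \oplus \cdots \oplus \ZZ/n_r\ZZ$ in invariant factor form, so $r \geq 2$ by non-cyclicity. Pick pairwise distinct finite places $v_1,\ldots,v_r$ of $k$ such that a ramified cyclic local extension of $k_{v_i}$ of degree $n_i$ exists at each $v_i$. Using the Grunwald--Wang theorem, for each $i$ I construct a cyclic degree-$n_i$ extension $K_i/k$ satisfying: (i) $K_i$ realises the prescribed ramified local extension at $v_i$; (ii) $K_i$ splits completely at every $v_j$ for $j \neq i$; and (iii) the $K_i$ are pairwise linearly disjoint over $k$ (which can always be enforced by further auxiliary local conditions). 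Then $K := K_1 \cdots K_r$ is a $G$-extension of $k$, and at every place $v$ its decomposition group $G_v$ is cyclic. Indeed, at an unramified place (i.e.\ $v \notin \{v_1,\ldots,v_r\}$), $G_v$ is generated by Frobenius; at $v = v_i$, conditions (i) and (ii) force the inertia subgroup and any Frobenius lift of $\Gal(K/k)$ to project trivially onto $\Gal(K_j/k)$ for $j \neq i$, so $G_{v_i}$ embeds into the cyclic group $\Gal(K_i/k)$. Archimedean places contribute at most $\ZZ/2\ZZ$, hence are also cyclic.

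To conclude, I appeal to Tate's classical cohomological description of the abelian HNP obstruction, which identifies it with the kernel of the restriction map $H^3(G,\ZZ) \to \bigoplus_v H^3(G_v,\ZZ)$. Since $H^3$ vanishes on cyclic groups, and $H^3(G,\ZZ) \cong \wedge^2 G$ is non-zero for every non-cyclic finite abelian $G$, this kernel is all of $\wedge^2 G \neq 0$ for the $K/k$ just constructed, so HNP fails. The main technical obstacle in this plan is the simultaneous application of Grunwald--Wang with several prescribed local conditions and the management of its well-known special case (arising when some $n_i$ is divisible by $8$); this is handled in practice by judicious choice of the places $v_i$ together with suitable additional auxiliary splitting conditions.
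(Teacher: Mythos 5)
Your overall strategy --- produce a $G$-extension with all decomposition groups cyclic, then invoke Tate's theorem together with $\mathrm{H}^3(G,\ZZ)\cong\Hom(\exterior G,\QQ/\ZZ)$ --- is the same as the paper's. The paper, however, obtains the required extension by simply citing Shafarevich's construction (Proposition \ref{prop:solvable}, via Sonn), which works for every finite solvable $G$; Theorem \ref{thm:abelian} then follows from Theorem \ref{thm:solvable} and Lemma \ref{lem:exterior_calc}. You instead try to build the extension by hand in the abelian case, as a compositum $K=K_1\cdots K_r$ of cyclic fields obtained from Grunwald--Wang.

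There is a genuine gap in that construction, located in the parenthetical ``an unramified place (i.e.\ $v\notin\{v_1,\ldots,v_r\}$)''. Grunwald--Wang prescribes the local behaviour of each $K_i$ only at the finitely many places you feed it; it gives no control over where else $K_i$ ramifies. If some $K_i$ ramifies at a place $w\notin\{v_1,\ldots,v_r\}$ and some $K_j$ ($j\ne i$) is inert (unramified but not split) at $w$, then $D_w(K)$ need not be cyclic even though $I_w(K)$ is. Concretely, in $K=\QQ(\sqrt2,\sqrt3)$ at $p=3$ one has $I_3=\Gal(K/\QQ(\sqrt2))\cong\ZZ/2\ZZ$ but $D_3\cong(\ZZ/2\ZZ)^2$, since $\QQ(\sqrt2)$ is inert at $3$. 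So your conditions (i)--(iii) only force cyclic decomposition at the $v_i$ and at the genuinely unramified places, while Tate's criterion needs cyclicity at \emph{every} place. Repairing this requires more than plain Grunwald--Wang: e.g.\ an inductive construction in which each new $K_i$ is made to split completely at every place where the previously built fields ramify \emph{and} is forced to ramify only at places where those earlier fields split, or a strengthened Grunwald--Wang controlling the full ramification locus. (A smaller issue: pairwise linear disjointness is not by itself enough to conclude $\Gal(K/k)\cong\prod_i\Gal(K_i/k)$; one needs $K_i\cap\prod_{j\ne i}K_j=k$ for each $i$. In fact your conditions (i)--(ii) already give this, since (ii) forces $K_i\cap\prod_{j\ne i}K_j$ to split at $v_i$, while (i) makes $D_{v_i}(K_i)=\Gal(K_i/k)$ so that no nontrivial subextension of $K_i$ splits at $v_i$; but the hypothesis should be stated correctly.)
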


This result was previously known only in some very special cases. For example, when $k=\QQ$ and $G$ is an elementary abelian $p$-group of even rank, Horie has shown the existence of infinitely many $G$-extensions of $\QQ$ that fail the Hasse norm principle \cite[Thm.~3]{Horie1993}.

We prove Theorem \ref{thm:abelian} using Shafarevich's resolution of the inverse
Galois problem for solvable groups, together with some cohomological calculations.
In particular, this also allows us to obtain the following
extension to solvable groups.

\begin{theorem} \label{thm:solvable}
	Let $G$ be a finite solvable group. Then there exists
	a $G$-extension of $k$ which fails the Hasse norm principle
	if and only if 
	$$\mathrm{H}^3(G,\ZZ) \neq 0.$$ 	
\end{theorem}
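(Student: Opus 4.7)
The plan is to translate the Hasse norm principle obstruction into a cohomological condition via a theorem of Tate, and then to construct a $G$-extension realising a non-trivial obstruction using Shafarevich's solution of the inverse Galois problem for solvable groups.

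The cohomological input is Tate's classical formula: for any Galois extension $K/k$ of number fields with group $G$, the knot group
\[
\mathfrak{K}(K/k) := \frac{k^{*} \cap N_{K/k}\Adele_K^{*}}{N_{K/k}K^{*}}
\]
measuring the failure of the Hasse norm principle satisfies
\[
\mathfrak{K}(K/k)^{\vee} \;\cong\; \ker\Bigl(H^{3}(G,\ZZ) \to \prod_{v} H^{3}(G_{v},\ZZ)\Bigr),
\]
where $G_{v}$ denotes the decomposition subgroup at $v$ and ${}^{\vee}$ is Pontryagin duality; this follows from Poitou--Tate duality applied to the norm one torus $R^{1}_{K/k}\Gm$, whose character module $\ZZ[G]/\ZZ$ has $H^{2}$ naturally isomorphic to $H^{3}(G,\ZZ)$. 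The ``only if'' direction is then immediate: any $G$-extension failing the Hasse norm principle has non-zero knot group, forcing $H^{3}(G,\ZZ) \neq 0$.

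Conversely, assume $H^{3}(G,\ZZ) \neq 0$. Since odd-degree integer cohomology of a cyclic group vanishes, it suffices to build a single $G$-extension $K/k$ in which every decomposition group $G_{v}$ is cyclic, for then the restriction map above is zero and the knot group is Pontryagin-dual to $H^{3}(G,\ZZ) \neq 0$. Archimedean places and finite unramified places already have cyclic decomposition groups, so the task reduces to controlling the finitely many ramified primes. For each such prospective prime one selects a cyclic subgroup $H_{v} \leq G$ together with a cyclic local extension of $k_{v}$ realising $H_{v}$, and then appeals to Shafarevich's theorem in the refined form that solves solvable embedding problems with prescribed local data (in the spirit of Neukirch) to obtain a global $G$-extension of $k$ inducing these local extensions. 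The main obstacle I anticipate is precisely this last step: threading Shafarevich's inductive construction along the derived series of $G$ in such a way that the prescribed cyclic local behaviour is preserved at every stage. Once this is achieved, Tate's formula supplies a non-trivial knot group, and hence a $G$-extension of $k$ which fails the Hasse norm principle.
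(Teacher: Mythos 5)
Your proposal is correct and follows essentially the same route as the paper: reduce the Hasse norm principle to Tate's cohomological criterion (Theorem~\ref{thm:Tate} in the text), note that $\mathrm{H}^3$ of any cyclic group vanishes, and then produce a $G$-extension with all decomposition groups cyclic via Shafarevich's solution of the inverse Galois problem for solvable groups. The ``only if'' direction is handled identically.

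The one place where your framing is slightly weaker than what is actually needed is the construction step. You phrase it as a Grunwald--Wang-type problem: prescribe cyclic local behaviour at finitely many places, then appeal to a version of Shafarevich that realises this prescribed local data. But a statement of that shape does not by itself control the decomposition groups at the places of ``unplanned'' ramification that the construction may introduce, and it is exactly at those places that a non-cyclic $D_v$ could re-enter and kill the knot group. What the paper actually invokes (as Proposition~\ref{prop:solvable}, citing Sonn) is the stronger fact that Shafarevich's inductive construction along a filtration of $G$ can be arranged so that the resulting $G$-extension has cyclic decomposition group at \emph{every} place of $k$, not merely at a finite set fixed in advance. You correctly flag the difficulty of ``threading'' the construction, and that threading is precisely what the cited result supplies; but as written your plan quietly substitutes a local-realisation statement for this global-cyclicity statement, and the two are not interchangeable without further argument.
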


For a finite abelian group $G$, we have $\mathrm{H}^3(G,\ZZ) =0$ if and only if $G$ is cyclic (see, for example, Lemmas \ref{lem:exterior} and \ref{lem:exterior_calc}). In contrast, there are many finite non-abelian groups $G$ with $\mathrm{H}^3(G,\ZZ) = 0$; for example, groups for which every Sylow subgroup  is either cyclic or a generalised quaternion group (see \cite[Ch.~VI.9]{Bro82}, in particular Theorem 9.5 and Exercise 4). This observation allows us to obtain counter-examples to the statement of \cite[Cor.~3.2]{Gurak}, showing that the forward implication given there is false in general. For example, the quaternion group $\mathcal{Q}$ has $\mathrm{H}^3(\mathcal{Q},\ZZ) = 0$, yet is non-cyclic.

Returning to quantitative results, it turns out that there is an extreme dichotomy:~for all finite abelian non-cyclic groups $G$ other than those occurring in Theorem \ref{thm:HNP_0},
the Hasse norm principle fails for a positive proportion of $G$-extensions.

\begin{theorem} \label{thm:fail_HNP}
	Let $G$ be a non-trivial finite abelian group and let $Q$ be the smallest prime dividing $|G|$. 
	Assume that $G$ is not isomorphic to a group of the form
	$\ZZ/n\ZZ \oplus (\ZZ/Q\ZZ)^r$ for any $n$ divisible by $Q$ and $r \geq 0$.
	Then a positive proportion of $G$-extensions of $k$ fail the Hasse norm principle.
\end{theorem}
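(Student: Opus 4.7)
The strategy combines the cohomological criterion for failure of the HNP in abelian extensions with the counting framework for $G$-extensions subject to infinitely many local conditions (following Wright) that underpins the proof of Theorem~\ref{thm:HNP_0}. For abelian $G$, a theorem of Tate identifies failure of the HNP for $K_\varphi/k$ with non-surjectivity of the natural map $\bigoplus_v \exterior G_v \to \exterior G$, where $G_v \leq G$ is the decomposition subgroup at $v$; equivalently, dualising, with the existence of a non-zero alternating pairing $\beta\colon G\times G\to \QQ/\ZZ$ such that $\beta|_{G_v\times G_v}=0$ for every $v$. Since cyclic groups are automatically $\beta$-isotropic, only ramified primes impose non-trivial constraints, which is the feature I aim to exploit.

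The first step is to extract from the hypothesis on $G$ a suitable non-zero alternating pairing $\beta$. Writing $G$ in invariant-factor form $\ZZ/n_1\ZZ \oplus \cdots \oplus \ZZ/n_r\ZZ$, the assumption that $G \not\cong \ZZ/n\ZZ \oplus (\ZZ/Q\ZZ)^s$ is equivalent to $n_{r-1}>Q$, and this extra room is exactly what lets one pick a $\beta$ whose family of isotropic subgroups of $G$ is rich: the key requirement is that at each completion $k_v$, the proportion of local abelian extensions whose image in $G$ is $\beta$-isotropic is bounded below by $1-O(q_v^{-1})$. This is the point where the group-theoretic assumption is genuinely used, and its verification should reduce to an exterior-algebra computation resting ultimately on Lemma~\ref{lem:exterior_calc} and standard structure theorems for wedge squares of finite abelian groups. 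By contrast, no such $\beta$ exists for the groups of Theorem~\ref{thm:HNP_0}, which explains the dichotomy.

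Having fixed $\beta$, the second step counts those $\varphi\in\gextk$ with $\Delta(\varphi)\leq B$ for which $G_v$ is $\beta$-isotropic at every place $v$ of $k$. This is an infinite family of local conditions, precisely the setting of the harmonic-analytic counting tool developed in the paper. One obtains an asymptotic of the form $c\cdot N(k,G,B)$ for an Euler product $c=\prod_v c_v$, with $c_v>0$ the local density at $v$; absolute convergence of the product is guaranteed by $c_v=1-O(q_v^{-1})$ at almost every prime. I expect the main obstacle to lie in this final counting step: one must apply the Wright-type asymptotic with sufficient uniformity in the infinite product of local conditions, and simultaneously verify strict positivity of each $c_v$ (including at the finitely many ``bad'' primes, e.g.\ those above the rational primes dividing $|G|$ or the archimedean places), so that the positive Euler product genuinely propagates to a positive proportion of failing $G$-extensions in the limit $B\to\infty$.
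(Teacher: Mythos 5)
Your overall strategy is the Pontryagin dual of the paper's: the paper works directly with the nonsurjectivity of $\bigoplus_v \exterior(\psi(D_v))\to\exterior G$, while you encode the same failure via a single alternating form $\beta\in\QZdual{(\exterior G)}$ annihilating every $\psi(D_v)$. The two are equivalent, so the choice of framework is harmless. However, as written your proposal has a genuine gap in how the local condition is chosen and justified.

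The decisive issue is your characterisation of the ``key requirement'' as a density bound of the shape $1-O(q_v^{-1})$. That is not the right criterion, and the paper's Theorem~\ref{thm:bicyclic} shows precisely why: if the local condition merely excludes a single bicyclic decomposition-group option $A\cong(\ZZ/Q\ZZ)^2$ at almost all places, the local densities are still $1-O(q_v^{-1})$, yet the global proportion is $0$, because the removed sub-$G$-extensions contribute positively to the leading coefficient of $q_v^{-\alpha(G)s}$ in the Euler factor, and after dividing by the Dedekind factor $\zeta_{k_0,v}(1)$ the resulting product tends to zero. What is actually needed is hypothesis~\eqref{eqn:conditions} of Theorem~\ref{thm:conditions}: $\Lambda_v$ must contain \emph{every} sub-$G$-extension whose inertia group has order dividing $Q$. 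For a generic $\beta$ — e.g.\ one nonzero on some $(\ZZ/Q\ZZ)^2\subseteq G$ — this containment fails, and your count would land exactly in the zero-density regime of Theorem~\ref{thm:bicyclic}.

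The fix is the exterior-algebra computation you gesture at but do not pin down: one must choose $\beta$ to vanish on the subgroup of $\exterior G$ consisting of elements of order dividing $Q$. Such a nonzero $\beta$ exists if and only if the exponent of $\exterior G$ exceeds $Q$, which by Lemma~\ref{lem:special_form} is equivalent to the hypothesis on $G$. With this $\beta$ fixed, any decomposition group $D_v$ with inertia of order dividing $Q$ is either cyclic or isomorphic to $\ZZ/Q\ZZ\oplus H$ with $H$ cyclic, so by Lemma~\ref{lem:exterior_calc} one has $\exterior D_v$ of order dividing $Q$; its image in $\exterior G$ therefore lies in $\ker\beta$, and $D_v$ is automatically $\beta$-isotropic. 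That is the content of Lemma~\ref{lem:fail_HNP}, and it is exactly this observation — not a density estimate — that makes hypothesis~\eqref{eqn:conditions} of Theorem~\ref{thm:conditions} hold for $\Lambda_v = \{\varphi_v : \psi_v(D_v)\text{ is }\beta\text{-isotropic}\}$. One further ingredient you omit: Theorem~\ref{thm:conditions} requires the existence of at least one global $G$-extension realising all the local conditions, which the paper supplies via Proposition~\ref{prop:solvable} (Shafarevich's construction of a $G$-extension with every decomposition group cyclic). With these two corrections your argument would go through and would in fact be essentially the proof in the paper, just phrased dually.
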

Here, given a finite abelian group $G$, a number field $k$ and a property $P$, we say that a positive proportion of $G$-extensions
of $k$ satisfy $P$ if
$$\liminf_{B \to \infty} \frac{\#\{\varphi\in\gextk: \, \Delta(\varphi) \leq B, \, \varphi \text{ satisfies } P\}}
{N(k,G,B)} > 0.$$

Note the distinction between our work and the work of Browning and Newton in \cite{BN15}.
There the authors fix an extension $K/\QQ$ and count those rational numbers of bounded
height that are norms everywhere locally but not globally, whereas in our work it is the extensions
themselves that are being counted.

\subsection{Norm one tori}
Our results on the Hasse norm principle may be interpreted via the arithmetic
of norm one tori. Given a finite field extension $K/k$, denote by
$$\Res_{K/k}^1 \Gm := \ker ( \Res_{K/k} \Gm \to \Gm )$$
the associated norm one torus. The failure of the Hasse norm principle for $K/k$ is completely controlled
by the Tate-Shafarevich group $\Sha(\Res_{K/k}^1 \Gm)$, in the sense that $K/k$ fails the Hasse norm
principle if and only if $\Sha(\Res_{K/k}^1 \Gm) \neq 0$ (see Section \ref{sec:tori}).
In particular, for $G$ as in Theorem \ref{thm:fail_HNP}, a positive proportion of the tori $\Res_{K/k}^1 \Gm$
have non-trivial Tate-Shafarevich group. Note that non-isomorphic Galois extensions give rise to non-isomorphic
norm one tori (see Proposition \ref{prop:tori}),
hence it makes sense to count such tori by the discriminant of the associated extension.

From the viewpoint of norm one tori, another interesting problem is to study the failure of
weak approximation for the tori $\Res_{K_\varphi/k}^1 \Gm$ as $\varphi$ varies over all \mbox{$G$-extensions} of $k$.
Recall that a variety $X$ over a number field $k$ is said to satisfy 
weak approximation if $X(k)$ is dense in $\prod_v X(k_v)$ for the product topology.
Here we have the following simple group-theoretic criterion, which gives a complete answer to this problem.

\begin{theorem} \label{thm:WA}
	Let $G$ be a non-trivial finite abelian group and let $Q$ be the smallest prime dividing $|G|$.
	Then as $\varphi$ varies over all $G$-extensions of $k$, the following hold.
	\begin{enumerate}
		\item If the $Q$-Sylow subgroup of $G$ is cyclic, 
		then a positive proportion of the tori $\Res_{K_\varphi/k}^1 \Gm$ satisfy weak approximation.
		\item If the $Q$-Sylow subgroup of $G$ is not cyclic, 
		then $0\%$ of the tori $\Res_{K_\varphi/k}^1 \Gm$ satisfy weak approximation.		
	\end{enumerate}
\end{theorem}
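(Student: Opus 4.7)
The plan is to translate weak approximation for $T_\varphi = \Res_{K_\varphi/k}^1 \Gm$ into a condition on the decomposition groups of $\varphi$, then apply the counting-with-local-conditions machinery developed earlier in the paper. The cohomological starting point is Voskresenskii's exact sequence
\[
	0 \to A(T_\varphi) \to \mathrm{H}^1(G, \Pic(\bar X))^\vee \to \Sha(T_\varphi) \to 0
\]
for a smooth compactification $X$ of $T_\varphi$, combined with the Colliot-Th\'el\`ene--Sansuc identification $\mathrm{H}^1(G, \Pic(\bar X)) \cong \mathrm{H}^3(G, \ZZ)$ for norm one tori of abelian extensions. Poitou--Tate duality gives $\Sha(T_\varphi)^\vee \cong \Sha^2(G, \hat T)$, and the dimension shift from $0 \to \ZZ \to \ZZ[G] \to \hat T \to 0$ identifies this with $\ker(\mathrm{H}^3(G, \ZZ) \to \prod_v \mathrm{H}^3(G_v(\varphi), \ZZ))$. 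Since $\mathrm{H}^3(C, \ZZ) = 0$ for every cyclic group $C$ and decomposition groups at unramified and archimedean places are cyclic, one concludes that weak approximation for $T_\varphi$ holds if and only if the restriction $\mathrm{H}^3(G, \ZZ) \to \mathrm{H}^3(G_v(\varphi), \ZZ)$ vanishes at every ramified finite place $v$.

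By Lemmas~\ref{lem:exterior} and~\ref{lem:exterior_calc}, $\mathrm{H}^3(G, \ZZ) \cong (G \wedge G)^\vee$, and Sylow decomposition of the abelian group $G$ gives $G \wedge G \cong \bigoplus_\ell G_\ell \wedge G_\ell$. In particular the $Q$-primary part of the obstruction is $(G_Q \wedge G_Q)^\vee$, which vanishes precisely when $G_Q$ is cyclic; this is the source of the dichotomy in the theorem. The critical additional input is that a generic $G$-extension of bounded discriminant is ramified mainly at primes of residue characteristic $Q$ with cyclic inertia of order $Q$, which is the basis of Wright's asymptotic. In part~(1), when $G_Q$ is cyclic, one checks that a positive proportion of $\varphi$ have cyclic $Q$-part of $G_v(\varphi)$ at every ramified $v$ (automatic at primes of residue characteristic $Q$, and arranged by avoidance at primes of other residue characteristics where $G_\ell$ is non-cyclic), yielding WA by the criterion above; the counting theorem of the paper with these local conditions imposed delivers positive density. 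In part~(2), when $G_Q$ is non-cyclic, fix a non-trivial class $\alpha \in (G_Q \wedge G_Q)^\vee \hookrightarrow \mathrm{H}^3(G, \ZZ)$; for WA to hold, $\alpha$ must restrict to zero at $G_v(\varphi)$ for every ramified $v$. At each of the infinitely many rational primes $p$ of residue characteristic $Q$, this imposes a proper local condition of density strictly less than~$1$, and the infinite-local-conditions extension of Wright's counting theorem forces the joint density of WA-satisfying $\varphi$ to be zero.

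The main obstacle is the execution of part~(2): one must establish explicit local density bounds strictly less than~$1$ uniformly across primes of residue characteristic~$Q$, and verify that the sieve underlying the harmonic-analytic counting theorem really converges to zero when these bounds are combined. This runs in close parallel to the argument for Theorem~\ref{thm:fail_HNP}, but with the $\mathrm{H}^3$-obstruction to weak approximation replacing the $\Sha$-obstruction to the Hasse norm principle, and it is here that the extension of Wright's counting framework developed earlier in the paper is essential.
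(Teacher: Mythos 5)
Your derivation of the weak approximation criterion is correct: the combination of Voskresenski\u{\i}'s sequence (Theorem \ref{thm:Sansuc}) and Tate's theorem (Theorem \ref{thm:Tate}) shows $A(\Res_{K/k}^1\Gm)=0$ precisely when every restriction $\mathrm{H}^3(G,\ZZ)\to\mathrm{H}^3(D_v,\ZZ)$ vanishes. The paper repackages this more cleanly in Lemma \ref{lem:WA} as ``all $D_v$ are cyclic'' by observing that each restriction map is surjective (dually, $\exterior D_v\to\exterior G$ is injective for $D_v\subseteq G$), but your version is equivalent and the route to it is essentially the same.

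Where your proposal goes wrong is in the translation into counting. First, in part~(2) you locate the engine of the zero-density argument at ``the infinitely many rational primes $p$ of residue characteristic $Q$.'' There is exactly one rational prime of residue characteristic $Q$ (namely $Q$ itself), and only finitely many primes of $k$ above it; a finite set of local conditions never forces density zero. The positive-density family of ``obstructing'' places that the paper actually exploits (in the proof of Theorem \ref{thm:bicyclic}) is the set of $v$ that split completely in $k(\mu_Q)$, i.e.\ $q_v\equiv 1\bmod Q$, because it is exactly at such places that a tamely ramified $D_v$ can be isomorphic to a bicyclic subgroup $A\cong(\ZZ/Q\ZZ)^2$ of $G$. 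The density-zero conclusion is obtained by showing that disallowing $D_v=A$ at more and more such places makes an Euler product diverge to zero (Lemma \ref{lem:bicyclic_dominated_convergence_conditions}, Chebotarev), and this is a separate mechanism from the positive-density Theorem \ref{thm:conditions} that your sketch appeals to but which by design only ever gives positive proportions under hypothesis \eqref{eqn:conditions}.

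Second, in part~(1) the parenthetical ``automatic at primes of residue characteristic $Q$, and arranged by avoidance at primes of other residue characteristics where $G_\ell$ is non-cyclic'' mislocates the relevant phenomenon. What actually makes the hypothesis \eqref{eqn:conditions} of Theorem \ref{thm:conditions} satisfiable is that, when $G_Q$ is cyclic and $I_v$ has order dividing $Q$, the whole group $D_v$ is forced to be cyclic: the $Q$-part of $D_v$ embeds in the cyclic $G_Q$, and the prime-to-$Q$ part of $D_v$ injects into the cyclic $D_v/I_v$ because $|I_v|\mid Q$. This holds irrespective of the residue characteristic; it has nothing to do with primes above $Q$, and nothing needs to be ``arranged by avoidance'' at other primes once one restricts, as \eqref{eqn:conditions} does, to small inertia. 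You also still need the existence of one $G$-extension with all $D_v$ cyclic (Shafarevich's construction, Proposition \ref{prop:solvable}) as the seed for the positivity clause of Theorem \ref{thm:conditions}, which your sketch omits.
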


In particular, Theorem \ref{thm:WA} gives examples of $G$ for which $100\%$ of the 
tori $\Res_{K/k}^1 \Gm$ fail  weak approximation.
Results of this type were recently obtained in \cite{BBL15}, where the authors gave examples
of families of varieties for which $100\%$ of the members which are everywhere locally soluble
fail weak approximation.
Our set-up is slightly different to \cite{BBL15}, however, as there
the authors dealt with families of varieties ordered by height, where the family could contain many
isomorphic fibres, whereas here we only count each isomorphism
class over $k$ once. (To be more precise, we count each isomorphism class exactly $|\Aut(G)|$ times, but this is clearly equivalent.)

In \cite{BBL15} it was left open whether there exist families of varieties 
for which both a positive proportion of the everywhere locally soluble members satisfy weak approximation and a positive
proportion also fail weak approximation. 
We are able to give examples which satisfy this condition, at least in our setting.
These are obtained by combining Theorem \ref{thm:WA} with the following result.

\begin{theorem} \label{thm:satisfy_HNP}
	Let $G$ be a finite abelian non-cyclic group.
	Then as $\varphi$ varies over all $G$-extensions of $k$,
	the following hold.
	\begin{enumerate}
		\item A positive proportion of the $K_\varphi/k$ satisfy the Hasse norm principle.
		\item A positive proportion of the tori $\Res_{K_\varphi/k}^1 \Gm$ fail weak approximation.
	\end{enumerate}
\end{theorem}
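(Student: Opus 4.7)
The plan is to prove each of (1) and (2) by imposing finitely many local conditions that force the desired behaviour, and then to invoke the counting machinery developed in earlier sections to exhibit a positive proportion of $G$-extensions satisfying those conditions.

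For part (1), I would begin with Tate's cohomological description: for an abelian extension $K_\varphi/k$, the Hasse norm principle holds if and only if the natural map
\[
\bigoplus_v \wedge^2 G_v(\varphi) \longrightarrow \wedge^2 G
\]
is surjective, where $G_v(\varphi) \subseteq G$ denotes the decomposition subgroup at $v$ and each summand maps in via the inclusion $G_v(\varphi) \hookrightarrow G$ (together with the identification of $\hat H^{-3}$ with the exterior square for finite abelian groups, cf.~Lemmas \ref{lem:exterior} and \ref{lem:exterior_calc}). Since $G$ is non-cyclic abelian, $\wedge^2 G$ is a finite abelian group generated by a finite list of pure wedges $g^{(i)}_1 \wedge g^{(i)}_2$ with each $H_i := \langle g^{(i)}_1, g^{(i)}_2\rangle$ a non-cyclic subgroup of $G$. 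For each $i$, I would select an auxiliary prime $v_i$ of $k$ of residue characteristic coprime to $|G|$ and split in a chosen number field, and impose a tame local condition on $\varphi$ forcing $G_{v_i}(\varphi) \supseteq H_i$ (prescribing a cyclic subgroup of $H_i$ as inertia and a Frobenius class lifting a generator of the quotient). By local class field theory each such condition is realisable, and together they force the images of the $\wedge^2 G_{v_i}(\varphi)$ to generate $\wedge^2 G$. The main counting theorem (building on Wright) then shows that this finite list of local conditions is satisfied by a positive proportion of $G$-extensions.

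For part (2), I would split into cases according to the $Q$-Sylow subgroup. If the $Q$-Sylow of $G$ is non-cyclic, Theorem \ref{thm:WA}(2) already gives $0\%$ of the tori $\Res^1_{K_\varphi/k}\Gm$ satisfying weak approximation, so a positive proportion fail. If the $Q$-Sylow is cyclic but $G$ is non-cyclic, then $G$ has a non-cyclic $p$-Sylow for some prime $p > Q$, and I would use the analogous cohomological criterion (Voskresenski\u\i) for the failure of weak approximation on norm one tori: namely, the obstruction is controlled by a $\Sha^1_\omega$-type group built from $H^1$ of the decomposition groups against the character lattice of the torus. The goal is to impose a finite family of local conditions at $p$-primes, restricting decomposition groups there to controlled cyclic subgroups so as to force a chosen non-trivial cohomology class to survive in this obstruction group, and then to apply the counting machinery to conclude positive density.

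The main obstacle is part (2) in the case of cyclic $Q$-Sylow. Here the local conditions must be strong enough to force a non-trivial weak approximation obstruction — so they must preclude decomposition groups large enough to kill the chosen class — yet not so restrictive as to be incompatible with the positive density statements delivered by the counting theorems. Verifying this compatibility explicitly, by a density computation in the style of those used in the proofs of Theorems \ref{thm:fail_HNP} and \ref{thm:WA}, is where most of the technical effort will lie.
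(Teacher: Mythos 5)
Your part (1) is in the same spirit as the paper's: both reduce matters to Tate's criterion via the exterior square, cover $\wedge^2 G$ by wedge squares of bicyclic subgroups $H_i$ (cf.\ Lemma \ref{lem:G_i}), impose tame local conditions at auxiliary primes forcing $D_{v_i} \supseteq H_i$, and appeal to Theorem \ref{thm:conditions} to conclude positive density. One step is underspecified, however: you verify that each local condition is realisable by \emph{some} sub-$G$-extension of $k_{v_i}$, but Theorem \ref{thm:conditions} requires the existence of a \emph{global} $G$-extension hitting all prescribed local targets simultaneously. This is a genuine Grunwald--Wang issue, and it is precisely the point at which Wang's counterexample intervenes; the paper addresses it via Proposition \ref{prop:GW} by deliberately taking the $v_i$ away from the primes above $2$ (Lemma \ref{lem:bicyclic} with the residue characteristic coprime to $|G|$). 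Your hint ``split in a chosen number field'' gestures in this direction, but the $2$-adic obstruction needs to be named and excluded explicitly.

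For part (2), your route is both different from the paper's and incomplete, and it is worth seeing why the paper's is much shorter. You attempt a case split on the $Q$-Sylow and, in the cyclic-$Q$-Sylow case, try to directly impose conditions forcing weak approximation to fail — and you correctly identify that proving compatibility of these conditions with a positive-density count is where all the work lies. But the paper avoids this entirely: it observes that once one has constructed a $G$-extension with $\Sha(\Res^1_{K/k}\Gm) = 0$, Voskresenski\u{\i}'s exact sequence (Theorem \ref{thm:Sansuc})
\[
0 \to A(\Res_{K/k}^1 \Gm) \to \QZdual{\mathrm{H}^3(G, \ZZ)} \to \Sha(\Res_{K/k}^1 \Gm) \to 0
\]
together with $\mathrm{H}^3(G,\ZZ)\neq 0$ (Lemma \ref{lem:exterior_calc}, since $G$ is non-cyclic) forces $A(\Res^1_{K/k}\Gm)\neq 0$ automatically. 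So (1) and (2) are satisfied by the \emph{same} positive-density set of $G$-extensions, and there is no separate density estimate to prove for (2). You should notice this shortcut: it is the real content of the theorem, and it eliminates exactly the obstacle you flagged as ``where most of the technical effort will lie.''
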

Note that Theorem \ref{thm:satisfy_HNP}  implies the existence of some Galois extension $K/k$ with Galois group isomorphic to $G$
for which  $\Res_{K/k}^1 \Gm$ fails weak approximation;
something which is by no means obvious.

For any group as in Theorem \ref{thm:fail_HNP}, by Theorem \ref{thm:satisfy_HNP} we also obtain
examples of families of fields for which a positive proportion of the members satisfy the Hasse norm principle
and also a positive proportion fail the Hasse norm principle. This result should be compared with work of Bhargava 
\cite{Bha14c}, which shows that a positive proportion of cubic curves fail the Hasse principle, 
and a positive proportion also satisfy the Hasse principle.

\subsection{Counting with local conditions}
To prove our quantitative results, we require a theorem  concerning
counting abelian extensions with infinitely many local conditions imposed. 
This is Theorem \ref{thm:conditions} below, which is in some respects our main result. 
It states general conditions under
which one may obtain the existence of a  positive proportion of abelian extensions which satisfy infinitely many local conditions.

In order to state the result, we will need some more notation. 
Let $G$ be a finite abelian group, let $F$ be a field and $\bar{F}$ a separable closure of $F$. We define a \emph{sub-$G$-extension} of $F$ to be a continuous homomorphism $\Gal(\bar{F}/F)\to G$. A sub-$G$-extension corresponds to a pair $(L/F,\psi)$, where $L/F$ is a Galois extension inside $\bar{F}$ and $\psi$ is an injective homomorphism $\Gal(L/F)\to G$. 

For every place $v$ of the number field $k$, we fix an algebraic closure $\bar{k}_v$ and an embedding of $\bar{k}$ into $\bar{k}_v$. Hence, a sub-$G$-extension $\varphi$ of $k$ induces a sub-$G$-extension $\varphi_v$ of $k_v$ at every place $v$. For each place $v$ of $k$, let $\Lambda_v$ be a set of sub-$G$-extensions of $k_v$. For $\Lambda := (\Lambda_v)_v$ we study the function
\begin{equation} \label{def:counting_conditions}
N(k,G,\Lambda,B) := \#\left\{\varphi\in\gextk:\, \Delta(\varphi)\leq B, \, \varphi_v \in \Lambda_v \forall v \right\},
\end{equation}
which counts those $G$-extensions of $k$ of bounded discriminant which satisfy the local conditions imposed by $\Lambda$.

Let $Q$ be the smallest prime dividing $|G|$ and let $\phi_Q(G)$ be the number of elements of order $Q$ in $G$. Denote by $\mu_Q$ the group of $Q$th roots of unity in $\bar{k}$ and set
\begin{equation} \label{def:alpha}
	\alpha(G) := |G|(1 - Q^{-1}), \quad \nu(k,G) := \phi_Q(G)/[k(\mu_Q):k].
\end{equation}
Note that $\alpha(G), \nu(k,G)\in\NN$. Our main result is the following.

\begin{theorem} \label{thm:conditions} 
  	Let $G$ be a non-trivial finite abelian group and let $Q$ be the smallest prime dividing $|G|$. 
  	For each place $v$ of $k$, let $\Lambda_v$ be a set of sub-$G$-extensions of $k_v$ and let $\Lambda := (\Lambda_v)_v$.
  	Assume that for almost all places $v$,
        \begin{equation}\label{eqn:conditions}
\parbox{0.8\linewidth}{the set $\Lambda_v$ contains all sub-$G$-extensions $(K_v/k_v,\psi_v)$ for which the inertia group of $K_v/k_v$ has order dividing $Q$.}
        \end{equation}
  	Furthermore, suppose that there exists some $G$-extension $\varphi$ of $k$ such that $\varphi_v \in \Lambda_v$
  	for all $v$.
Then there is $\gamma>0$ and a real polynomial $P$ of degree $\nu(k,G)-1$ with positive leading coefficient, such that we have an asymptotic formula
\begin{equation*}
  N(k,G,\Lambda,B) = B^{1/\alpha(G)}P(\log B) + O(B^{1/\alpha(G)-\gamma}),\quad B\to\infty.
\end{equation*}
\end{theorem}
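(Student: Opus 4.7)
The strategy is to extend Wright's harmonic-analytic method for counting abelian extensions of bounded discriminant to accommodate the infinitely many local conditions imposed by $\Lambda$. The key observation to exploit is that the hypothesis \eqref{eqn:conditions} preserves precisely those sub-$G$-extensions that contribute to the main term of Wright's asymptotic, so the strength of the theorem lies in showing that imposing \emph{only finitely many} further restrictions does not alter the order of the pole of the relevant Dirichlet series.

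The first step is to translate the problem via class field theory: the set $\gextk$ is in bijection with the surjective continuous homomorphisms $C_k \to G$, where $C_k$ denotes the id\`ele class group of $k$. Under this identification the discriminant factors as a product of local contributions $\Delta(\varphi) = \prod_v \Delta_v(\varphi_v)$, and the local conditions decouple place by place. I would then introduce the Dirichlet series
\begin{equation*}
Z(\Lambda,s) \;=\; \sum_{\varphi \in \gextk,\; \varphi_v \in \Lambda_v\,\forall v} \Delta(\varphi)^{-s},
\end{equation*}
and use M\"obius inversion over subgroups of $G$ to remove the surjectivity constraint, thereby rewriting $Z(\Lambda,s)$ as a signed combination of sums over \emph{sub-}$G$-extensions with prescribed local conditions. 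These auxiliary sums are amenable to Fourier/Poisson analysis on $\widehat{C_k}$ in the spirit of Wright, which factors each of them as a product of local integrals $Z_v(\Lambda_v,s)$ times a global factor captured by Hecke $L$-functions for characters of $\Gal(k(\mu_Q)/k)$ acting on the $Q$-torsion of $G$.

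The analytic heart of the proof is a careful study of the local factors. At an unramified place $v$, I would compute $Z_v(\Lambda_v,s)$ explicitly and show it has the shape
\begin{equation*}
Z_v(\Lambda_v,s) \;=\; 1 \;+\; c_v\, q_v^{-\alpha(G)\, s} \;+\; O\!\bigl(q_v^{-(\alpha(G)+1)\,s}\bigr),
\end{equation*}
where $c_v$ counts certain order-$Q$ cyclic subgroups of $G$ compatible with the local Galois action. Here hypothesis \eqref{eqn:conditions} is essential: since $\Lambda_v$ contains every sub-$G$-extension with inertia of order dividing $Q$, the leading local singularity is unaffected by the local condition at almost all places. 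Comparing the resulting Euler product with an appropriate product of Hecke $L$-functions then yields a meromorphic continuation of $Z(\Lambda,s)$ slightly past the line $\re(s) = 1/\alpha(G)$, with a pole of order exactly $\nu(k,G)$ at $s = 1/\alpha(G)$ and a holomorphic remainder providing a power-saving contour shift.

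The main obstacle, and where the global non-emptyness hypothesis enters, is showing that the leading Laurent coefficient of $Z(\Lambda,s)$ at $s = 1/\alpha(G)$ is strictly positive (so that $P$ has positive leading coefficient and is not the zero polynomial). The unramified local factors are sums of non-negative contributions that are uniformly bounded away from zero for almost all $v$ by \eqref{eqn:conditions}; the difficulty is at the finitely many exceptional places, where $\Lambda_v$ may be very restrictive. Here I would show that the local factor $Z_v(\Lambda_v, 1/\alpha(G))$ is nonzero if and only if $\Lambda_v$ is non-empty in a suitably strong sense, and then invoke the hypothesised global $\varphi$ with $\varphi_v \in \Lambda_v$ for all $v$ to guarantee all the exceptional local factors are nonzero simultaneously. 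A standard Tauberian theorem of Landau--Delange type then converts the analytic information into the stated asymptotic $B^{1/\alpha(G)}P(\log B) + O(B^{1/\alpha(G)-\gamma})$.
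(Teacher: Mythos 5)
Your outline of the analytic set-up matches the paper closely: class field theory to pass to continuous homomorphisms of the id\`ele class group, M\"obius inversion over subgroups of $G$ to kill surjectivity, Poisson summation \`a la Wright, local Euler-factor estimates $1 + c_v q_v^{-\alpha(G)s} + O(q_v^{-(\alpha(G)+1)s})$ driven by hypothesis \eqref{eqn:conditions}, meromorphic continuation via comparison with Dedekind zeta functions, and a Tauberian theorem to finish. One small imprecision worth flagging: after Poisson summation the conductor series does not split as ``a product of local integrals times a global factor''; it is an \emph{infinite sum} over $x\in k^*\otimes\dual{H}$, each term of which is an Euler product (cf.~\eqref{eq:poisson_result} and \eqref{eqn:constant}). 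Controlling that infinite sum uniformly in $x$ is one of the genuine technical advances over Wright, and your write-up glides over it, but the proof is recoverable from what you say.

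The genuine gap is in the positivity argument, and it is not a small one. You propose to deduce that the leading Laurent coefficient is positive by showing that (i) the unramified local factors are manifestly non-negative and (ii) the finitely many exceptional local factors are individually nonzero because $\varphi_v\in\Lambda_v$. This reasoning tacitly assumes that the leading constant is an Euler product with one local factor per place, whose non-vanishing would reduce to local non-vanishing. It is not. After M\"obius inversion and Poisson summation the constant is
\begin{equation*}
 c_f(k,G) = \sum_{H\subseteq G}\frac{\mu(G/H)}{|\OO_k^*\otimes \dual{H}|}
  \sum_{x\in k^* \otimes \dual{H}}\prod_{v}\frac{\widehat{f}_{H,v}(x_v;|G|/(|H|\alpha))}{\zeta_{k_0,v}(1)^{\nu}},
\end{equation*}
an \emph{alternating} sum over subgroups $H$ (via $\mu$), and for each $H$ an \emph{infinite} sum over $x$ of Euler products whose local factors involve the pairings $\pair{\chi_v}{x_v}$ and are therefore complex, not obviously positive. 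There is nothing a priori preventing massive cancellation among these terms; indeed, this is exactly where Wang-type obstructions live, and the hypothesis that a single global $\varphi$ realises all the $\Lambda_v$ is needed to rule them out. The paper's route to positivity is qualitatively different: it truncates at a finite set $T$ of places, applies Poisson summation \emph{in reverse} to rewrite the truncated constant as a sum over sub-$G$-extensions $\chi$ (equation \eqref{eq:positive_density_poisson_backwards}), then undoes the M\"obius inversion by Delsarte's formula to land on a genuinely non-negative sum over $\chi\in\gextk$ (equation \eqref{eq:positive_density_non_negative_sum}); only at that point is the global $\varphi$ used, first to contribute one strictly positive term, and then to manufacture an entire family of further global extensions agreeing with $\varphi$ at the exceptional places so that the limit $T\to\Omega_k$ stays bounded away from zero. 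Without this reverse Poisson step, the claim ``each exceptional local factor is nonzero, therefore the constant is positive'' is a non sequitur, and the proof as sketched does not go through.
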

Note that Theorem \ref{thm:conditions} implies the existence of a positive proportion of $G$-extensions of $k$ satisfying the infinitely many local conditions imposed by $\Lambda$.

At first sight, the appearance of Condition \eqref{eqn:conditions} in Theorem \ref{thm:conditions} may seem surprising. Yet there is a simple heuristic
explanation for its presence. Namely, since we are ordering field extensions by their discriminants, the asymptotic behaviour
is controlled by the ``mildly'' ramified $G$-extensions. Condition \eqref{eqn:conditions} says that
provided one counts extensions with small inertia groups, one obtains enough $G$-extensions of small discriminant 
to guarantee a positive proportion of all \mbox{$G$-extensions}. Theorem \ref{thm:bicyclic} also
illustrates that the conclusion of Theorem \ref{thm:conditions} can fail if the local conditions $\Lambda_v$
do not satisfy \eqref{eqn:conditions}.

One of the main difficulties in the proof of Theorem \ref{thm:conditions} is that, in general,
the leading constant occurring in the asymptotic formula for $N(k,G,\Lambda,B)$ is an 
\emph{infinite} alternating sum of Euler products (see \eqref{eqn:constant}). 
We have to work quite hard to show that this infinite sum converges absolutely and 
that it is non-zero. In order to prove this non-vanishing,
it is crucial in Theorem \ref{thm:conditions} that we assume the existence of \emph{some} $G$-extension which 
realises all the local conditions imposed by $\Lambda$; there even exist
\emph{finite} sets of local conditions which are not realised by any $G$-extension of $k$, 
as first noticed by Wang \cite{Wan50} in his counter-example to Grunwald's ``theorem''.

\subsection{Proof strategy}
We now explain some of the methods used in this paper.
\subsubsection*{Counting with local conditions}
Our approach to counting abelian extensions of bounded discriminant is based upon the work of Wright \cite{Wri89},
who provided the definitive result on counting abelian extensions of bounded discriminant, building upon numerous special cases handled by other authors \cite{Cohn}, \cite{Uraz77}, \cite{Khush77}, \cite{Baily80}, \cite{Baily81}, \cite{Steckel} \cite{Taylor}, \cite{Zhang}, \cite{Mak85}. 
Like Wright, we proceed by studying the discriminant zeta function
\begin{equation}
\label{eq:zeta_function}
\sum_{\varphi}\frac{1}{\Delta(\varphi)^s}, \quad \re s \gg 1,
\end{equation}
where the sum is over suitable $G$-extensions of $k$. 
We relate the analytic properties of such zeta functions
to the original counting problem via standard Tauberian theorems (e.g.~\cite[Thm.~A.1]{CLT}).
Wright's method uses class field theory to translate the problem of counting $G$-extensions of $k$ into a problem
concerning the ideles of $k$.
 Note that Wright considered the slightly different problem of counting Galois extensions $K/k$ inside $\bar{k}$ with Galois group isomorphic to $G$, without specifying the isomorphism. We include the choice of isomorphism as part of the data as it is required for our applications, leads to a more natural framework, and also simplifies some parts of the proof. 

There are a few differences between our approach and Wright's.
First, we replace some of the ad hoc steps in his paper
by more conceptual methods, namely harmonic analysis on the idele class group. 
This new approach leads to a more powerful and versatile framework. This is for more than
just aesthetic purposes; it is exactly the tools from harmonic analysis which allow us to 
count number fields with infinitely many local conditions, via a version of the Poisson
summation formula. 
This approach is inspired, at least on a philosophical level, 
by the harmonic analysis approach to Manin's conjecture for toric varieties,
as pioneered by Batyrev and Tschinkel \cite{BT98}.
Next, at numerous points Wright uses the ``fact'' that
the map
$$k^*/k^{*n} \to \Adele_k^* /\Adele_k^{*n},$$
is injective for all $n \in \NN$; see for instance the  claim just before \cite[Lem.~3.1]{Wri89}
and the discussion following \cite[Prop.~4.1]{Wri89}.
This is not true in general, as first noticed by Wang \cite{Wan50} (see \cite[Ch.~IX.1]{NSW00} for a modern account).
Our harmonic analysis approach highlights
where these mistakes occur and moreover shows why they do not affect Wright's main result (see e.g.~the proof of Proposition \ref{prop:Poisson}).

\subsubsection*{Hasse norm principle and weak approximation}
In order to apply our counting results to the study of the Hasse norm principle,
we use a theorem of Tate which explicitly calculates
the Tate-Shafarevich group of the associated norm one torus (see Theorem \ref{thm:Tate}). 
Whilst this gives a complete criterion to check whether a given Galois extension
satisfies the Hasse norm principle, part of the difficulty with combining this with our counting results
is that, in general, this criterion is not a simple collection of independent local conditions,
i.e.,~different places may interfere with each other. However, in Section \ref{sec:HPandWA}
we use Tate's theorem to obtain both necessary and sufficient collections of 
independent local conditions for the Hasse norm
principle to hold, and it is these conditions that turn out to be susceptible to our counting methods. 
Note that these difficulties do not arise for the question of weak approximation: it turns out that
the problem of whether $\Res_{K/k}^1 \Gm$ satisfies weak approximation \emph{is} completely
controlled by independent local conditions when $K/k$ is abelian (see Lemma \ref{lem:WA}).

\subsection{Concluding remarks}
We hope that the tools and results developed in this paper for counting abelian extensions with infinitely many local conditions imposed may be of independent interest. Indeed, for many applications it is often necessary to count extensions with imposed local conditions,
e.g.~counting only those extensions for which certain primes have some given splitting behaviour. 
Analogous results have recently been obtained for extensions of degrees $d \in \{2,3,4,5\}$
with Galois group $S_d$ and fairly general local conditions by Bhargava \cite{Bha14d}.
Wright also has a result \cite[Thm.~1.4]{Wri89} on
counting abelian extensions with a \emph{single} local condition and there is, moreover, work of Wood \cite{Woo10}
on counting abelian extensions of \emph{bounded conductor} with finitely many local conditions imposed.
However, the authors know of no previously existing work where general abelian extensions have been successfully 
counted with \emph{infinitely} many non-trivial local conditions.

We finish by discussing possible further directions. First, as in Wright's paper, 
our methods should extend to global fields, providing that
one avoids suitable characteristics as Wright does. Secondly, in this paper we have focused on counting abelian extensions by \emph{discriminant}, in order to
put the problem into a more general framework. Another natural problem would be to 
instead count abelian extensions of bounded \emph{conductor}, by combining our harmonic
analysis methods with the approach of Wood \cite{Woo10}. 
Thirdly, one could try to improve Theorem~\ref{thm:HNP_0} from a zero density result to a precise asymptotic formula;
even the biquadratic case appears to be non-trivial.
Finally, there is a wealth of literature on counting number fields of bounded discriminant.
It would be interesting to see if these tools could be adapted to count failures of the
Hasse norm principle for some non-abelian extensions.

\subsection{Outline of the paper}
Sections \ref{sec:disc} and \ref{sec:harmonic} are devoted to setting up our general framework for  counting abelian extensions. In particular, we use class field theory to translate the problem to the study of certain conductor series, which we explain how to attack using tools from harmonic analysis in Section \ref{sec:harmonic}.

Section \ref{sec:conditions} is the technical heart of the paper. 
Here we use the general tools developed in the previous two sections to prove Theorem \ref{thm:conditions}, which is in some respects our main theorem. Section \ref{sec:bicyclic} contains some applications of Theorem \ref{thm:conditions} which will be used in our later proofs.

In Section \ref{sec:HPandWA}, we finish by proving the remaining theorems stated in the introduction. These are obtained by combining various algebraic and cohomological calculations with suitable applications of Theorem \ref{thm:conditions} and the results from Section \ref{sec:bicyclic}.

\subsection{Notation and conventions} \label{sec:notation}

We fix a number field $k$ throughout the paper and use the following notation.

\begin{longtable}{ll}
$\Adele^*$ & the ideles of $k$\\
$\OO_k$ &the ring of integers of $k$\\
$\Omega_k$ & the set of all places of $k$\\
$\Omega_\infty$ & the set of all archimedean places of $k$\\
$\Omega_L \text{ or } \Omega(L)$ & the set of all places of a finite extension $L$ of $k$\\
$S$ & a finite set of places of $k$\\
$\OO_S$ &the $S$-integers of $k$\\
$\Adele^*_S$ \text{or} $\Adele^*(S)$ & the $S$-ideles of $k$, $\Adele^*_S:=\prod_{v\in S}{k_v^*}\prod_{v\notin S}{\OO_v^*}$\\
$v$ & a place of $k$\\
$k_v$ & the completion of $k$ at $v$\\
$\OO_v$ & the ring of integers of $k_v$. For $v\mid \infty$, by convention $\OO_v := k_v$\\
$\pi_v$ & a uniformiser at a finite place $v$\\
$q_v$ & the cardinality of the residue field at a finite place $v$\\
$K$ & a Galois extension of $k$\\
$K_v$ & the completion of $K$ at some choice of place above $v$\\
$D_v$ & the Galois group of $K_v/k_v$\\
$I_v$ & the inertia group of $K_v/k_v$
\end{longtable}

\smallskip

\noindent For a Dedekind domain $R$, we use the following notation.

\begin{longtable}{ll}
$I(R)$ & the group of non-zero fractional ideals of $R$\\
$P(R)$ & the subgroup of principal ideals in $I(R)$\\
$\Pic(R)$ & the ideal class group of $R$, $\Pic(R):=I(R)/P(R)$
\end{longtable}

\smallskip

\noindent All locally compact abelian groups in this paper are assumed to be Hausdorff.
For locally compact abelian groups $A$ and $B$, we use the following notation.

\begin{longtable}{ll}
$\Hom(A,B)$ & the group of \emph{continuous} homomorphisms from $A$ to $B$, \\
& equipped with the
compact-open topology\\
$S^1$ & the unit circle \\
$\mu_n$ & the group of $n$th roots of unity\\
$\dual{A}$ & the Pontryagin dual of $A$, $\dual{A} := \Hom(A, S^1)$\\
$\QZdual{A}$ & the $\QQ/\ZZ$-dual of $A$, $\QZdual{A} := \Hom(A, \QQ/\ZZ)$\\
$\pair{\cdot}{\cdot}$ & the natural pairing $A\times \dual{A} \to S^1$\\
\end{longtable}

\noindent If $B$ is finite or $B = S^1$, then $\Hom(A,B)$ is itself a locally compact abelian group
(this is a special case of \cite[Cor., p.~377]{Mos67}).
\smallskip

\noindent 
If $B$ is a closed subgroup of $A$, then we identify $\dual{(A/B)}$ with the subgroup $B^\perp$ of characters of
$A$ that are trivial on $B$. We view $k^*$ and $k_v^*$ as closed subgroups of the locally compact abelian group $\Adele^*$. The induced topology on $k^*$ is discrete. For a finite abelian group $G$ and $x\in k^* \otimes G$, we write $x_v$ for its image under the natural map $k^* \otimes G\to k_v^* \otimes G$ induced by the natural inclusion $k^*\to k_v^*$; note that this former map is not injective in general.

\smallskip

\noindent 
For a finite abelian group $G$ endowed with the discrete topology, we use the following notation.

\begin{longtable}{ll}
$|G|$ & the order of $G$\\
$\exp(G)$ & the exponent of $G$\\
$Q$ & the smallest prime dividing $|G|$\\
$\phi_Q(G)$ & the number of elements of order $Q$ in $G$\\
$\beta_G:=\log_Q(\phi_Q(G) + 1)$ & \\
$\alpha(G):=|G|(1-Q^{-1})$ & \\
 $\nu(k,G) := \phi_Q(G)/[k(\mu_Q):k]$ \\
\end{longtable}

\smallskip

\noindent 
Every finite abelian group $G$ may be written uniquely as 
\begin{equation} \label{eqn:n_j}
	G = \ZZ/n_1\ZZ \oplus \cdots \oplus \ZZ/n_l\ZZ,
\end{equation}
with $n_j > 1$ and $n_{j+1} \mid n_j$ for $1\leq j< l$.
We reserve the variables $n_1,\ldots,n_l$ for the above notation throughout the paper. 

\noindent 
For a commutative group scheme $X$ over a number field $k$, we denote by 
\begin{equation} \label{def:Sha}
	\Sha(k, X) := \ker \left( \mathrm{H}^1(k,X)  \to \prod_v \mathrm{H}^1(k_v,X) \right)
\end{equation}
the Tate-Shafarevich group  of $X$. We shall often omit $k$ if it is clear.
The non-zero elements of $\Sha(k,X)$ classify those $X$-torsors which fail the Hasse principle.

Given a set $X$, we denote by $\Const_X: X \to \{1\}$ the constant function with value $1$
(we shall often omit the subscript if it is clear from the context).

Given $a,b\in \ZZ$,  we write $(a,b)$ for their greatest common divisor.

\subsection{Acknowledgements}
Part of this work was completed whilst attending the \emph{Rational Points 2015} workshop -- we thank Michael Stoll for providing such a stimulating working environment. We thank Jeremy Rickard and Norbert Hoffmann for useful discussions, and
are grateful to Manjul Bhargava and Melanie Matchett Wood for their interest in our paper. We also thank Jean-Louis Colliot-Th\'{e}l\`{e}ne for useful comments and references, and the referees for helpful remarks.

\section{From discriminant series to conductor series} \label{sec:disc}
Let $k$ be a number field and let $G$ be a non-trivial finite abelian group. We begin by describing our set-up for counting $G$-extensions of $k$ and the transition to the study of conductor series. This closely follows \cite[\S2]{Wri89}, though unlike Wright we do not choose a presentation of $G$.

\subsection{Discriminant series}

Let $f$ be any complex-valued function on the set of sub-$G$-extensions of $k$. Consider the (formal, for now) Dirichlet series
\begin{equation}\label{eq:discriminant_series}
  D_{G,f}(s):=\sum_{\varphi\in \gextk}\frac{f(\varphi)}{\Delta(\varphi)^s}.
\end{equation}
Since $G$ is abelian, global class field theory allows us to identify $G$-extensions of $k$ with surjective continuous homomorphisms $\Adele^*/k^*\to G$, where $G$ is endowed with the discrete topology. The conductor-discriminant formula gives
\begin{equation}\label{eq:conductor_discriminant}
  D_{G,f}(s) = \sum_{\varphi\in\gextk}f(\varphi)\prod_{\psi\in\dual{(\Adele^*/\ker\varphi)}}\Phi(\psi)^{-s},
\end{equation}
where $\Phi(\psi)$ is the reciprocal of the idelic norm of the conductor of $\psi$. 
To deal with the rather unpleasant surjectivity condition, we introduce certain conductor series.

\subsection{Conductor series}
Let $X$ be one of the following locally compact abelian groups: the group $\Adele^*$ of ideles, the idele class group  $\Adele^*/k^*$, the local multiplicative group $k_v^*$ at the place $v$ of $k$, or the local unit group $\OO_v^*$ at $v$. In each case, the conductor of a character $\psi\in\dual X$ of finite order is defined as the conductor of its kernel. Let us denote by $\Phi(\psi)$ the reciprocal of the (idelic or $v$-adic) norm of this conductor.

Let $H$ be a finite abelian group endowed with the discrete topology, and let $\chi \in \Hom(X,H)$. The kernel of $\chi$ is a finite-index open subgroup of $X$. By Pontryagin duality, we obtain from $\chi$ a canonical surjective homomorphism
\begin{equation*}
  \widehat{\chi} : \dual{H} \to \dual{(X/\ker\chi)}.
\end{equation*}
We define
\begin{equation}\label{eq:def_Phi_H}
  \Phi_H(\chi) := \prod_{h\in \dual{H}}\Phi(\widehat{\chi}(h)) = \prod_{\psi\in\dual{(X/\ker\chi)}}\Phi(\psi)^{|H|/|X/\ker\chi|}.
\end{equation}
Note that there is a natural inclusion $\Hom(\Adele^*/k^*, H) \subseteq \Hom(\Adele^*, H)$, and for any $\chi \in \Hom(\Adele^*/k^*, H)$, the two corresponding definitions \eqref{eq:def_Phi_H} of $\Phi_H(\chi)$ coincide. In the case where $X = \Adele^*$, we have a restricted direct product
\begin{equation} \label{def:restricted_product}
\Hom(\Adele^*, H) = \lim_{S\to\Omega_k} \Hom(\Adele^*, H)(S)
\end{equation}
(see e.g. \cite[Thm.~1.7]{Mor05}), where
$$\Hom(\Adele^*, H)(S) := 
\prod_{v \in S} \Hom(k_v^*, H) \times \prod_{v \not \in S} \Hom(k_v^*/\OO_v^*, H).$$
Note that $\Hom(k_v^*,H)$ is finite and discrete, and coincides with the group of abstract group homomorphisms $k_v^*\to H$. Moreover, $\Hom(k_v^*/\OO_v^*,H)$ may be identified with those homomorphisms $k_v^*\to H$ which are trivial on $\OO_v^*$. Here for archimedean $v$, by convention we set $\OO_v^* = k_v^*$. For non-archimedean $v$, a choice of uniformiser in $k_v$ yields an isomorphism
\begin{equation}\label{eq:local_character_direct_sum}
  \Hom(k_v^*,H)\cong\Hom(k_v^*/\OO_v^*,H)\oplus\Hom(\OO_v^*,H).
\end{equation}
For $\chi_1 \in\Hom(k_v^*/\OO_v^*, H)$ and $\chi_2 \in \Hom(k_v^*, H)$, we have $\Phi_H(\chi_1\chi_2) = \Phi_H(\chi_2)$. 
Let $\chi = (\chi_v)_v \in \Hom(\Adele^*, H)$, where $\chi_v \in \Hom(k_v^*, H)$. Then $\Phi_H(\chi_v) = 1$ for all but finitely many $v$, and
\begin{equation*}
  \Phi_H(\chi) = \prod_{v}\Phi_H(\chi_v).
\end{equation*}
We are now ready to define the conductor series
\begin{equation}\label{eq:conductor_series}
  F_{H,f}(s) := \sum_{\chi \in \Hom(\Adele^*/k^*, H)}\frac{f(\chi)}{\Phi_H(\chi)^{s}}.
\end{equation}

\begin{lemma}\label{lem:conductor_to_discriminant}
  For any subgroup $H$ of $G$, we have
  \begin{equation*}
    F_{H,f}(s) = \sum_{J\subseteq H}D_{J,f}(s|H|/|J|),
  \end{equation*}
  where the sum runs over all subgroups $J$ of $H$. The series $D_{J,f}(s)$ is defined as in \eqref{eq:discriminant_series}, but with $G$ replaced by $J$.
\end{lemma}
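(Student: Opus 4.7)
The plan is to partition the sum defining $F_{H,f}(s)$ according to the image of $\chi$. Every continuous homomorphism $\chi \in \Hom(\Adele^*/k^*, H)$ factors uniquely as a surjection $\Adele^*/k^* \twoheadrightarrow J$ followed by the inclusion $J \hookrightarrow H$, where $J = \mathrm{im}(\chi)$ is a subgroup of $H$, and conversely any such data assemble to a $\chi$. Global class field theory identifies the set of surjective continuous homomorphisms $\Adele^*/k^* \twoheadrightarrow J$ with the set of $J$-extensions $\varphi$ of $k$. Moreover, whether we view $\chi$ as landing in $H$, in $J$, or (via the inclusion $H \hookrightarrow G$) in $G$, the underlying sub-$G$-extension $\Gal(\bar{k}/k) \to G$ is the same, so the value of $f$ is preserved by these identifications.

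Next, I would rewrite $\Phi_H(\chi)$ in terms of the discriminant of the $J$-extension $\varphi$ corresponding to $\chi$. Since $\chi$ surjects onto $J$, we have $\Adele^*/\ker\chi \cong J$, and the second expression in \eqref{eq:def_Phi_H} becomes
\[\Phi_H(\chi) \,=\, \Bigl(\prod_{\psi \in \dual{J}} \Phi(\psi)\Bigr)^{|H|/|J|}.\]
By the conductor-discriminant formula (the same identity used to derive \eqref{eq:conductor_discriminant}), the inner product equals $\Delta(\varphi)$. Consequently $\Phi_H(\chi)^s = \Delta(\varphi)^{s|H|/|J|}$.

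Combining these two observations and regrouping the sum by the image $J$:
\[F_{H,f}(s) \,=\, \sum_{J \subseteq H}\ \sum_{\varphi \in J\text{-ext}(k)} \frac{f(\varphi)}{\Delta(\varphi)^{s|H|/|J|}} \,=\, \sum_{J \subseteq H} D_{J,f}\!\left(s\,|H|/|J|\right),\]
which is the claimed identity. The argument is a formal rearrangement of sums, so I do not expect any serious obstacle: the only points to verify are the bookkeeping of the exponent $|H|/|J|$ built into the definition of $\Phi_H$, and the compatibility of $f$ under the identification of $J$-valued surjections with $H$-valued homomorphisms of image $J$. Both are immediate from the definitions.
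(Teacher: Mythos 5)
Your proof is correct and follows essentially the same path as the paper's (very terse) argument: both sort $\chi\in\Hom(\Adele^*/k^*,H)$ by its image $J$, use the second expression in \eqref{eq:def_Phi_H} to write $\Phi_H(\chi)=\bigl(\prod_{\psi\in\dual{J}}\Phi(\psi)\bigr)^{|H|/|J|}$, identify the inner product with $\Delta(\varphi)$ via the conductor-discriminant formula, and regroup. Your explicit note on the compatibility of $f$ under the identifications $J\hookrightarrow H\hookrightarrow G$ makes precise a point the paper leaves implicit, but it is the same proof.
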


\begin{proof}
If the image of $\chi\in\Hom(\Adele^*/k^*,H)$ is a subgroup $J$ of $H$, then
\begin{equation*}
  \Phi_H(\chi) = \prod_{\psi\in \dual{(\Adele^*/\ker\chi)}}\Phi(\psi)^{|H|/|J|}.
\end{equation*}
Now sort the homomorphisms $\chi\in\Hom(\Adele^*/k^*,H)$ by their images.
\end{proof}

\subsection{M\"obius inversion}

Let $\mu$ be the M\"obius function on isomorphism classes of finite abelian groups. That is, $\mu(G) = 0$ if $G$ has a cyclic subgroup of order $p^n$, with $p$ a prime and $n\geq 2$, $\mu(G_1\times G_2) = \mu(G_1)\mu(G_2)$ if $G_1$ and $G_2$ have coprime order, and $\mu((\ZZ/p\ZZ)^n) = (-1)^np^{n(n-1)/2}$ for a prime $p$ and $n \in \NN$.

\begin{lemma}\label{lem:discriminant_to_conductor}
  We have
  \begin{equation*}
    D_{G,f}(s) = \sum_{H\subseteq G}\mu(G/H)F_{H,f}(s|G|/|H|),
  \end{equation*}
  where the sum runs over all subgroups $H$ of $G$.
\end{lemma}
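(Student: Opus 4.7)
The plan is to derive the identity by swapping summations and then invoking a Möbius-type identity on the isomorphism classes of finite abelian groups. First, substituting $s \mapsto s|G|/|H|$ into Lemma \ref{lem:conductor_to_discriminant} gives
\begin{equation*}
F_{H,f}(s|G|/|H|) \;=\; \sum_{J \subseteq H} D_{J,f}(s|G|/|J|).
\end{equation*}
Plugging this into the right-hand side of the claimed identity and interchanging the order of summation produces
\begin{equation*}
\sum_{H \subseteq G} \mu(G/H)\, F_{H,f}(s|G|/|H|)
\;=\; \sum_{J \subseteq G} D_{J,f}(s|G|/|J|) \sum_{J \subseteq H \subseteq G} \mu(G/H).
\end{equation*}
So the lemma reduces to showing that the inner sum equals $1$ when $J = G$ and vanishes otherwise.

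Next I would use the standard lattice isomorphism $H \leftrightarrow H/J$ between subgroups of $G$ containing $J$ and subgroups of $G/J$, under which $G/H \cong (G/J)/(H/J)$. This turns the inner sum into $\sum_{H' \subseteq G'} \mu(G'/H')$ with $G' := G/J$, so everything reduces to the combinatorial identity
\begin{equation*}
\sum_{H \subseteq G'} \mu(G'/H) \;=\; [G' = 1]
\end{equation*}
for every finite abelian group $G'$.

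I would prove this identity by multiplicativity. Since subgroups of a direct product of finite abelian groups of coprime orders split as products of subgroups in each factor, and $\mu$ is multiplicative in the same sense, the sum factors over the Sylow components of $G'$, reducing to the case where $G'$ is a $p$-group. For such a $G'$ of $p$-rank $d$ (so that $G'/pG' \cong (\ZZ/p\ZZ)^d$), $\mu(G'/H)$ vanishes unless the quotient $G'/H$ is elementary abelian. The number of subgroups $H$ with $G'/H \cong (\ZZ/p\ZZ)^r$ equals the Gaussian binomial coefficient $\binom{d}{r}_p$, counting $r$-dimensional $\FF_p$-quotients of $G'/pG'$. The sum becomes
\begin{equation*}
\sum_{r=0}^{d} \binom{d}{r}_p (-1)^r p^{r(r-1)/2} \;=\; \prod_{i=0}^{d-1}(1 - p^i),
\end{equation*}
by the $q$-binomial theorem. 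This product vanishes when $d \geq 1$ and equals $1$ (empty product) when $d = 0$, which is exactly the required statement.

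The main obstacle is this Möbius-style identity for $\mu$; once established, the rest of the proof is a routine rearrangement. The identity is a well-known special case of P.~Hall's formula for the Möbius function of the subgroup lattice of a finite abelian group, so one could alternatively simply cite Hall's result in place of the direct evaluation above.
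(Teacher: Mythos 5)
Your argument is correct, and it follows the same conceptual route as the paper: the paper's proof is a one-line citation to Delsarte's inversion formula, which is exactly the subgroup-lattice M\"obius identity $\sum_{H'\subseteq G'}\mu(G'/H') = [G'=1]$ that you reduce to (via Lemma \ref{lem:conductor_to_discriminant}, summation swap, and the correspondence theorem) and then prove. The only difference is that you supply the proof of that identity yourself, via multiplicativity over Sylow factors and the $q$-binomial theorem, rather than citing Delsarte or Hall; this is a legitimate and self-contained filling-in of the cited result, and the evaluation of $\sum_r \binom{d}{r}_p(-1)^r p^{r(r-1)/2} = \prod_{i=0}^{d-1}(1-p^i)$ is carried out correctly.
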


\begin{proof}
This is Delsarte's inversion formula \cite{Delsarte}, applied as in \cite[\S 2]{Wri89}.
\end{proof}

\section{Harmonic analysis} \label{sec:harmonic}
In the previous section, we reduced the study of our problem to the study of various conductor series. We shall tackle these using tools from harmonic analysis. The main result of this section is Proposition \ref{prop:Poisson}, which is a version of Poisson summation. Throughout this section we work over a number field $k$ and with a finite abelian group $G$ endowed with the discrete topology; note that in later sections we shall also apply the following results to subgroups $H$ of a given discrete finite abelian group $G$.

We will assume that the reader is familiar with the basic definitions and tools from abstract harmonic analysis on locally compact abelian groups. A detailed treatment can be found in \cite[Ch.~2]{Bou67}. For a general overview, see \cite[\S1.1]{Mor05}.

\subsection{Topological considerations} \label{subsec:top}
We begin our analysis with some topological considerations. In this paper, we will need to consider certain tensor products of topological groups. This theory can be quite complicated in general; see for example \cite{HS07} and \cite{Mos67}. However, we will only require this theory in some special cases where everything is well-behaved, which we now explain.

Our first result may be viewed as a generalisation of \cite[Cor., p.~386]{Mos67}.

\begin{lemma} \label{lem:quotient}
	Let $X$ be a locally compact abelian group. Let $n \in \NN$ and assume that
	$nX$ is a closed subgroup of $X$. Then there is a canonical isomorphism
	$$\dual{\Hom(X, \mu_n)} \cong X/nX$$
	of locally compact abelian groups, where $X/nX$ is equipped with the quotient
	topology.
\end{lemma}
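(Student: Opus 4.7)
The plan is to identify $\Hom(X,\mu_n)$ with the $n$-torsion subgroup of $\dual{X}$, which by Pontryagin duality is the Pontryagin dual of $X/nX$, and then dualise once more.

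First I would note that the inclusion $\mu_n \hookrightarrow S^1$ as a closed subgroup induces a continuous injection $\Hom(X,\mu_n) \hookrightarrow \Hom(X,S^1) = \dual{X}$, both sides equipped with the compact-open topology. Its image is exactly the $n$-torsion subgroup $\dual{X}[n]$, since a character $X \to S^1$ factors through $\mu_n$ iff its $n$th power is trivial. I would check that this is a topological embedding, so that $\Hom(X,\mu_n) \cong \dual{X}[n]$ as locally compact abelian groups.

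Next I would observe that $\dual{X}[n]$ coincides with the annihilator $(nX)^\perp$ of $nX$ in $\dual{X}$: a character $\chi \in \dual{X}$ satisfies $\chi^n = 1$ iff $\chi$ vanishes on $nX$. Since $nX$ is assumed closed, the quotient $X/nX$ is a locally compact abelian group, and the standard Pontryagin duality fact $\dual{(X/B)} \cong B^\perp$ for closed subgroups $B \subseteq X$ (as topological groups) gives
\begin{equation*}
\Hom(X,\mu_n) \;\cong\; \dual{X}[n] \;=\; (nX)^\perp \;\cong\; \dual{(X/nX)}.
\end{equation*}
Finally I would apply Pontryagin duality to both sides: since $X/nX$ is locally compact abelian, the canonical evaluation map $X/nX \to \dual{\dual{(X/nX)}}$ is a topological isomorphism, hence
\begin{equation*}
\dual{\Hom(X,\mu_n)} \;\cong\; \dual{\dual{(X/nX)}} \;\cong\; X/nX,
\end{equation*}
and the resulting identification is the one induced by evaluation, hence canonical.

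The main obstacle I expect is the topological (as opposed to purely algebraic) verification of the two identifications $\Hom(X,\mu_n) \cong \dual{X}[n]$ and $\dual{(X/nX)} \cong (nX)^\perp$. Both are standard consequences of the definitions of the compact-open topology and of Pontryagin's theorem under the hypothesis that $nX$ is closed, but one needs to be careful since, without closedness of $nX$, the quotient $X/nX$ need not be Hausdorff and Pontryagin duality fails, which is exactly why the hypothesis on $nX$ appears in the statement.
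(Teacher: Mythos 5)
Your proof is correct and follows essentially the same route as the paper: identify $\Hom(X,\mu_n)$ with $(nX)^\perp = \dual{(X/nX)}$ and dualise, using closedness of $nX$ to ensure Pontryagin duality applies to the quotient. The extra intermediate step via $\dual{X}[n]$ and the explicit topological remarks are fine but do not change the argument.
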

\begin{proof}
	As $nX \subset X$ is closed, we have 
	$$(nX)^\perp \cong \dual{(X/nX)}.$$
	However, by definition we have
	$$\Hom(X, \mu_n) = \{ \chi \in \dual{X}: \chi^n = 1\} = (nX)^\perp.$$
	Applying Pontryagin duality, we obtain the result.
\end{proof}

For many locally compact abelian groups $X$, the subgroup $nX \subset X$ is closed 
for all $n\in \NN$.
For example, this is the case if $X$ is compact or discrete. This property is closed under arbitrary direct products and direct sums (but not under restricted direct products, in general). Moreover, an elementary argument shows that $nX$ is a closed subgroup for all $n\in \NN$ when $X=\Adele^*$ or $X=\Adele^*/k^*$. These cases will be sufficient for all the applications required in this paper.

\begin{lemma} \label{lem:tensor}
	Let $X$ be a locally compact abelian group and let $G$ be a finite abelian group.
	Assume that $nX$ is a closed subgroup of $X$ for all $n \mid |G|$. Then there is 
	a canonical isomorphism 
	$$\dual{\Hom(X, G)} \cong X \otimes \dual{G}$$
	of abelian groups.
\end{lemma}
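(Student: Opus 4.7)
The plan is to identify a canonical candidate for the isomorphism via an evaluation pairing, and then reduce to the cyclic case which was settled in Lemma \ref{lem:quotient}. To exhibit the map, I would consider the natural biadditive pairing
\begin{equation*}
\Hom(X, G) \times (X \otimes \dual{G}) \longrightarrow S^1, \qquad (\varphi, x \otimes \chi) \longmapsto \chi(\varphi(x)),
\end{equation*}
which induces a group homomorphism $\Psi \colon X \otimes \dual{G} \to \dual{\Hom(X, G)}$; it suffices to show that $\Psi$ is an isomorphism of abelian groups.

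Next I would decompose $G \cong \bigoplus_{i=1}^{l} \ZZ/n_i\ZZ$ as in \eqref{eqn:n_j}. Since each $n_i$ divides $|G|$, the hypothesis ensures that $n_i X$ is closed in $X$ for all $i$. This decomposition of $G$ yields a topological group isomorphism $\Hom(X, G) \cong \bigoplus_{i=1}^{l} \Hom(X, \ZZ/n_i\ZZ)$ (and hence, after dualising, a decomposition of $\dual{\Hom(X,G)}$ as a product), and correspondingly $X \otimes \dual{G} \cong \bigoplus_{i=1}^{l} X \otimes \dual{(\ZZ/n_i\ZZ)}$, with the pairing above compatible with these decompositions. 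This reduces the problem to the case where $G$ is cyclic of order $n$. Writing $\dual G = \mu_n$ and fixing a primitive $n$-th root of unity, one obtains an isomorphism of abelian groups $X \otimes \mu_n \cong X/nX$, while the pairing above transforms into the standard evaluation pairing $\Hom(X, \mu_n) \times X/nX \to S^1$. This is precisely the perfect pairing underlying Lemma \ref{lem:quotient}, so $\Psi$ is an isomorphism in the cyclic case, and hence in general.

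The main obstacle is the bookkeeping in the cyclic step: one must verify that the canonical pairing above does correspond, under the non-canonical identifications $\dual{(\ZZ/n\ZZ)} \cong \mu_n$ and $X \otimes \mu_n \cong X/nX$, to the Pontryagin dual pairing appearing in Lemma \ref{lem:quotient}. This is a direct but slightly fiddly unwinding of definitions, after which the conclusion is immediate. The map $\Psi$ itself is canonical even though the two identifications used in the cyclic case individually depend on choices; their non-canonicities cancel against each other. Note also that we only claim an isomorphism of abelian groups, which allows us to avoid equipping $X \otimes \dual{G}$ with a topology.
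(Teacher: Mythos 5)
Your proof is correct and follows essentially the same route as the paper: define the natural evaluation map $X \otimes \dual{G} \to \dual{\Hom(X,G)}$, decompose over the cyclic factors of $G$, and invoke Lemma~\ref{lem:quotient}. The only cosmetic difference is that the paper reduces to the case $G = \mu_n$, so that $\dual{G} = \ZZ/n\ZZ$ and both the identification $X \otimes \ZZ/n\ZZ \cong X/nX$ and the appeal to Lemma~\ref{lem:quotient} are canonical with no primitive root of unity to choose; your choice $G = \ZZ/n\ZZ$ forces the small extra bookkeeping you describe, but the two presentations are interchangeable once one knows the overall map is canonical.
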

Here $X \otimes \dual{G}$ denotes the usual tensor product of $X$ with $\dual{G}$, considered as abelian groups (i.e. forgetting the topology on $X$).
\begin{proof}
	Consider the natural homomorphism
	$$X \otimes \dual{G} \to \dual{\Hom(X, G)}, \quad 
	x \otimes \chi \mapsto \left((\phi: X \to G) \mapsto \chi(\phi(x)) \right)$$
	(a simple calculation shows that the induced homomorphism $\Hom(X,G) \to S^1$ is indeed
	continuous).
	To show that this is an isomorphism, it suffices to treat the case where $G = \mu_n$.
	Here we obtain the following commutative diagram
	$$
	\xymatrix{
	X \otimes \ZZ/n\ZZ \ar[r] \ar[dr] & \dual{\Hom(X, \mu_n)} \\
	& X/nX. \ar[u] \\
	}$$
	The diagonal arrow is the usual isomorphism of abelian groups, and the vertical
	arrow is the isomorphism of Lemma \ref{lem:quotient}. This proves the result.
\end{proof}

Let $X$ be as in Lemma \ref{lem:tensor}. Then Lemma \ref{lem:tensor} canonically identifies the Pontryagin dual of $\Hom(X, G)$ with $X \otimes \dual{G}$, \emph{as an abelian group}. Since $\dual{\Hom(X, G)}$ carries a topology, we may use this isomorphism to equip $X \otimes \dual{G}$ with the structure of a locally compact abelian group. Lemma \ref{lem:quotient} gives a very simple way to visualise the topology on $X \otimes \dual{G}$, on choosing a presentation of $G$. For example, if $X$ is discrete then we see that $X \otimes \dual{G}$ is also discrete.

Henceforth, for $X$ as in Lemma \ref{lem:tensor}, \emph{we identify $X \otimes \dual{G}$ with the Pontryagin dual of $\Hom(X, G)$, viewed as locally compact abelian groups}. We denote by 
$$ \langle \cdot , \cdot \rangle: \Hom(X, G) \times (X \otimes \dual{G}) \to S^1$$
the pairing induced by Pontryagin duality.

\subsection{Haar Measures} \label{sec:measures}
We now return to the study of $\Hom{(\Adele^*,G)}$,
beginning with a choice of normalisation for its Haar measure.
For each place $v$, we equip $\Hom(k_v^*,G)$ with the unique Haar measure $\mathrm{d}\chi_v$ such
that $$\vol(\Hom(k_v^*/\OO_v^*,G)) = 1.$$
For archimedean $v$ this is simply the counting measure (as $\OO_v^*=k_v^*$ by convention),
whilst for non-archimedean $v$ the measure $\mathrm{d}\chi_v$ is a factor of
$$ \frac{1}{ \#\Hom(k_v^*/\OO_v^* , G)} = \frac{1}{ |G|} $$
times the counting measure. Recalling the restricted product description \eqref{def:restricted_product}
of $\Hom(\Adele^* , G)$, one easily sees that the product of these 
Haar measures converges to give a Haar measure $\mathrm{d}\chi$ on $\Hom(\Adele^* , G)$.

\subsection{Fourier transforms}\label{subsec:fourier_transforms}
Throughout this section, for each place $v$ we let \mbox{$f_v: \Hom(k_v^* , G)\to \CC$} be 
functions  which take only the value $1$ on $\Hom(k_v^*/\OO_v^* , G)$ for all but finitely many 
$v$. The product $f=\prod_v f_v: \Hom(\Adele^* , G)\to \CC$ is then a well-defined continuous function. 
For $s\in\CC$, $x_v \in k_v^* \otimes \dual{G}$ and $x=(x_v)_v \in \Adele^*\otimes \dual{G}$, we denote by
$$\widehat{f}_v(x_v;s) 
= \int_{\chi_v \in \Hom(k_v^* , G)} \frac{f_v(\chi_v)\pair{\chi_v}{x_v}}{\Phi_G(\chi_v)^{s}}  \mathrm{d}\chi_v,
\,\, \widehat{f}(x;s) = \int_{\chi \in \Hom(\Adele^* , G)} \frac{f(\chi)\pair{\chi}{x}}{\Phi_G(\chi)^{s}}  \mathrm{d}\chi$$
the local and global Fourier transforms of the function $f \Phi_G^{-s}$, respectively, if they exist. 
Here, as explained in Section \ref{subsec:top}, we are identifying $\Adele^*\otimes \dual{G}$ and 
$k_v^* \otimes \dual{G}$ with the Pontryagin duals of $\Hom(\Adele^* , G)$ and $\Hom(k_v^* , G)$, respectively,
and denote by $\langle \cdot, \cdot \rangle$ the pairing induced by Pontryagin duality.
We first calculate the local Fourier transforms at almost all places. Let $v$ be non-archimedean.
Note that, by our choice of measures, we have
$$\widehat{f}_v(x_v;s) 
= \frac{1}{|G|}\sum_{\chi_v \in \Hom(k_v^* , G)}\frac{f_v(\chi_v)\pair{\chi_v}{x_v}}{\Phi_G(\chi_v)^{s}}.$$
Recalling \eqref{eq:local_character_direct_sum}, for 
$x_v \in k_v^*\otimes \dual{G}$ and $\chi_v \in \Hom(\OO_v^*, G)$ we define the ``average''
\begin{equation*}
  \tau_{f_v}(\chi_v, x_v) := \frac{1}{|G|}\sum_{\psi_v \in \Hom(k_v^*/\OO_v^*, G)} f_v(\chi_v\psi_v)\pair{\psi_v}{x_v}.
\end{equation*}
\begin{lemma}\label{lem:local}
  Let $v$ be a non-archimedean place of $k$ such that $(\exp(G),q_v)=1$. 
  Then for $x_v \in k_v^* \otimes \dual{G}$ we have
  \begin{equation*}
    \widehat{f}_v(x_v;s) = \sum_{m\mid (\exp(G),q_v-1)}\left(\sum_{\substack{\chi_v\in\Hom(\OO_v^*, G)\\ \ker\chi_v=\OO_v^{*m}}}\pair{\chi_v}{x_v}\tau_{f_v}(\chi_v,x_v)\right)q_v^{-|G|(1-1/m)s}.
\end{equation*}
\end{lemma}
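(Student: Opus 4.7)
The plan is to reduce the integral to a finite sum using the factorization of $\Hom(k_v^*,G)$ induced by a choice of uniformiser, compute the conductor $\Phi_G(\chi_v')$ for each ramified component $\chi_v'$, and then partition by kernel. First I would recall that with our normalisation $\mathrm{d}\chi_v$ is $1/|G|$ times counting measure, so
$$\widehat f_v(x_v;s) = \frac{1}{|G|}\sum_{\chi_v\in\Hom(k_v^*,G)} \frac{f_v(\chi_v)\pair{\chi_v}{x_v}}{\Phi_G(\chi_v)^s}.$$
Using \eqref{eq:local_character_direct_sum}, write uniquely $\chi_v = \psi_v\chi_v'$ with $\psi_v\in\Hom(k_v^*/\OO_v^*,G)$ and $\chi_v'\in\Hom(\OO_v^*,G)$ (extended to $k_v^*$ by sending $\pi_v\mapsto 0$). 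Since $\Phi_G(\psi_v\chi_v')=\Phi_G(\chi_v')$ and the pairing splits as $\pair{\chi_v}{x_v}=\pair{\psi_v}{x_v}\pair{\chi_v'}{x_v}$, the inner sum over $\psi_v$ produces exactly $\tau_{f_v}(\chi_v',x_v)$, leaving
$$\widehat f_v(x_v;s) = \sum_{\chi_v'\in\Hom(\OO_v^*,G)} \pair{\chi_v'}{x_v}\tau_{f_v}(\chi_v',x_v)\Phi_G(\chi_v')^{-s}.$$

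The main step is to evaluate $\Phi_G(\chi_v')$ explicitly under the hypothesis $(\exp(G),q_v)=1$. Since the pro-$p$ part $1+\pi_v\OO_v$ is uniquely divisible by every integer coprime to $p$, every $\chi_v':\OO_v^*\to G$ is trivial on $1+\pi_v\OO_v$ and factors through the cyclic group $\OO_v^*/(1+\pi_v\OO_v)\cong\mu_{q_v-1}$. If the image of $\chi_v'$ has order $m$, then necessarily $m\mid (\exp(G),q_v-1)$, and using that $(1+\pi_v\OO_v)$ is $m$-divisible one checks $\ker\chi_v'=\OO_v^{*m}$. Conversely every such $m$ arises this way. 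So partitioning $\Hom(\OO_v^*,G)$ by the integer $m\mid(\exp(G),q_v-1)$ with $\ker\chi_v'=\OO_v^{*m}$ is the right grouping and gives the prescribed inner sum; it only remains to compute the corresponding conductor factor.

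For fixed $\chi_v'$ with $\ker\chi_v'=\OO_v^{*m}$, the dual map $\widehat{\chi_v'}\colon \dual G\twoheadrightarrow\dual{(k_v^*/\ker\chi_v')}$ lands in a cyclic group of order $m$. Its kernel has size $|G|/m$ and the corresponding characters of $k_v^*$ are unramified, contributing factor $\Phi=1$; the remaining $|G|(1-1/m)$ characters are tamely ramified of conductor exponent $1$, contributing $\Phi=q_v$. Hence $\Phi_G(\chi_v')=q_v^{|G|(1-1/m)}$, and inserting $\Phi_G(\chi_v')^{-s}=q_v^{-|G|(1-1/m)s}$ into the sum above, then pulling this factor outside the inner sum over $\chi_v'$ with $\ker\chi_v'=\OO_v^{*m}$, yields precisely the stated formula.

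The only genuine obstacle is the conductor calculation in the third paragraph: one must be careful that the hypothesis $(\exp(G),q_v)=1$ forces tame ramification, so every nontrivial character in the image of $\widehat{\chi_v'}$ has conductor exponent exactly $1$, and that the bookkeeping in \eqref{eq:def_Phi_H} (product over $\dual G$ rather than over $\dual{(k_v^*/\ker\chi_v')}$) is applied correctly. The remaining manipulations are elementary rearrangements of finite sums.
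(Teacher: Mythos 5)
Your proposal is correct and takes essentially the same route as the paper: use the decomposition \eqref{eq:local_character_direct_sum} to peel off the sum over $\Hom(k_v^*/\OO_v^*,G)$ into the quantity $\tau_{f_v}(\chi_v',x_v)$, observe that tameness (from $(\exp(G),q_v)=1$) forces $\ker\chi_v'=\OO_v^{*m}$ for some $m\mid(\exp(G),q_v-1)$, sort by kernel, and compute $\Phi_G(\chi_v')=q_v^{|G|(1-1/m)}$. The only cosmetic difference is that you count the $|G|(1-1/m)$ ramified characters directly via the $\prod_{h\in\dual G}$ form of \eqref{eq:def_Phi_H}, whereas the paper uses the equivalent $\prod_{\psi\in\dual{(\OO_v^*/\OO_v^{*m})}}\Phi(\psi)^{|G|/m}$ form; both give the same exponent.
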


\begin{proof}
  This is inspired by the proof of \cite[Prop.~4.1]{Wri89}.
  From \eqref{eq:local_character_direct_sum}, we obtain
  \begin{equation*}
    \widehat{f}_v(x_v;s) = \sum_{\chi_v\in\Hom(\OO_v^*, G)}\frac{\pair{\chi_v}{x_v}}{\Phi_G(\chi_v)^s}\tau_{f_v}(\chi_v,x_v).
  \end{equation*}
Let $\chi_v\in\Hom(\OO_v^*, G)$. Since $q_v$ is coprime to $\exp(G)$, we get $1+\pi_v\OO_v\subseteq \ker\chi_v$. Hence, $\chi_v$ factors through the units of the residue field, so $\ker\chi_v = \OO_v^{*m}$ for some divisor $m$ of $(\exp(G),q_v-1)$.
Hence, sorting the $\chi_v$ by their kernels,
\begin{equation*}
  \widehat{f}_v(x_v;s) = \sum_{m\mid (\exp(G),q_v-1)}\sum_{\substack{\chi_v\in\Hom(\OO_v^*, G)\\ \ker\chi_v=\OO_v^{*m}}}\frac{\pair{\chi_v}{x_v}}{\Phi_G(\chi_v)^s}\tau_{f_v}(\chi_v,x_v).
\end{equation*}
For $\chi_v \in \Hom(\OO_v^*, G)$ with $\ker\chi_v=\OO_v^{*m}$, we have
\begin{equation*}
  \Phi_G(\chi_v) = \prod_{\chi \in \dual{(\OO_v^*/\OO_v^{*m})}}\Phi(\chi)^{|G|/m} = q_v^{|G|\frac{m-1}{m}}.  
\end{equation*}
Indeed, the trivial character $\Const$ has $\Phi(\Const) = 1$, and the other $m-1$ characters $\chi$ have conductor $\pi_v$, so $\Phi(\chi) = q_v$.
\end{proof}

We now establish rapid decay for the local Fourier transforms, which will be used to guarantee
the convergence of the Poisson summation formula. Furthermore, we show that if $f$ is
suitably nice, then the local Fourier transforms are often zero. In the next lemma, we let $\alpha(G)$
be as in \eqref{def:alpha}.

\begin{lemma} \label{lem:zero}
	Let $v$ be a non-archimedean place of $k$ such that $(\exp(G),q_v)=1$ and such that $f_v(\Hom(k_v^*/\OO_v^* ,G))=1$.
	Let $\gamma = \max\{|f_v(\chi_v)|\}$ be the maximum taken over all $\chi_v \in \Hom(k_v^* , G)$. Then
	$$\widehat{f}_v(x_v;s) =
	\begin{cases}
		1 + O_\gamma(q_v^{-\alpha(G) s}), & x_v\in \OO_v^*\otimes \dual{G}, \\
		O_\gamma(q_v^{-\alpha(G) s}), & x_v \not \in \OO_v^*\otimes \dual{G}.
	\end{cases}
	$$
	Moreover, if $f_v$ is $\Hom(k_v^*/\OO_v^* , G)$-invariant and $x_v \not \in \OO_v^* \otimes \dual{G}$,
	then $$\widehat{f}_v(x_v;s) = 0.$$
\end{lemma}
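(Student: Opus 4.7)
The plan is to use Lemma \ref{lem:local} and separate the formula into the single ``main'' term with $m=1$ plus a remainder indexed by $m \geq 2$. Since every divisor $m \geq 2$ of $\exp(G)$ is divisible by some prime of $|G|$, the condition $m \mid \exp(G)$ with $m \geq 2$ forces $m \geq Q$, hence $|G|(1-1/m) \geq |G|(1-1/Q) = \alpha(G)$. The number of $\chi_v \in \Hom(\OO_v^*,G)$ with kernel $\OO_v^{*m}$ is at most $|G|$, and $|\tau_{f_v}(\chi_v,x_v)| \leq \gamma$ by the trivial bound on each summand. Thus each $m \geq 2$ term contributes $O_\gamma(q_v^{-\alpha(G)s})$, and it remains to analyse the $m=1$ contribution.

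The $m=1$ term has $\chi_v = \Const$ (so $\Phi_G(\chi_v) = 1$), and using the hypothesis $f_v \equiv 1$ on $\Hom(k_v^*/\OO_v^*,G)$, it reduces to
\begin{equation*}
  \tau_{f_v}(\Const,x_v) = \frac{1}{|G|}\sum_{\psi_v \in \Hom(k_v^*/\OO_v^*,G)} \pair{\psi_v}{x_v}.
\end{equation*}
The key step is to identify the annihilator of $\Hom(k_v^*/\OO_v^*, G)$ inside $k_v^* \otimes \dual{G}$ under the pairing of Section \ref{subsec:top}. The exact sequence $1 \to \OO_v^* \to k_v^* \to k_v^*/\OO_v^* \to 1$ together with \eqref{eq:local_character_direct_sum} produces a short exact sequence
\begin{equation*}
  0 \to \Hom(k_v^*/\OO_v^*,G) \to \Hom(k_v^*,G) \to \Hom(\OO_v^*,G) \to 0,
\end{equation*}
and Pontryagin duality combined with Lemma \ref{lem:tensor} identifies this annihilator with $\OO_v^*\otimes\dual{G}$. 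By orthogonality, the sum equals $|G|$ when $x_v \in \OO_v^*\otimes\dual{G}$ and vanishes otherwise, so $\tau_{f_v}(\Const, x_v)$ equals $1$ or $0$ in the two cases, yielding the stated asymptotic.

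For the final assertion, assume $f_v$ is $\Hom(k_v^*/\OO_v^*,G)$-invariant. For any $\chi_v \in \Hom(\OO_v^*,G)$ one may factor out $f_v(\chi_v)$ from $\tau_{f_v}(\chi_v,x_v)$ to obtain
\begin{equation*}
  \tau_{f_v}(\chi_v,x_v) = \frac{f_v(\chi_v)}{|G|}\sum_{\psi_v \in \Hom(k_v^*/\OO_v^*,G)}\pair{\psi_v}{x_v}.
\end{equation*}
If $x_v \notin \OO_v^*\otimes\dual{G}$, the same orthogonality argument forces this sum to vanish for every $\chi_v$, so every term in Lemma \ref{lem:local} is zero and $\widehat{f}_v(x_v;s)=0$. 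The only non-routine input is the annihilator identification in the second paragraph, which is where the topological setup of Section \ref{subsec:top} is essential; everything else is a bookkeeping exercise on Lemma \ref{lem:local}.
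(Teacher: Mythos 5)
Your proposal is correct and takes essentially the same approach as the paper: separate the $m=1$ term of Lemma \ref{lem:local} from the $m\geq 2$ terms (which are $O_\gamma(q_v^{-\alpha(G)s})$ since any divisor $m\geq 2$ of $\exp(G)$ satisfies $m\geq Q$), and then apply character orthogonality to $\tau_{f_v}(\Const_v,x_v)$, and to $\tau_{f_v}(\chi_v,x_v)$ in the invariant case. The paper compresses the orthogonality step to a single phrase, where you spell out the annihilator identification $(\Hom(k_v^*/\OO_v^*,G))^\perp \cong \OO_v^*\otimes\dual{G}$ via Lemma \ref{lem:tensor}; this is the same argument, just unpacked.
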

\begin{proof}
	For $x_v\in \OO_v^*\otimes \dual{G}$, the result follows easily from the calculation of the lower
	order terms in Lemma \ref{lem:local}. Henceforth, let $x_v \notin \OO_v^*\otimes \dual{G}$. Then by character orthogonality
	we have $$\tau_{f_v}(\Const_v, x_v) = 0,$$
	where $\Const_v: \OO_v^* \to G$ denotes the trivial homomorphism.
	The first part of the lemma now follows easily from Lemma \ref{lem:local}. Under the additional assumption
	that $f_v$ is $\Hom(k_v^*/\OO_v^* , G)$-invariant, we similarly obtain
	$$\tau_{f_v}(\chi_v, x_v) = 0$$
	for all $\chi_v \in \Hom(\OO_v^*, G)$. The result then follows from Lemma \ref{lem:local}.
\end{proof}

We now show the existence of the global Fourier transform and an Euler product formula under
suitable assumptions.

\begin{lemma} \label{lem:Euler_product}
	Assume that there exists some $\gamma>0$ such that $|f_v(\chi_v)| \leq \gamma$ for all places $v$ 
	and all $\chi_v \in \Hom(k_v^*,G)$. Then, in the half plane $\re s > 1/\alpha(G)$, the global Fourier
	transform $\widehat{f}(\cdot;s)$ of $f\Phi_G^{-s}$ defines a holomorphic function
and
	$$\widehat{f}((x_v)_v;s) = \prod_v \widehat{f}_v(x_v;s), \qquad \re s > 1/\alpha(G).$$
\end{lemma}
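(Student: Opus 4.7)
The plan is to combine the restricted product structure of $\Hom(\Adele^*, G)$ with the decay estimates of Lemma~\ref{lem:zero} to obtain absolute convergence of the Euler product, from which both the product formula and the holomorphicity will follow. At each place $v$, the group $\Hom(k_v^*, G)$ is finite (every $\chi_v$ factors through the finite quotient $k_v^*/k_v^{*|G|}$ by local class field theory), so the local Fourier transform $\widehat{f}_v(x_v; s)$ is a finite sum and extends to an entire function of $s$ that is bounded by a constant depending only on $v$ and $\gamma$.

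Fix $x = (x_v)_v \in \Adele^* \otimes \dual{G}$; by the restricted direct product description, $x_v \in \OO_v^* \otimes \dual{G}$ for all but finitely many $v$. Together with the standing assumption that $f_v \equiv 1$ on $\Hom(k_v^*/\OO_v^*, G)$ for all but finitely many $v$, Lemma~\ref{lem:zero} applies at almost all places and yields
\[ \widehat{f}_v(x_v; s) = 1 + O_\gamma(q_v^{-\alpha(G)s}). \]
Since $\sum_{v}q_v^{-\alpha(G)\re s}$ converges exactly for $\re s > 1/\alpha(G)$ by the standard prime bound for number fields, the Euler product $\prod_v \widehat{f}_v(x_v; s)$ converges absolutely and locally uniformly in $s$ on this half-plane, and hence defines a holomorphic function by Weierstrass' theorem. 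The main obstacle is precisely this absolute convergence with the correct exponent $1/\alpha(G)$; once that is in hand, the rest is essentially formal.

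To prove the product formula I would exchange the global integral and the Euler product via Fubini applied to the restricted product~\eqref{def:restricted_product}. Choose a finite set $S$ of places containing the archimedean ones and all $v$ at which either $(\exp(G), q_v) \neq 1$, $f_v \not\equiv 1$ on $\Hom(k_v^*/\OO_v^*, G)$, or $x_v \not\in \OO_v^* \otimes \dual{G}$. For any finite $S' \supseteq S$, the open subgroup $\Hom(\Adele^*, G)(S')$ has finite Haar measure under the product normalisation of Section~\ref{sec:measures}, and since $f$, $\Phi_G$ and the pairing $\pair{\cdot}{\cdot}$ all factor over $v$, Fubini gives
\[ \int_{\Hom(\Adele^*, G)(S')} \frac{f(\chi)\pair{\chi}{x}}{\Phi_G(\chi)^s}\,d\chi = \prod_{v \in S'} \widehat{f}_v(x_v; s) \cdot \prod_{v \notin S'} \int_{\Hom(k_v^*/\OO_v^*, G)} \pair{\chi_v}{x_v}\,d\chi_v. \]
For $v \notin S' \supseteq S$, the inner integral equals $1$: the character $\chi_v \mapsto \pair{\chi_v}{x_v}$ is trivial on $\Hom(k_v^*/\OO_v^*, G)$ precisely because $x_v \in \OO_v^* \otimes \dual{G}$ (by the construction of the pairing in Section~\ref{subsec:top}), and such unramified $\chi_v$ satisfy $\Phi_G(\chi_v) = 1$ and $f_v(\chi_v) = 1$ by choice of $S$. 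Letting $S' \to \Omega_k$ exhausts $\Hom(\Adele^*, G)$ by the open subgroups $\Hom(\Adele^*, G)(S')$, and absolute convergence (established using the same bounds, with $|f_v(\chi_v)| \leq \gamma$ in place of $f_v(\chi_v)$ to justify dominated convergence of the integrals) yields $\widehat{f}(x; s) = \prod_v \widehat{f}_v(x_v; s)$, completing the proof.
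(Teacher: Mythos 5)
Your proof is correct and takes essentially the same approach as the paper: the paper's proof is two sentences, invoking Lemma \ref{lem:zero} for the decay $\widehat{f}_v(x_v;s) = \mathbbm{1}_{\{x_v\in\OO_v^*\otimes\dual{G}\}} + O_\gamma(q_v^{-\alpha(G)s})$ and then deferring to a standard reference for the Euler product argument on a restricted direct product. You have simply written out in full the Fubini--exhaustion argument over $\Hom(\Adele^*,G)(S')$ that the paper delegates to \cite[Lem.~1.4]{Mor05}.
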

\begin{proof}
	It follows easily from Lemma \ref{lem:zero} that the above infinite product is absolutely
	convergent for $\re s > 1/\alpha(G)$. The result then follows from a
	standard argument (see e.g. \cite[Lem.~1.4]{Mor05}).
\end{proof}


\subsection{Poisson summation}

The main result of this section is a version of Poisson summation
applied to the conductor series. We begin with an algebraic lemma.
For $x$ in $k^*\otimes \dual{G}$, we denote its image in $k_v^*\otimes \dual{G}$ by $x_v$.

\begin{lemma} \label{lem:U}
	Let $S$ be a finite set of places of $k$ and let
	\begin{equation} \label{def:U_S(G)}
		\mathcal{U}_S(G) = \{x \in k^*\otimes \dual{G} : x_v \in \OO_v^*\otimes \dual{G} \ \forall v \notin S\}.
	\end{equation}
	Then there is an exact sequence of abelian groups
	$$ 1 \to \OO_S^* \otimes \dual{G} \to \mathcal{U}_S(G) \to \Tor( \Pic \OO_S, \dual{G}) \to 1.$$
	In particular, $\mathcal{U}_S(G)$ is finite.
\end{lemma}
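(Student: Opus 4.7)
The plan is to build the claimed short exact sequence by combining the two standard short exact sequences
\[
1 \to \OO_S^* \to k^* \to P(\OO_S) \to 1, \qquad 1 \to P(\OO_S) \to I(\OO_S) \to \Pic(\OO_S) \to 1,
\]
with their Tor long exact sequences after tensoring with $\dual{G}$. The group $I(\OO_S)=\bigoplus_{v\notin S}\ZZ\cdot[\fp_v]$ is free abelian, and $P(\OO_S)$ is a subgroup of $I(\OO_S)$, hence also free abelian. Therefore $\Tor(P(\OO_S),\dual{G})=\Tor(I(\OO_S),\dual{G})=0$, and tensoring with $\dual{G}$ produces a short exact sequence
\[
0 \to \OO_S^*\otimes \dual{G} \to k^*\otimes \dual{G} \to P(\OO_S)\otimes \dual{G} \to 0
\]
together with an injection $0 \to \Tor(\Pic \OO_S,\dual{G}) \to P(\OO_S)\otimes \dual{G} \to I(\OO_S)\otimes \dual{G}$.

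Next I would reinterpret the local conditions defining $\mathcal{U}_S(G)$. For each finite place $v\notin S$, the sequence $1\to \OO_v^*\to k_v^*\xrightarrow{\ord_v}\ZZ\to 0$ is split (after choosing a uniformiser) or at least has free quotient, so tensoring with $\dual{G}$ yields a short exact sequence $0\to \OO_v^*\otimes \dual{G}\to k_v^*\otimes \dual{G}\to \dual{G}\to 0$. Hence $x_v\in \OO_v^*\otimes \dual{G}$ is equivalent to the vanishing of the image of $x$ under
\[
k^*\otimes \dual{G} \longrightarrow k_v^*\otimes \dual{G} \xrightarrow{\ord_v\otimes \mathrm{id}} \dual{G}.
\]
Taking the product of these maps over $v\notin S$ recovers exactly the composition $k^*\otimes \dual{G}\to P(\OO_S)\otimes \dual{G}\to I(\OO_S)\otimes \dual{G}$ induced by the divisor map $k^*\to I(\OO_S)$. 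Consequently $\mathcal{U}_S(G)$ is the kernel of $k^*\otimes \dual{G}\to I(\OO_S)\otimes \dual{G}$.

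A diagram chase through the two short exact sequences above then identifies this kernel with the preimage of $\Tor(\Pic \OO_S,\dual{G})\subseteq P(\OO_S)\otimes \dual{G}$ under the surjection $k^*\otimes \dual{G}\to P(\OO_S)\otimes \dual{G}$, giving the desired short exact sequence
\[
1 \to \OO_S^*\otimes \dual{G} \to \mathcal{U}_S(G) \to \Tor(\Pic \OO_S,\dual{G}) \to 1.
\]
Finiteness of $\mathcal{U}_S(G)$ follows at once: $\Pic \OO_S$ is finite, so $\Tor(\Pic \OO_S,\dual{G})$ is finite; by Dirichlet's $S$-unit theorem $\OO_S^*$ is finitely generated, so tensoring with the finite group $\dual{G}$ yields a finite group. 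The only point that needs a little care is justifying that $P(\OO_S)$ is free abelian, but this is immediate from its embedding into the free abelian group $I(\OO_S)$; no serious obstacle is anticipated.
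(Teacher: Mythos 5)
Your proposal is correct and follows essentially the same strategy as the paper's proof, which assembles a commutative diagram from the short exact sequences $1\to\OO_S^*\to k^*\to P(\OO_S)\to 1$ and $1\to P(\OO_S)\to I(\OO_S)\to\Pic\OO_S\to 1$, uses $\Tor(P(\OO_S),\dual{G})=\Tor(I(\OO_S),\dual{G})=0$, and performs a diagram chase after identifying $\mathcal{U}_S(G)$ with $\ker\bigl(k^*\otimes\dual{G}\to I(\OO_S)\otimes\dual{G}\bigr)$. The only cosmetic difference is that the paper's diagram also records the parallel idelic sequence $0\to\Adele_S^*\otimes\dual{G}\to\Adele^*\otimes\dual{G}\to I(\OO_S)\otimes\dual{G}\to 0$, whereas you recover the membership condition $x_v\in\OO_v^*\otimes\dual{G}$ directly via the local valuation maps $\ord_v\otimes\mathrm{id}$; both routes identify the same kernel. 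One tiny simplification: to see $\Tor(P(\OO_S),\dual{G})=0$ you do not need $P(\OO_S)$ to be free, only torsion-free, which is immediate from its embedding in the torsion-free group $I(\OO_S)$.
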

\begin{proof}
	 Denote by $I(\OO_S)$ the group of non-zero fractional ideals of $\OO_S$, and denote by $P(\OO_S)$ the subgroup generated by the principal ideals. Note that $\Tor(I(\OO_S),G)=\Tor(P(\OO_S),G)=0$.	Therefore, we have the following commutative diagram of abelian groups
		$$
	\xymatrix{
	0 \ar[r] & \Adele^*_S \otimes \dual{G}
	\ar[r] & \Adele^*\otimes\dual{G}\ar[r]& I(\OO_S) \otimes \dual{G} \ar[r] & 0 \\
	0 \ar[r] &\OO_S^* \otimes \dual{G}\ar[r] \ar[u] & k^*\otimes\dual{G}  \ar[r] \ar[u] 
	& P(\OO_S) \otimes \dual{G} \ar[u]\ar[r] & 0 \\
	 &&  & \Tor( \Pic \OO_S, \dual{G})\ar[u] &  \\
	 &&  & 0 \ar[u] & \\
	}$$
with exact rows and columns.  The result then follows from a simple diagram chase.
\end{proof}

The following lemma will be used to show convergence of the Poisson sum.

\begin{lemma} \label{lem:k*_sum}
	Let $g_v: k_v^*\otimes \dual{G} \to \CC$ for each place $v$ of $k$. Assume that there exists some $\sigma > 1$
	such that 
	$$g_v(x_v) =
	\begin{cases}
		1 + O(q_v^{-\sigma}), & x_v\in \OO_v^*\otimes \dual{G}, \\
		O(q_v^{-\sigma}), & x_v \not \in \OO_v^*\otimes \dual{G}.
	\end{cases}
	$$
	Then $g = \prod_v g_v: \Adele^*\otimes \dual{G} \to \CC$ is well defined and the sum
	$$\sum_{x \in k^*\otimes \dual{G}} g(x)$$
	is absolutely convergent.
\end{lemma}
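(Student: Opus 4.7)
The plan is to decompose the sum according to the finite set of ``bad places'' of each $x\in k^*\otimes\dual{G}$, bound the resulting contribution using Lemma \ref{lem:U}, and then recognise the final sum as an Euler product that converges precisely because $\sigma>1$.

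First I would verify that $g$ is well-defined. Any $y\in\Adele^*\otimes\dual{G}$ is a finite sum of pure tensors $a^{(i)}\otimes\chi_i$ with $a^{(i)}\in\Adele^*$, and since each $a^{(i)}$ lies in $\OO_v^*$ for almost all $v$, we get $y_v\in\OO_v^*\otimes\dual{G}$ for almost all $v$. The hypothesis then gives $|g_v(y_v)-1|=O(q_v^{-\sigma})$ for almost all $v$, and since $\sum_v q_v^{-\sigma}<\infty$ (as $\sigma>1$), the product $\prod_v g_v(y_v)$ converges absolutely. Next, for each $x\in k^*\otimes\dual{G}$, the same pure-tensor argument applied to a representative in $k^*$ shows that
\begin{equation*}
S(x):=\{v\in\Omega_k\setminus\Omega_\infty : x_v\notin\OO_v^*\otimes\dual{G}\}
\end{equation*}
is finite. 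Moreover, absorbing the uniformly convergent product $\prod_v(1+Cq_v^{-\sigma})$ into a constant $C_1$, the hypothesis yields
\begin{equation*}
|g(x)|\leq C_1\prod_{v\in S(x)}q_v^{-\sigma}.
\end{equation*}

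I would then partition $k^*\otimes\dual{G}$ by the value of $S(x)$. For each finite $S\subset\Omega_k\setminus\Omega_\infty$, the set $\{x:S(x)=S\}$ is contained in $\mathcal{U}_{S\cup\Omega_\infty}(G)$, so
\begin{equation*}
\sum_{x\in k^*\otimes\dual{G}}|g(x)|\leq C_1\sum_{S\subset\Omega_k\setminus\Omega_\infty \text{ finite}}|\mathcal{U}_{S\cup\Omega_\infty}(G)|\prod_{v\in S}q_v^{-\sigma}.
\end{equation*}
The main step is now to bound $|\mathcal{U}_{S\cup\Omega_\infty}(G)|$ geometrically in $|S|$. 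Lemma \ref{lem:U} gives an exact sequence whose right-hand term $\Tor(\Pic\OO_{S\cup\Omega_\infty},\dual{G})$ is a quotient of a torsion subgroup of $\Pic\OO_k$ (since $\Pic\OO_{S\cup\Omega_\infty}$ is a quotient of the finite group $\Pic\OO_k$), hence is bounded uniformly in $S$. For the left-hand term, Dirichlet's unit theorem shows $\OO_{S\cup\Omega_\infty}^*$ is a finitely generated abelian group of rank $|S|+r_1+r_2-1$, so $|\OO_{S\cup\Omega_\infty}^*\otimes\dual{G}|\leq C_2|\dual{G}|^{|S|}$ for a constant $C_2$ depending only on $k$ and $G$.

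Combining these bounds yields
\begin{equation*}
\sum_{x\in k^*\otimes\dual{G}}|g(x)|\leq C_3\sum_{S\text{ finite}}\prod_{v\in S}\bigl(|G|\,q_v^{-\sigma}\bigr)=C_3\prod_v\bigl(1+|G|\,q_v^{-\sigma}\bigr),
\end{equation*}
and this last Euler product converges absolutely because $\sigma>1$. I expect the only genuine difficulty to be the geometric-in-$|S|$ bound on $|\mathcal{U}_{S\cup\Omega_\infty}(G)|$; everything else is a routine application of the restricted-product structure of the adeles and of the hypothesis on $g_v$. The role of $\sigma>1$ is precisely to offset the growth factor $|G|^{|S|}$ coming from the $S$-unit rank.
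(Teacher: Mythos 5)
Your proof is correct, but it takes a genuinely different route from the paper's. The paper fixes a \emph{single} finite set $S$ of places with $\Pic\OO_S=0$, decomposes $k^*\otimes\dual{G}$ into cosets of the finite group $\OO_S^*\otimes\dual{G}$ indexed by $I(\OO_S)\otimes\dual{G}$, and then rewrites the sum as an explicit multiplicative-function estimate over integral ideals of $\OO_S$ (the function $\rho$ with $\rho(\fp)=M^l-1$). You instead stratify by the ``support'' set $S(x)=\{v:x_v\notin\OO_v^*\otimes\dual{G}\}$, bound each stratum uniformly in $|S|$ via Lemma \ref{lem:U} --- which, interestingly, the paper states immediately before Lemma \ref{lem:k*_sum} but does not actually invoke in its own proof of it --- and recover an Euler product by summing over all finite $S$. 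Both approaches ultimately hinge on the same two facts: that the $S$-unit rank grows linearly in $|S|$ (Dirichlet) and that $\sigma>1$ is enough slack to offset the resulting $|G|^{|S|}$ factor. Your version is arguably cleaner and makes the role of $\sigma>1$ more transparent, at the cost of needing the uniform-in-$S$ bound on $|\mathcal{U}_{S\cup\Omega_\infty}(G)|$; the paper's version is more hands-on and avoids that uniformity by working over one fixed $S$. One small slip worth fixing: the $O(q_v^{-\sigma})$ hypothesis carries an implied constant $C$, so after absorbing the benign product $\prod_v(1+Cq_v^{-\sigma})$, the surviving per-place bound for $v\in S(x)$ is $Cq_v^{-\sigma}$, not $q_v^{-\sigma}$. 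This merely changes your final Euler product to $\prod_v\bigl(1+C\,|G|\,q_v^{-\sigma}\bigr)$, which still converges for $\sigma>1$, so the conclusion stands.
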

\begin{proof}
	That $g$ is well defined is clear. Choose some finite set of places $S$ such that $\Pic \OO_S = 0$.
	Let $I(\OO_S)$ denote the group of non-zero fractional ideals of $\OO_S$.
	Then, since $\Tor(I(\OO_S),\dual{G})=0$, we have the following short exact sequence
	$$ 1 \to \OO_S^* \otimes \dual{G} \to k^*\otimes\dual{G} \to I(\OO_S) \otimes \dual{G} \to 1.$$
	Let $\mathcal{R}$ be a system of representatives for the cosets of $\OO_S^*\otimes \dual{G}$ in $k^*\otimes \dual{G}$. 
	This allows us to write
	$$
		\sum_{x \in k^*\otimes \dual{G}} |g(x)| = \sum_{u \in \OO_S^* \otimes \dual{G}}\sum_{r \in \mathcal{R}} |g(ur)|.
	$$
	It now follows from Dirichlet's $S$-unit theorem and our assumptions on $g$ that there
	exists some constant $\gamma> 0$ and some $0 < \varepsilon < \sigma -1$ such that
	\begin{equation} \label{eqn:gamma}
	\sum_{x \in k^*\otimes\dual{G}} |g(x)| \ll 
	\sum_{r \in \mathcal{R}} 
	\prod_{\substack{v\notin S \\ r_v \in\OO_v^*\otimes \dual{G}}}\left( 1 + \frac{\gamma}{q_v^{\sigma}}\right)
	\prod_{\substack{v\notin S \\ r_v\notin\OO_v^*\otimes \dual{G}}}\frac{\gamma}{q_v^{\sigma}}
	\ll
	\sum_{r \in \mathcal{R}} \prod_{\substack{v\notin S \\ r_v\notin\OO_v^*\otimes \dual{G}}}\frac{1}{q_v^{\sigma-\varepsilon}}.
	\end{equation}
	To proceed, choose a presentation 
	$$\dual{G} \cong \ZZ/n_1\ZZ \oplus \cdots \oplus \ZZ/n_l\ZZ.$$
	This  induces an isomorphism
	\begin{equation*}
	  I(\OO_S) \otimes \dual{G} \cong \bigoplus_{j=1}^l I(\OO_S)/I(\OO_S)^{n_j}. 
	\end{equation*}
	For an ideal $\mathfrak{a}$ of $\OO_S$, let $\rad(\mathfrak{a})$ denote the product of all prime ideals
	of $\OO_S$ which divide $\mathfrak{a}$. Then we may represent each element of $I(\OO_S)\otimes \dual{G}$ by an $l$-tuple 
	$(\mathfrak{a}_1, \ldots, \mathfrak{a}_l)$ of integral ideals of $\OO_S$, 
	such that $\mathfrak{a}_j \mid \rad(\mathfrak{a}_j)^{n_j-1}$ for all $1\leq j \leq l$.
	If $r\in \mathcal{R}$ represents the class of $(\mathfrak{a}_1,\ldots,\mathfrak{a_l})$ in $I(\OO_S)\otimes \dual{G}$, then
	from \eqref{eqn:gamma} we find that 
	\begin{align}	\label{eqn:rho}
	\sum_{x \in k^* \otimes \dual{G}} |g(x)|
	\ll \sum_{\substack{(\mathfrak{a}_1,\ldots,\mathfrak{a}_l)\\\mathfrak{a}_j\mid\rad{\mathfrak{a}_j^{M-1}}}}
	\frac{1}{N(\rad(\mathfrak{a}_1\cdots\mathfrak{a}_l))^{\sigma - \varepsilon}}
	= \sum_{\mathfrak{a}\unlhd\OO_S} \frac{\rho(\mathfrak{a})}{N(\mathfrak{a})^{\sigma - \varepsilon}},
	\end{align}
	where $M = \max\{n_j\}$, $N(\mathfrak{a})$ denotes the absolute norm of the ideal $\mathfrak{a}$, and 
	$$\rho(\mathfrak{a}) = |\mu(\mathfrak{a})|^2
	\#\{(\mathfrak{a}_1,\ldots,\mathfrak{a}_l)\ :\ \mathfrak{a}_j\unlhd \OO_S, \, \mathfrak{a}_j\mid\rad(\mathfrak{a}_j^{M-1}) \, \forall j, \, 	\rad(\mathfrak{a}_1\cdots\mathfrak{a}_l) = \mathfrak{a}\}.$$
By $\mathfrak{a}\unlhd\OO_S$, we mean that $\mathfrak{a}$ is an integral ideal of $\OO_S$.	
	It is easily seen that $\rho$ is multiplicative and that for any prime ideal $\mathfrak{p}$ we have
	$$\rho(\mathfrak{p}) = M^l - 1.$$
	Whence the sum in  \eqref{eqn:rho} converges, as required.
\end{proof}

We now come to the application of Poisson summation.

\begin{proposition} \label{prop:Poisson}
	Let $f=\prod_v f_v: \Hom(\Adele^* , G)\to \CC$ be a product of functions $f_v: \Hom(k_v^*, G) \to \CC$ such that for all but finitely many places $v$ of $k$,
	the function $f_v$ takes only the value $1$ on $\Hom(k_v^*/\OO_v^*,G)$. Assume that there exists some 
	$\gamma>0$ such that $|f_v(\chi_v)| \leq \gamma$ for all places $v$ and all $\chi_v \in \Hom(k_v^*,G)$.
	
	Then for $\re s > 1/\alpha(G)$ the Fourier
	transform $\widehat{f}(\cdot;s)$ of $f\Phi_G^{-s}$ exists and we have the Poisson formula
        \begin{equation}\label{eq:prop_poisson}
	 \sum_{\chi \in \Hom(\Adele^*/k^{*} , G)} \frac{f(\chi)}{\Phi_G(\chi)^{s}}
	=\frac{1}{|\OO_k^*\otimes\dual{G}|} 
	\sum_{x \in k^*\otimes \dual{G}}\widehat{f}(x;s), \quad \re s > 1/\alpha(G).
	     \end{equation}
 Moreover, the right-hand side defines a holomorphic function on $\re s > 1/\alpha(G)$.
 
	Suppose furthermore  that there exists a finite set of places $S$ such that 
	$f_v$ is $\Hom(k_v^*/\OO_v^* , G)$-invariant for all $v \notin S$. Then 
	$\widehat{f}(x;s)$ is zero if $x \notin \mathcal{U}_S(G)$
	(see \eqref{def:U_S(G)}). In particular, the above sum over $x$ is finite.
\end{proposition}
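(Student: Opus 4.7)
The plan is to apply the abstract Poisson summation formula to the pair
$B := \Hom(\Adele^*/k^*, G) \subseteq A := \Hom(\Adele^*, G)$. I would first verify
that $B$ is a discrete subgroup of $A$ and that $A/B$ is compact: discreteness of $B$
follows from strong approximation (for any finite set $S$ with $\Pic \OO_S = 0$, the open
neighbourhood $\prod_{v \in S}\{0\} \times \prod_{v \notin S}\Hom(k_v^*/\OO_v^*, G)$ of the
identity meets $B$ only trivially), and compactness of $A/B$ follows, for instance, from
the compactness of the norm-one idele class group together with the fact that $G$-valued
characters kill the connected component. The existence and holomorphy of $\widehat{f}(\cdot;s)$
together with its Euler product factorisation on $\re s > 1/\alpha(G)$ are already supplied
by Lemma \ref{lem:Euler_product}.

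The main algebraic step is identifying the annihilator $B^\perp \subseteq \dual{A}$.
By Lemma \ref{lem:tensor} we have $\dual{A} \cong \Adele^* \otimes \dual{G}$ and
$\dual{B} \cong (\Adele^*/k^*) \otimes \dual{G}$, and applying the right-exactness of
$-\otimes \dual{G}$ to the idele class sequence identifies the dual of $B \hookrightarrow A$
with the surjection $\Adele^* \otimes \dual{G} \twoheadrightarrow (\Adele^*/k^*) \otimes \dual{G}$.
Hence $B^\perp$ is the image of the natural map $k^*\otimes\dual{G} \to \Adele^*\otimes\dual{G}$.
Crucially, this map has a finite kernel $K$ that can be non-trivial -- exactly Wang's obstruction
to the injectivity of $k^*/k^{*n} \to \Adele^*/\Adele^{*n}$ -- so summation over $k^*\otimes\dual{G}$
counts each element of $B^\perp$ exactly $|K|$ times. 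Finiteness of $K$ is immediate since
$K \subseteq \mathcal{U}_S(G)$ for every $S$, and $\mathcal{U}_S(G)$ is finite by Lemma \ref{lem:U}.

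Abstract Poisson summation then yields
\begin{equation*}
\sum_{\chi \in B} \frac{f(\chi)}{\Phi_G(\chi)^s}
\,=\, \frac{1}{\vol(A/B)}\sum_{\xi \in B^\perp} \widehat{f}(\xi;s)
\,=\, \frac{1}{|K|\vol(A/B)}\sum_{x \in k^*\otimes\dual{G}} \widehat{f}(x;s),
\end{equation*}
so matching the claimed formula reduces to establishing the identity
$|K|\vol(A/B) = |\OO_k^* \otimes \dual{G}|$. I would prove this by unwinding the Haar
measure normalisations of Section \ref{sec:measures} and invoking Dirichlet's $S$-unit
theorem together with the exact sequence from Lemma \ref{lem:U}, for a fixed $S$ with
$\Pic \OO_S = 0$, reducing the identity to a finite computation inside the $S$-idele class group.
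Absolute convergence of the Poisson sum, which is what makes the formal derivation above
rigorous, follows from Lemma \ref{lem:k*_sum} with $\sigma = \alpha(G)\re s > 1$ via the pointwise
bounds of Lemma \ref{lem:zero}; these same estimates yield uniform convergence on compacta
of $\re s > 1/\alpha(G)$, and hence holomorphy of the right-hand side.

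Finally, under the additional hypothesis that $f_v$ is $\Hom(k_v^*/\OO_v^*, G)$-invariant
for $v \notin S$, the Euler product together with the vanishing part of Lemma \ref{lem:zero}
forces $\widehat{f}(x;s) = 0$ unless $x_v \in \OO_v^* \otimes \dual{G}$ for every $v \notin S$,
i.e., unless $x \in \mathcal{U}_S(G)$, which is finite by Lemma \ref{lem:U}. I anticipate that
the main technical obstacle will be the constant identity $|K|\vol(A/B) = |\OO_k^* \otimes \dual{G}|$:
this is the precise point at which Wright's original argument is informal about the failure of
injectivity of $k^*/k^{*n} \to \Adele^*/\Adele^{*n}$, and the clean harmonic-analytic framework
above is what exposes the Wang obstruction (via the factor $|K|$) while simultaneously explaining
why the final formula is nevertheless correct.
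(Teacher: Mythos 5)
Your proposal follows the same overall strategy as the paper: you set up $B := \Hom(\Adele^*/k^*,G)$ inside $A := \Hom(\Adele^*,G)$, identify the annihilator $B^\perp$ as the image of $k^*\otimes\dual{G}$ in $\Adele^*\otimes\dual{G}$, recognise the kernel $K$ of $k^*\otimes\dual{G}\to\Adele^*\otimes\dual{G}$ as the Wang obstruction (the paper writes it as $\Sha(k,\Cdual{G})$ and uses the exact sequence \eqref{eqn:GWW}), and you invoke Lemma \ref{lem:k*_sum} and Lemma \ref{lem:zero} for convergence and for the vanishing of $\widehat f(x;s)$ when $x\notin\mathcal U_S(G)$. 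Up to notation, this is exactly the skeleton of the paper's Steps 1--2. Two points where you diverge from, or fall short of, the paper's argument:

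\textbf{The constant.} You correctly isolate $|K|\vol(A/B)=|\OO_k^*\otimes\dual{G}|$ as the crux, but you only sketch a direct volume computation ``unwinding Haar measures... and invoking Dirichlet's $S$-unit theorem.'' The paper instead avoids any volume computation entirely by \emph{applying the Poisson formula itself to a test function}: taking each $f_v$ to be the indicator of $\Hom(k_v^*/\OO_v^*,G)$, the left-hand side of \eqref{eq:prop_poisson} becomes $|\Hom(\Pic\OO_k,G)|$ (kernel of $\Hom(\Adele^*/k^*,G)\to\bigoplus_v\Hom(\OO_v^*,G)$), while the right-hand side becomes $|\OO_k^*\otimes\dual G|\cdot|\Tor(\Pic\OO_k,\dual G)|$ by Lemma \ref{lem:U}; comparing gives $\omega$ immediately. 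This is substantially cleaner than a direct measure computation. A direct computation of $\vol(A/B)$ is in principle possible, but you would need to pin down precisely what $A/B$ looks like (it injects into the compact group $\Hom(k^*,G)$ but is not all of it, again a Grunwald--Wang issue), and you have not addressed how to do this. As written, this step of your proof is incomplete.

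\textbf{The hypotheses of Poisson summation.} The Bourbaki version used in the paper requires not only the absolute summability of $\widehat f(\cdot;s)$ over the dual lattice (your Condition via Lemma \ref{lem:k*_sum}), but also that $a\mapsto\sum_{\chi\in B}|f(\chi a)|\,\Phi_G(\chi a)^{-\re s}$ converges and is \emph{continuous} on $A$. The paper's Step 2 verifies this with a dominated-convergence argument using a majorant $\rho$ constructed from the bound $|f_v|\leq\gamma$. Your proposal does not address this continuity hypothesis; depending on which abstract Poisson summation result you invoke, this may be a genuine gap.

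Aside from these two items your argument is correct and mirrors the paper; in particular your identification of $K$ with the Grunwald--Wang kernel, and your derivation of the final finiteness statement from the invariance hypothesis and Lemmas \ref{lem:zero} and \ref{lem:U}, are exactly what the paper does.
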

\begin{proof}
	\emph{Step 1: Topological considerations}.
	We let 
	$A=\Hom(\Adele^* , G)$, we let \mbox{$B = \Hom(\Adele^*/k^{*} , G)$},
	and set $C= A/B$. The topology on $B$ induced from $A$ coincides with the compact-open topology and is discrete. In particular, $B$ is a closed subgroup of $A$.
We equip $B$ with the counting measure $\mathrm{d}b$.
	Next, we have the following exact sequence of locally compact abelian groups, where each morphism is open onto its image
	\begin{equation} \label{eqn:GWW}
		1 \to \Sha(k, \Cdual{G}) \to k^*\otimes \dual{G}  \to \Adele^*\otimes \dual{G}
		\to \Adele^*/k^*\otimes \dual{G} \to 1.
	\end{equation}
Here $\Cdual{G}=\Hom(\dual{G}, \Gm)$ denotes the Cartier dual of $\dual{G}$. To check our claims, we may assume that $G=\mu_n$. In this case, the result follows from the canonical topological isomorphism $X \otimes \ZZ/n\ZZ \cong X/nX$, which holds for every locally compact abelian group $X$ for which $nX$ is a closed subgroup (see Section \ref{subsec:top}), together with Kummer theory, which identifies the kernel of $k^*/k^{*n}\to \Adele^*/\Adele^{*n}$ with  $\Sha(k, \mu_n)$. Next, it follows from \cite[Thm.~9.1.3]{NSW00}
	that $\Sha(k, \Cdual{G})$ is a finite elementary abelian $2$-group. In particular, 
        \begin{equation}\label{eq:cdual}
	\dual{C} \cong (k^*\otimes \dual{G})/\Sha(k,\Cdual{G})
      \end{equation}
 is discrete. We equip $\dual{C}$ with the dual of the quotient measure on $C$
	induced by $\mathrm{d}\chi$ and $\mathrm{d}b$; this is some constant
	$\omega$ times the counting measure on $\dual{C}$.	
	
	\emph{Step 2: Poisson summation}.
	We shall use the version of Poisson summation given in the Corollary of
	\cite[\S II.8, p.~127]{Bou67}.  Note that the existence of the Fourier 
	transform $\widehat{f}(\cdot;s)$ for $\re s > 1/\alpha(G)$ follows from
	Lemma \ref{lem:Euler_product}.
	In order to apply Poisson summation, it suffices to show that
	\begin{enumerate}
		\item The sum \label{cond:1}
		$$
		\sum_{x \in k^*\otimes \dual{G}}|\widehat{f}(x;s)|
		$$
		exists for $\re s > 1/\alpha(G)$.
		\item  For $\re s > 1/\alpha(G)$, the sum \label{cond:2}
		$$\sum_{\chi \in \Hom(\Adele^*/k^{*} , G)} \frac{|f(\chi a)|}{|\Phi_G(\chi a)^{s}|}$$
		exists for all $a \in \Hom(\Adele^* , G)$ and defines a continuous function on $\Hom(\Adele^* , G)$.

	\end{enumerate}
	In light of Lemma \ref{lem:zero}, the validity of Condition \eqref{cond:1} follows from
	Lemma~\ref{lem:k*_sum}, which also shows that the right-hand side in \eqref{eq:prop_poisson} defines a holomorphic function on $\re s > 1/\alpha(G)$. As for Condition \eqref{cond:2}, let $S$ be a finite set of places of $k$
	containing those places $v$ with $f_v(\Hom(k_v^*/\OO_v^* , G)) \neq \{1\}$ and large enough 
	such that $\Pic \OO_S = 0$.  Then $\Adele^*/k^* \cong \Adele^*_S / \OO_S^*$. We define local functions \mbox{$\rho_v : \Hom(k_v^*,G)\to \CC$} by
	$$\rho_v(\chi_v) = 
	\begin{cases}
		1 , & \chi_v \in \Hom(k_v^*/\OO_v^*,G),\\
		\gamma, & \chi_v \not\in \Hom(k_v^*/\OO_v^*,G),
	\end{cases}
	$$
	for $v \not \in S$ and $\rho_v(\chi_v) = \gamma$ for $v \in S$. Recall that, by continuity,
        \begin{equation*}
          \Hom(\Adele_S^*,G) = \bigoplus_{v\in S}\Hom(k_v^*,G)\oplus \bigoplus_{v\notin S}\Hom(\OO_v^*,G).
        \end{equation*}
Since $\rho_v$ is $\Hom( k_v^*/\OO_v^*, G)$-invariant for all $v$, this allows us to define the function 
	$\rho := \prod_v \rho_v: \Hom(\Adele^*_S , G)\to \CC$.
	We also use $\rho$ to denote the pullback of $\rho$ to $\Hom(\Adele^*,G)$, which dominates $f$. Since $\Hom(\Adele^*,G)$ is second-countable, it is enough to show continuity in terms of sequences. Let $(a_n)_n$ be a sequence  in $\Hom(\Adele^*,G)$ with limit $a$, and assume without loss of generality that 
\begin{equation*}
  a_n \in a\cdot \prod_{v\in S}\{\Const_{k_v^*}\}\prod_{v\notin S}\Hom(k_v^*/\OO_v^*,G)
\end{equation*}
holds for all $n\in \NN$. Then
\begin{equation*}
  \left|\frac{f(\chi a_n)}{\Phi_G(\chi a_n)^s} \right| \leq \frac{\rho(\chi a_n)}{\Phi_G(\chi a_n)^{\re s}} = \frac{\rho(\chi a)}{\Phi_G(\chi a)^{\re s}}
\end{equation*}
holds for all $\chi \in \Hom(\Adele^*/k^*,G)$. Moreover, 
	\begin{align*}
		& \sum_{\chi \in \Hom(\Adele^*/k^{*} , G)} \frac{\rho(\chi a)}{\Phi_G(\chi a)^{\re s}}
		= \sum_{\chi \in \Hom(\Adele^*_S/\OO_S^{*} , G)} \frac{\rho(\chi a)}{\Phi_G(\chi a)^{\re s}} \\		
		&\ll  \sum_{\chi \in \Hom(\Adele^*_S , G)} \frac{\rho(\chi a)}{\Phi_G(\chi a)^{\re s}} 
		= \sum_{\chi \in \Hom(\Adele^*_S , G)} \frac{\rho(\chi)}{\Phi_G(\chi)^{\re s}} \\
		& =  \prod_{v \in S} \sum_{\chi_v \in \Hom(k_v^* , G)} \frac{\rho_v(\chi_v)}{\Phi_G(\chi_v)^{\re s}} 
		\prod_{v \notin S} \sum_{\chi_v \in \Hom(\OO_v^* , G)} \frac{\rho_v(\chi_v)}{\Phi_G(\chi_v)^{\re s}}	\\
		& =  \prod_{v \in S} \sum_{\chi_v \in \Hom(k_v^* , G)} \frac{\rho_v(\chi_v)}{\Phi_G(\chi_v)^{\re s}} 
		\prod_{v \notin S} \widehat{\rho}_v(\Const_v; \re s).
	\end{align*}
The absolute convergence of this for $\re s > 1/\alpha(G)$  follows from the bounds given in Lemma \ref{lem:zero}. Continuity then follows from Lebesgue's dominated convergence theorem,  and hence Condition \eqref{cond:2} holds. Using \eqref{eq:cdual}, Poisson summation yields
	\begin{equation} \label{eqn:Poisson}
	\sum_{\chi \in \Hom(\Adele^*/k^{*} , G)} \frac{f(\chi)}{\Phi_G(\chi)^{s}}
	= \omega\sum_{x \in \dual{C}}\widehat{f}(x;s)  = \frac{\omega}{|\Sha(k, \Cdual{G})|}
	\sum_{x \in k^* \otimes \dual{G}}\widehat{f}(x;s).
	\end{equation}
	
	\emph{Step 3: Calculation of $\omega$}.
	To complete the proof of \eqref{eq:prop_poisson}, it suffices to show
	\begin{equation} \label{eqn:omega}
		\omega = \frac{|\Sha(k, \Cdual{G})|}{|\OO_k^*\otimes \dual{G}|}.
	\end{equation}
	We shall do this by applying the Poisson formula to a special choice of $f$.
	For each place $v$ of $k$, let $f_v$ be the
	indicator function of $\Hom(k_v^*/\OO_v^* , G)$.
	Then
	\begin{align*}
		\sum_{\chi \in \Hom(\Adele^*/k^{*} , G)} \frac{f(\chi)}{\Phi_G(\chi)^{s}}
		&= \# \{\chi \in \Hom(\Adele^*/k^{*} , G) : 
		\chi_v \in \Hom(k_v^*/\OO_v^* , G) \, \forall v \}\\
		&= \# \ker\left( \Hom(\Adele^*/k^* , G) \to \bigoplus_{v}\Hom(\OO_v^* , G)\right).
	\end{align*}
	Consider the exact sequence of locally compact abelian groups
	$$ \prod_{v} \OO_v^* \to \Adele^*/k^* \to \Pic \OO_k \to 1.$$
	From this we obtain the exact sequence
	$$ 1 \to \Hom(\Pic \OO_k, G) \to \Hom(\Adele^*/k^*,G) \to  \bigoplus_{v}\Hom(\OO_v^* , G),$$
	thus
	\begin{equation} \label{eqn:1}
	\sum_{\chi \in \Hom(\Adele^*/k^{*} , G)} \frac{f(\chi)}{\Phi_G(\chi)^{s}} = |\Hom(\Pic \OO_k, G)|.
	\end{equation}
	Next,  we have
	\begin{align*} \label{eqn:card}
	\sum_{x \in k^*\otimes \dual{G}}\widehat{f}(x;s) 
	&= \#\{x \in k^*\otimes \dual{G} : x_v \in \OO_v^*\otimes \dual{G} \, \forall v\} 
	\\ &= |\OO_k^*\otimes\dual{G}| \cdot |\Tor( \Pic \OO_k, \dual{G})|,
	\end{align*}
	where the second equality follows from Lemma \ref{lem:U}.
	Combine this with \eqref{eqn:Poisson} and \eqref{eqn:1} to obtain \eqref{eqn:omega}.
	The rest of the proposition follows from Lemma \ref{lem:zero} and Lemma \ref{lem:U}.
\end{proof}

\begin{remark} \label{rem:Wright}
	In Wright's setting (which corresponds to $f = \Const$), 
	he was in the second case of Proposition \ref{prop:Poisson}.
	He used character orthogonality to obtain
	a finite sum in \cite[\S3]{Wri89} (see in particular \cite[(3.2)]{Wri89}).
	For more general applications, however, 
	it may happen that the right hand sum in Proposition \ref{prop:Poisson} is infinite
	(this occurs for example in the proof of Theorem \ref{thm:conditions}).
	Notice also the consideration of the factor $\Sha(k, \Cdual{G})$ in the proof of Proposition~\ref{prop:Poisson},
	which was overlooked in Wright's work \cite[Lem.~3.1]{Wri89}.
	Lemma \ref{lem:U} may be viewed as a corrected version of \cite[Lem.~3.1]{Wri89}.
\end{remark}

\section{Proof of Theorem \ref{thm:conditions}} \label{sec:conditions}

\subsection{Set-up} \label{sec:set-up}
For any place $v$ of $k$ and any sub-$G$-extension $\varphi_v$ of $k_v$, let
\begin{equation*}
  f_v(\varphi_v) :=
  \begin{cases}
    1&\text{ if }\varphi_v \in \Lambda_v\\
    0&\text{ otherwise,}
  \end{cases}
\end{equation*}
where the set $\Lambda_v$ is as in Theorem \ref{thm:conditions}. For any sub-$G$-extension $\varphi$ of $k$, let $(\varphi_v)_v$ be the induced local sub-$G$-extensions, and define
\begin{equation*}
  f(\varphi):= \prod_{v}f_v(\varphi_v),
\end{equation*}
the product running over all places $v$ of $k$. 

Throughout this section, we use the characterization of a sub-$G$-extension $\varphi$ as a continuous homomorphism $\Adele^*/k^*\to G$, where $G$ is endowed with the discrete topology. The interplay between local and global class field theory shows that for each place $v$, the sub-$G$-extension induced by the natural embedding $k_v^*\to \Adele^*$ is $\varphi_v$. 

Theorem \ref{thm:conditions} will follow from a standard Tauberian theorem once we prove that the Dirichlet series
  $D_{G,f}(s)$ defined in \eqref{eq:discriminant_series} gives a meromorphic function for $\re s > 1/\alpha(G)-\gamma$, for some $\gamma>0$, and has its rightmost pole at $s = 1/\alpha(G)$ of order $\nu(k,G)$. The current section is devoted to the proof of this result, starting with applications of the general techniques from Sections \ref{sec:disc} and \ref{sec:harmonic}. From Lemma \ref{lem:discriminant_to_conductor}, we see that, formally,
\begin{equation}\label{eq:positive_density_delsarte}
  D_{G,f}(s) = \sum_{H\subseteq G}\mu(G/H)F_{H,f}(s|G|/|H|),
\end{equation}
with the conductor series
  $F_{H,f}(s)$
defined as in \eqref{eq:conductor_series}.

We will soon see that $F_{H,f}(s|G|/|H|)$ converges absolutely for $\re s > 1/\alpha(G)$ (which also follows directly from Wright's result). Since the sum over all subgroups $H$ of $G$ in \eqref{eq:positive_density_delsarte} is finite, a meromorphic continuation of the conductor series $F_{H,f}(s)$ will yield the required analytic properties of $D_{G,f}(s)$, except for the possible cancellation of poles. In Subsection \ref{subsec:positive_density_constant}, we will show that, under the assumptions of Theorem \ref{thm:conditions}, no such cancellation occurs.

\subsection{Poisson-summation}
Fix a subgroup $H$ of $G$. We now apply the results of Section \ref{sec:harmonic} to the conductor series $F_{H,f}(s)$.
The assumption \eqref{eqn:conditions} implies that for all but finitely many $v$, the $\Lambda_v$ contain all unramified sub-$G$-extensions of $k_v$. Hence the product  $f(\chi) := \prod_{v}f_v(\chi_v)$ gives a well-defined continuous function $\Hom(\Adele^*, H)\to \CC$. Since $f$ clearly satisfies the hypotheses of Proposition \ref{prop:Poisson} (with $G$ replaced by $H$), we obtain
\begin{equation}\label{eq:poisson_result}
  F_{H,f}(s) = \frac{1}{|\OO_k^*\otimes\dual{H}|}\sum_{x\in k^* \otimes \dual{H}}\widehat{f}_H(x;s),\quad \text{ for }\re s > 1/\alpha(H),
\end{equation}
where $\widehat{f}_H(x;s)$ denotes the Fourier transform of $f\Phi_H^{-s}$. In particular, the complex function $F_{H,f}(x;s|G|/|H|)$ is holomorphic on $\re s > |H|/(\alpha(H)|G|)$. 

Let $Q$ be the smallest prime dividing $|G|$. 
If $Q\nmid |H|$, then $\alpha(H)|G|/|H|\geq \alpha(G)+1$, so $F_{H,f}(x;s|G|/|H|)$ is holomorphic on $\re s>1/(\alpha(G)+1)$.
If $Q\mid |H|$, then we will show that $F_{H,f}(x;s|G|/|H|)$ has a meromorphic continuation  to the left of $\re s = 1/\alpha(G)$. 

The main difference between our situation and \cite[\S4, \S5]{Wri89}, is that in our case the sum over $x$ in \eqref{eq:poisson_result} may be infinite, whereas the analogue in Wright's situation is a finite sum (cf.~Remark \ref{rem:Wright}). Hence, in our analysis of $\widehat{f}_H(x;s)$ it is vitally important to control the dependence on $x$. In particular, in what follows all implied constants in Landau's $O$-notation and Vinogradov's $\ll$-notation will not depend on $x$ or $s$. Aside from this, our strategy follows the same classical steps of relating the resulting Euler products to Dedekind zeta functions.

\subsection{Euler factors} \label{subsec:Euler}
In this subsection, we will assume that $Q\mid |H|$. By Lemma \ref{lem:Euler_product}, we obtain the Euler product
$\widehat{f}_H(x;s) = \prod_v \widehat{f}_{H,v}(x_v;s)$, with
\begin{equation} \label{eq:positive_density_euler_factors}
   \widehat{f}_{H,v}(x_v;s) := \frac{1}{|H|}\sum_{\chi_v \in \Hom(k_v^*, H)}\frac{f_v(\chi_v)\pair{\chi_v}{x_v}}{\Phi_H(\chi_v)^{s}}
\end{equation}
for non-archimedean $v$.

We now evaluate the leading terms of the Euler factors $\widehat{f}_{H,v}(x_v;s)$ at almost all places of $k$. We allow ourselves to exclude the places in a finite set $S$ which contains all archimedean places, all places dividing the order of $G$, all places $v$ where $\Lambda_v$ does not satisfy \eqref{eqn:conditions}, and enough places to ensure that the ring of $S$-integers $\OO_S$ is a principal ideal domain. At later points, we will enlarge $S$ as necessary, but it will never depend on anything but $k$, $G$ and our collection $\Lambda$ of local conditions.

Let $\beta_H\geq 1$ be the largest integer such that $(\ZZ/Q\ZZ)^{\beta_H}$ is isomorphic to a subgroup of $H$.
We set $\beta := \beta_G$.
Let $x_v \in \OO_v^* \otimes \dual{H}$. We write $x_v\in \OO_v^{*Q}\otimes \dual{H}$ to say that $x_v$ is in the image of $\OO_v^{*Q}\otimes \dual{H}$ in $\OO_v^*\otimes \dual{H}$ under the natural map (which is not injective in general).


The following is inspired by \cite[Prop.~4.3]{Wri89}.
\begin{lemma}\label{lem:local_positive_density}
  Let $v\notin S$ and assume that $Q\mid |H|$. Let $\re s\geq 0$. 

If $x_v \in \OO_v^*\otimes \dual{H}$, then $\widehat{f}_{H,v}(x_v;s|G|/|H|)$ equals
  \begin{alignat*}{2}
      &1 + (Q^{\beta_H}-1)q_v^{-\alpha(G)s} + O(q_v^{-(\alpha(G)+1)s}),&&\text{ if } q_{v} \equiv 1 \bmod Q \text{ and }x_v\in \OO_v^{*Q}\otimes \dual{H},  \\
    &  1 - q_{v}^{-\alpha(G)s} + O(q_v^{-(\alpha(G)+1)s}),&&\text{ if }q_{v} \equiv 1\bmod Q\text{ and }x_v\not\in \OO_v^{*Q}\otimes \dual{H}, \\
  &    1 + O(q_v^{-(\alpha(G)+1)s}),&&\text{ if }q_{v}\not\equiv 1\bmod Q.
  \end{alignat*}
  If $x_v \not\in \OO_v^*\otimes \dual{H}$, then $\widehat{f}_{H,v}(x_v;s|G|/|H|) = O(q_v^{-(\alpha(G)+1)s})$. 
\end{lemma}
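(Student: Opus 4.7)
The plan is to start from Lemma~\ref{lem:local} (applied with $G$ replaced by $H$) and substitute $s\mapsto s|G|/|H|$ in its conclusion, so that the local Fourier factor is expressed as
\begin{equation*}
\widehat{f}_{H,v}(x_v;s|G|/|H|)=\sum_{m\mid (\exp(H),q_v-1)}\Bigl(\sum_{\substack{\chi_v\in\Hom(\OO_v^*,H)\\\ker\chi_v=\OO_v^{*m}}}\pair{\chi_v}{x_v}\,\tau_{f_v}(\chi_v,x_v)\Bigr)q_v^{-|G|(1-1/m)s}.
\end{equation*}
I would split this into three ranges: $m=1$, $m=Q$ (which only contributes when $Q\mid q_v-1$), and $m>Q$. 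An elementary divisibility check gives $|G|(1/Q-1/m)\geq 1$ whenever $m\mid|G|$ and $m>Q$: separate the cases $Q\mid m$ (so $m\geq 2Q$ and $|G|(m-Q)/(mQ)\geq |G|/m\geq 1$) and $\gcd(Q,m)=1$ (so $Qm\mid|G|$ and $|G|(m-Q)/(mQ)\geq m-Q\geq 1$). Since $|\pair{\chi_v}{x_v}\tau_{f_v}(\chi_v,x_v)|\leq 1$ and the number of $\chi_v$ with a given kernel is bounded in terms of $|H|$ alone, the contributions from $m>Q$ collectively fit into $O_G(q_v^{-(\alpha(G)+1)s})$.

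The central observation uses hypothesis~\eqref{eqn:conditions}. If $\ker\chi_v\in\{\OO_v^*,\OO_v^{*Q}\}$ then the inertia of the sub-$H$-extension corresponding to $\chi_v\psi_v$ is the image of $\chi_v$ alone (since $\psi_v$ is trivial on $\OO_v^*$), hence of order dividing $Q$, so $f_v(\chi_v\psi_v)=1$ for every $\psi_v\in\Hom(k_v^*/\OO_v^*,H)$. Consequently
\begin{equation*}
\tau_{f_v}(\chi_v,x_v)=\frac{1}{|H|}\sum_{\psi_v\in\Hom(k_v^*/\OO_v^*,H)}\pair{\psi_v}{x_v},
\end{equation*}
and Pontryagin duality (Section~\ref{subsec:top}) identifies this with $1$ when $x_v\in\OO_v^*\otimes\dual H$ and with $0$ otherwise. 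The case $x_v\not\in\OO_v^*\otimes\dual H$ is then immediate: both the $m=1$ and the $m=Q$ coefficients vanish, leaving $\widehat{f}_{H,v}(x_v;s|G|/|H|)=O(q_v^{-(\alpha(G)+1)s})$.

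For $x_v\in\OO_v^*\otimes\dual H$ the $m=1$ contribution is $1$, and it remains to evaluate the $m=Q$ coefficient when $q_v\equiv 1\bmod Q$. Since $v\notin S$, the group $\OO_v^*/\OO_v^{*Q}$ is cyclic of order $Q$, so using $\tau_{f_v}(\chi_v,x_v)=1$ the coefficient reduces to $\sum_{\chi_v:\ker=\OO_v^{*Q}}\pair{\chi_v}{x_v}$. Viewing this as a character sum on $\Hom(\OO_v^*/\OO_v^{*Q},H)$, whose Pontryagin dual is $(\OO_v^*/\OO_v^{*Q})\otimes\dual H$, orthogonality gives total $Q^{\beta_H}$ when the image of $x_v$ there is trivial (equivalently $x_v\in\OO_v^{*Q}\otimes\dual H$) and $0$ otherwise; subtracting the single trivial-character contribution produces the claimed $Q^{\beta_H}-1$ and $-1$. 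If $q_v\not\equiv 1\bmod Q$ then $m=Q$ is absent, leaving $1+O(q_v^{-(\alpha(G)+1)s})$. The main delicacy lies in correctly identifying the condition ``$x_v\in\OO_v^{*Q}\otimes\dual H$'' with the vanishing coming from the dual pairing between $\Hom(\OO_v^*/\OO_v^{*Q},H)$ and $(\OO_v^*/\OO_v^{*Q})\otimes\dual H$, which is where the tensor-product framework of Section~\ref{subsec:top} does the work; the remainder of the argument is bookkeeping.
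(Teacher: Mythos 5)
Your proof is correct and follows essentially the same route as the paper: apply Lemma~\ref{lem:local} with $G$ replaced by $H$, use condition~\eqref{eqn:conditions} to evaluate $\tau_{f_v}(\chi_v,x_v)$ for $\chi_v$ with kernel $\OO_v^*$ or $\OO_v^{*Q}$, and apply character orthogonality on $\Hom(\OO_v^*/\OO_v^{*Q},H)$. The only place you do more than the paper is in explicitly justifying that all divisors $m>Q$ of $(\exp(H),q_v-1)$ contribute at most $O(q_v^{-(\alpha(G)+1)s})$ after the substitution $s\mapsto s|G|/|H|$, via the case split $Q\mid m$ versus $\gcd(Q,m)=1$; the paper simply absorbs those terms into the error without comment and closes with the observation $\alpha(H)|G|/|H|=\alpha(G)$. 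Your version is a bit more self-contained on that point, and everything else matches.
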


\begin{proof}
From Lemma \ref{lem:local}, with $G$ replaced by $H$, we get



\begin{equation*}
  \widehat{f}_{H,v}(x_v;s) = \sum_{\substack{m\mid d_{H,v}\\m\in\{1,Q\}}}
  \sum_{\substack{\chi_v\in\Hom(\OO_v^*,H)\\ \ker\chi_v=\OO_v^{*m}}}
  \hspace{-15pt}
  \pair{\chi_v}{x_v}\tau_{f_v}(\chi_v, x_v) q_v^{-|H|(1-1/m)s} + O(q_v^{-(\alpha(H)+1)s}),
\end{equation*}
where $d_{H,v}$ is the greatest common divisor of $q_v - 1$ and the exponent of $H$. By our conditions \eqref{eqn:conditions} on $\Lambda_v$, we have $f_v(\psi_v)=1$ for $\psi_v\in \Hom(k_v^*/\OO_v^*, H)$. As in the proof of Lemma \ref{lem:zero}, we see that $\tau_{f_v}(\Const_v,x_v) = 1$ if $x_v \in \OO_v^*\otimes \dual{H}$ and $\tau_{f_v}(\Const_v,x_v) = 0$ otherwise.

If $q_v\not\equiv 1\bmod Q$, then $Q\nmid d_{H,v}$, and hence we obtain
\begin{equation*}
  \widehat{f}_{H,v}(x_v; s) =
  \begin{cases}
    1+O(q_v^{-(\alpha(H)+1)s}), &\text{ if }x_v \in \OO_v^*\otimes \dual{H},\\
    O(q_v^{-(\alpha(H)+1)s}), &\text{ if }x_v \notin \OO_v^*\otimes \dual{H}.
  \end{cases}
\end{equation*}
Now assume that $q_v\equiv 1\bmod Q$, so that $Q\mid d_{H,v}$. Let $\chi_v\in\Hom(\OO_v^*, H)$ with $\ker\chi_v = \OO_v^{*Q}$. The conditions \eqref{eqn:conditions} on $\Lambda_v$ ensure that $f_v(\chi_v\psi_v)=1$ for all $\psi_v\in\Hom(k_v^*/\OO_v^*,H)$. Hence,
\begin{equation*}
  \tau_{f_v}(\chi_v, x_v) := \frac{1}{|H|}\sum_{\psi_v \in \Hom(k_v^*/\OO_v^*, H)} \pair{\psi_v}{x_v} =
    \begin{cases}
      1 &\text{ if } x_v \in \OO_v^*\otimes \dual{H},\\
      0 &\text{ otherwise.}
    \end{cases}
\end{equation*}
Thus, we have shown that
\begin{equation*}
  \widehat{f}_{H,v}(x_v; s) =
  \begin{cases}
    1 + \sigma_v(x_v)q_v^{-\alpha(H)s} + O(q_v^{-(\alpha(H)+1)s}) &\text{ if }x_v \in \OO_v^*\otimes \dual{H},\\
    O(q_v^{-(\alpha(H)+1)s}) &\text{ if }x_v \notin \OO_v^*\otimes \dual{H},
  \end{cases}
\end{equation*}
where
\begin{equation*}
  \sigma_v(x_v):= \sum_{\substack{\chi_v\in\Hom(\OO_v^*, H)\\ \ker\chi_v=\OO_v^{*Q}}}\pair{\chi_v}{x_v}.
\end{equation*}

It is a simple task to evaluate $\sigma_v(x_v)$. Indeed, using character orthogonality, $\sigma_v(x_v)+1$ equals
\begin{equation*}
  \sum_{\substack{\chi_v\in\Hom(\OO_v^*/\OO_v^{*Q}, H)}}\pair{\chi_v}{x_v} =
  \begin{cases}
    |\Hom(\OO_v^*/\OO_v^{*Q}, H)| &\text{ if }x_v\in \OO_v^{*Q}\otimes \dual{H} ,\\
    0 &\text{ if }x_v\not\in \OO_v^{*Q}\otimes \dual{H}.
  \end{cases}
\end{equation*}
Moreover, $|\Hom(\OO_v^*/\OO_v^{*Q}, H)| = Q^{\beta_H}$, the number of elements of order dividing $Q$ in $H$. We finish our proof with the observation that $\alpha(H)|G|/|H| = \alpha(G)$, so that $(\alpha(H)+1)|G|/|H|\geq\alpha(G)+1$.
\end{proof}

\subsection{Splitting conditions}\label{subsec:positive_density_splitting_conditions}
Assume that $Q\mid|H|$. Our goal is to relate the Fourier transform $\widehat{f}_H(x; s)$ to a product of Dedekind zeta functions. Hence, we want to express the conditions deciding the shape of the Euler factors in Lemma~\ref{lem:local_positive_density} in terms of splitting properties of $v$.
Recall that $x_v$ denotes the image of $x \in k^*\otimes \dual{H}$ under the natural map $k^*\otimes \dual{H} \to k_v^*\otimes \dual{H}$. Let
\begin{equation}\label{eq:positive_density_T}
  T := T(x) := S \cup \{v \text{ place of }k :\ x_v\notin \OO_v^*\otimes \dual{H}\}.
\end{equation}
Let $k_0 := k(\mu_Q)$ be the field obtained by adjoining to $k$ all $Q$-th roots of unity.

Each $\psi\in\Hom(\mu_Q, H)$ induces a map $\Psi \in \Hom(k^*\otimes \dual{H}, k_0^*/k_0^{*Q})$ as follows:
\begin{equation}\label{eq:xtok_x}
k^*\otimes \dual{H}\to k_0^*/k_0^{*Q}\otimes \dual{H} 
\xrightarrow{1 \otimes \psi^\wedge} k_0^*/k_0^{*Q} \otimes \dual{\mu_Q}= k_0^*/k_0^{*Q}.
\end{equation}
Thus, by Kummer theory, each $x\in k^* \otimes \dual{H}$ determines an elementary abelian $Q$-extension $k_x/k_0$, corresponding to the subgroup of $k_0^*/k_0^{*Q}$ generated by the images of $x$ under all such $\Psi$. Let us collect some properties of the extensions $k_0$ and $k_x$ (cf.~the end
of \cite[\S4]{Wri89}).

\begin{lemma}\label{lem:positive_density_splitting}
Let $x\in k^*\otimes \dual{H}$.
\begin{enumerate}
\item The extension $k_0/k$ is cyclic of degree dividing $Q-1$, and the extension $k_x/k_0$ is abelian with Galois group isomorphic to a subgroup of $(\ZZ/Q\ZZ)^{\beta_H}$.
\item The conductor of $k_x/k_0$ has absolute norm $\ll \prod_{v\in T(x)}q_v^{[k_0:k]}$.
\item For any place $v$ of $k$ with $v \notin T(x)$, we have the equivalences
\begin{align*}
  \text{ $v$ splits completely in $k_0$}& \quad \Leftrightarrow \quad q_v\equiv 1\bmod Q,\\
  \text{ $v$ splits completely in $k_x$}& \quad \Leftrightarrow \quad q_v\equiv 1 \bmod Q \text{ and }x_v\in \OO_v^{*Q} \otimes \dual{H}.
\end{align*}
\end{enumerate}
\end{lemma}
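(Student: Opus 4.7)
The plan is to treat the three assertions in turn using standard Kummer theory and class field theory. For part (1), the Galois group $\Gal(k_0/k)$ embeds via the cyclotomic character into $(\ZZ/Q\ZZ)^{\times}$, so it is cyclic of order dividing $Q-1$. For $k_x/k_0$ I would invoke Kummer theory: since $\mu_Q \subseteq k_0$, abelian exponent-$Q$ extensions of $k_0$ correspond to subgroups of $k_0^*/k_0^{*Q}$, and $k_x$ corresponds to $V_x$, the image of the homomorphism $\Hom(\mu_Q, H) \to k_0^*/k_0^{*Q}$, $\psi \mapsto \Psi(x)$ (this is $\ZZ$-linear in $\psi$, so the image is a subgroup). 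Hence $|V_x| \leq |\Hom(\mu_Q, H)| = |H[Q]| = Q^{\beta_H}$, so $\Gal(k_x/k_0)$ is elementary abelian of order at most $Q^{\beta_H}$, embedding into $(\ZZ/Q\ZZ)^{\beta_H}$.

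For part (2), I would show $k_x/k_0$ is unramified at places of $k_0$ lying over $\{v \notin T(x): v \nmid Q\}$. Indeed, for such $v$ and $w \mid v$, the hypothesis $x_v \in \OO_v^* \otimes \dual{H}$ forces each $\Psi(x)$ to land in $\OO_{k_0,w}^* \cdot k_{0,w}^{*Q}/k_{0,w}^{*Q}$, so the local Kummer extension is unramified at $w$. At places $w$ above $v \in T(x) \setminus S$ the ramification is tame, giving conductor exponent at most $1$ at each such $w$ and thus a contribution of at most $\prod_{w \mid v} q_w \leq q_v^{[k_0:k]}$. Places above $Q$ all lie in $S \subseteq T(x)$, a fixed finite set, so they contribute only an $O(1)$ factor that is absorbed into the implied constant.

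For part (3), fix $v \notin T(x)$, so $v \nmid Q$ and $v$ is unramified in $k_0$. Standard cyclotomic theory gives that $v$ splits completely in $k_0$ iff $k_v \supseteq \mu_Q$ iff $q_v \equiv 1 \bmod Q$. When this holds, $k_{0,w} = k_v$ for each $w \mid v$, so $v$ splits completely in $k_x$ iff every element of $V_x$ is a $Q$-th power in $k_v^*$. The map $\psi \mapsto \Psi(x)_w$ from $\Hom(\mu_Q, H)$ to $k_v^*/k_v^{*Q}$ is obtained from $x_v \in k_v^* \otimes \dual{H}$ by applying $1 \otimes \psi^{\wedge}$; since $\psi \mapsto \psi^{\wedge}$ identifies $\Hom(\mu_Q, H)$ with $\Hom(\dual{H}/Q\dual{H}, \dual{\mu_Q})$ and $\dual{H}/Q\dual{H} \cong (\ZZ/Q\ZZ)^{\beta_H}$, these maps jointly detect vanishing in $M \otimes \dual{H}$ for any $\ZZ/Q\ZZ$-module $M$. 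Therefore simultaneous triviality of $\Psi(x)_w$ for all $\psi$ is equivalent to vanishing of $x_v$ in $k_v^*/k_v^{*Q} \otimes \dual{H}$. Using the direct sum decomposition $k_v^*/k_v^{*Q} = \ZZ/Q\ZZ \oplus \OO_v^*/\OO_v^{*Q}$ (valid since $v \nmid Q$) combined with right-exactness of $- \otimes \dual{H}$, this is equivalent to $x_v$ lifting to $\OO_v^{*Q} \otimes \dual{H}$, as required.

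The main obstacle is the final equivalence in part (3): bridging from the Galois-theoretic condition ``all $\Psi(x)$ become $Q$-th powers locally'' to the algebraic membership $x_v \in \OO_v^{*Q} \otimes \dual{H}$ requires careful simultaneous use of Kummer theory, Pontryagin duality on the finite group $\dual{H}/Q\dual{H}$, and right-exactness of tensor product with $\dual{H}$. The bookkeeping needed to ensure that the various $\psi^{\wedge}$ collectively separate points is the subtlest piece of the argument.
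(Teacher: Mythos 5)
Your proof is correct and follows essentially the same route as the paper's: part (1) via the cyclotomic character and Kummer theory, part (2) via unramifiedness at places outside $T(x)$ and tameness at places above $T(x)\setminus S$, and part (3) by reducing to the injectivity of the map $k_v^*/k_v^{*Q}\otimes\dual{H}\to\prod_{\psi}k_v^*/k_v^{*Q}$ (equivalently, that the $\psi^\wedge$ jointly separate points of $\dual{H}/Q\dual{H}$) and then identifying the kernel using the splitting $k_v^*/k_v^{*Q}\cong\ZZ/Q\ZZ\oplus\OO_v^*/\OO_v^{*Q}$ together with right-exactness of tensoring. The only minor difference is that you spell out the final translation to ``$x_v\in\OO_v^{*Q}\otimes\dual{H}$'' slightly more explicitly than the paper, which leaves that identification implicit.
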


\begin{proof}
The $\mathbb{F}_Q$-vector subspace of $k_0^*/k_0^{*Q}$ corresponding to $k_x/k_0$ is spanned by the images of $x$ in $k_0^*/k_0^{*Q}$ under the $\beta_H$ maps coming from an $\mathbb{F}_Q$-basis of $\Hom(\mu_Q, H)$. The statement on the $\mathbb{F}_Q$-rank of $\Gal(k_x/k_0)$ then follows from Kummer theory. All other statements in (1) are obvious.

Now let $v$ be a non-archimedean place of $k$ and let $w$ be a place of $k_0$ above $v$.
If $v \notin T(x)$, then $x_v \in \OO_v^* \otimes \dual{H}$ and it follows that the induced extension
of $k_{0,w}$ is unramified. On the other hand, if $v \in T(x)$ then ramification may occur, but when $w \nmid Q$
any such ramification must be tame. This shows (2).

We now prove (3). It is well known that $v$ splits completely in $k_0$ if and only if $q_v\equiv 1\bmod Q$. Let  $v\notin T(x)$ be completely split in $k_0$, so that \mbox{$k_v=k_{0,w}$}. A variant of the construction \eqref{eq:xtok_x} associates to each $\psi\in \Hom(\mu_Q,H)$ a map \mbox{$\Psi_v \in \Hom(k_v^*\otimes \dual{H}, k_v^*/k_v^{*Q})$}, and  $v$ is completely split in $k_x/k$ if and only if \mbox{$x_v \in \ker \Psi_v$} for all $\psi$. Therefore, as $x_v \in \OO_v^* \otimes \dual{H}$, to prove the result it suffices to show that 
\begin{equation} \label{eqn:ker}
\bigcap_{\mathclap{\psi\in \Hom(\mu_Q,H)}} \,\, \ker \Psi_v = \ker\left( k_v^* \otimes \dual{H} \to k_v^*/k_v^{*Q} \otimes \dual{H}\right),\end{equation}
since $\ker( k_v^* \otimes \dual{H} \to k_v^*/k_v^{*Q} \otimes \dual{H}) 
= \im(k_v^{*Q} \otimes \dual{H} \to k_v^{*} \otimes \dual{H}).$
The equality \eqref{eqn:ker} follows from the fact that the map
$$k_v^*/k_v^{*Q} \otimes \dual{H} \longrightarrow \prod_{\mathclap{\psi\in \Hom(\mu_Q,H)}} k_v^*/k_v^{*Q},$$
induced by the product of the $\psi\in \Hom(\mu_Q,H)$, is injective.
\end{proof}

\subsection{$L$-functions}\label{subsec:positive_density_l_functions}
We continue to assume that $Q\mid |H|$. We now find a meromorphic continuation of the partial Euler product
\begin{equation*}
  L_{f,H}(x;s|G|/|H|):=\prod_{v\notin T(x)}\widehat{f}_{H,v}(x_v;s|G|/|H|)
\end{equation*}
to the left of $\re s = 1/\alpha(G)$, making the dependence on $x$ explicit. We will take care of the contribution of places
$v \in T(x)$ in the next subsection.

To continue, we introduce some notation.
Let $a,b>0$ and let $f(x;s)$, $g(x;s)$ be families of holomorphic functions on some half-plane $\re s > b \geq a$, parameterised by $x$. We write $f(x;s)\approx_a g(x;s)$ if there is a family of holomorphic functions $\phi(x;s)$ on the half-plane $\re s > a$, satisfying $1\ll_\epsilon\phi(x;s)\ll_\epsilon 1$ on $\re s > a+\epsilon$, independently of $x$, such that $f(x;s)=\phi(x;s)g(x;s)$ for $\re s > b$. 

For any finite Galois extension $K/k$, the partial Euler product 
\begin{equation*}
  \zeta_{K,T}(s) := \prod_{\substack{w\in\Omega_K\\w|_k\not\in T}}\frac{1}{1-q_w^{-s}},
\end{equation*}
of the Dedekind zeta function defines a meromorphic function with a simple pole at $s=1$. We use Lemma \ref{lem:local_positive_density} and Lemma \ref{lem:positive_density_splitting}  to relate $L_{f,H}(x;s|G|/|H|)$ to a product formed from $\zeta_{k_0,T}(s)$ and $\zeta_{k_x,T}(s)$. 

\begin{lemma}\label{lem:positive_density_d_th_power}
Let $d:=[k_0:k]$ and $Q^m := [k_x : k_0]$, with $m = m(x)\leq \beta_H$, and $T = T(x)$ as in \eqref{eq:positive_density_T}. Then
\begin{equation}\label{eq:positive_density_d_th_power}
    L_{f,H}(x;s|G|/|H|)^d \approx_{1/(\alpha(G)+1)}
    \zeta_{k_x,T}(\alpha(G)s)^{(Q^{(\beta_H-m)}-1)}\left(\frac{\zeta_{k_x,T}(\alpha(G)s)}{\zeta_{k_{0},T}(\alpha(G)s)}\right).
\end{equation}
\end{lemma}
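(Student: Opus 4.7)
The natural strategy is a place-by-place comparison of Euler factors. Using the identity $\alpha(H)\cdot|G|/|H|=\alpha(G)$, we rewrite the exponent on the left and apply Lemma \ref{lem:local_positive_density} to each local Fourier factor $\widehat{f}_{H,v}(x_v;s|G|/|H|)$ for $v\notin T(x)$. The three alternatives in that lemma correspond, via Lemma \ref{lem:positive_density_splitting}(3), exactly to the three splitting types of $v$: (i) $v$ splits completely in $k_x$; (ii) $v$ splits completely in $k_0$ but not in $k_x$; (iii) $v$ does not split completely in $k_0$.

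For each case, I would compute the $v$-factor of the right-hand side, which is cleaner to view as $\zeta_{k_x,T}(\alpha(G)s)^{Q^{\beta_H-m}}/\zeta_{k_0,T}(\alpha(G)s)$. In case (i), $v$ contributes $dQ^m$ degree-one places in $k_x$ and $d$ in $k_0$, giving $(1-q_v^{-\alpha(G)s})^{-d(Q^{\beta_H}-1)}$. In case (ii), since $\Gal(k_x/k_0)\cong(\ZZ/Q\ZZ)^{m}$ by Lemma \ref{lem:positive_density_splitting}(1), the decomposition group at any non-split $w\mid v$ in $k_x/k_0$ is cyclic of order exactly $Q$, so $v$ contributes $d$ degree-one places in $k_0$ and $dQ^{m-1}$ places of norm $q_v^Q$ in $k_x$. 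In case (iii), the Frobenius of $v$ in $k_0/k$ is non-trivial, so every place of $k_0$ (and of $k_x$) above $v$ has residue degree at least $2$, and the $v$-factor is $1+O(q_v^{-2\alpha(G)s})$ with a constant depending only on $k,G,H$. Taking the $d$-th power of the Lemma \ref{lem:local_positive_density} expansion and dividing by the RHS factor in each case, the leading $q_v^{-\alpha(G)s}$ terms cancel and the quotient of $v$-factors takes the form $1+O(q_v^{-(\alpha(G)+1)s})$.

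This uniform-in-$x$ bound means that the product $\phi(x;s):=L_{f,H}(x;s|G|/|H|)^{d}/(\textrm{RHS})$ defines an absolutely convergent Euler product over $v\notin T(x)$ in the half-plane $\re s>1/(\alpha(G)+1)$, and the absence of $x$-dependence in the implicit constants yields the uniform bounds $1\ll_\epsilon\phi(x;s)\ll_\epsilon 1$ required by the $\approx_{1/(\alpha(G)+1)}$ notation on $\re s>1/(\alpha(G)+1)+\epsilon$. The main obstacle is the cancellation in case (i): both the $L_{f,H}^d$ and RHS local factors carry the same non-negligible term $d(Q^{\beta_H}-1)q_v^{-\alpha(G)s}$, so one must expand each to second order and verify that the leftover $O(q_v^{-(\alpha(G)+1)s})$ error from Lemma \ref{lem:local_positive_density} dominates the quadratic $O(q_v^{-2\alpha(G)s})$ coming from $(1-q_v^{-\alpha(G)s})^{-d(Q^{\beta_H}-1)}$. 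This reduces to checking $2\alpha(G)\geq\alpha(G)+1$, i.e.~$\alpha(G)\geq 1$, which holds since $|G|\geq Q\geq 2$; the analogous domination in case (ii) follows from $Q\alpha(G)\geq\alpha(G)+1$.
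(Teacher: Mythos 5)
Your proof is correct and follows essentially the same route as the paper: combine the local expansions of Lemma \ref{lem:local_positive_density} with the splitting characterisations of Lemma \ref{lem:positive_density_splitting}(3), and compare Euler factors of the two sides for $v\notin T(x)$. The paper compresses the comparison by first replacing $\zeta_{k_0,T}$ and $\zeta_{k_x,T}$ with the partial products $\prod(1+dq_v^{-s})$ and $\prod(1+dQ^m q_v^{-s})$ over split primes (valid up to $\approx_{1/2}$), whereas you perform the three-case analysis directly against the raw Dedekind zeta Euler factors; these are the same computation organized differently, and your explicit treatment of cases (ii) and (iii) supplies what the paper abbreviates as ``the usual argument about split primes''. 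Your final inequality checks ($2\alpha(G)\geq\alpha(G)+1$, i.e.\ $\alpha(G)\geq 1$, and $Q\alpha(G)\geq\alpha(G)+1$) are the right ones and hold since $|G|\geq Q\geq 2$.
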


\begin{proof}
  The argument is fairly standard and can be found in more detail in \mbox{\cite[\S 5]{Wri89}}. In each step we might need to enlarge our set $S$ by some small places, to ensure that all occurring Euler factors are bounded away from $0$. We can choose these places independently of $x$ (and $T$), since all explicit and implicit constants appearing in the Euler factors are bounded independently of $x$. By Lemma \ref{lem:positive_density_splitting} and the usual argument about split primes, we obtain 
\begin{align*}
  \zeta_{k_0,T}(s) &\approx_{1/2}\prod_{\substack{v\notin T\\q_v\equiv 1\bmod Q}}\left(1+dq_v^{-s}\right)\text{, and}\\
  \zeta_{k_x,T}(s) &\approx_{1/2}\prod_{\substack{v\notin T\\q_v\equiv 1\bmod Q\\
  x_v\in \OO_v^{*Q}\otimes \dual{H}}}\left(1+dQ^mq_v^{-s}\right).
\end{align*}
Hence, the expression on the right-hand side of \eqref{eq:positive_density_d_th_power} is 
\begin{equation*}
\approx_{1/(2\alpha(G))} \prod_{\substack{v\notin T\\q_v\equiv 1\bmod Q
\\x_v\in \OO_v^{*Q}\otimes \dual{H}}}\left(1+d(Q^{\beta_H}-1)q_v^{-\alpha(G)s}\right)\prod_{\substack{v\notin T\\q_v\equiv 1\bmod Q\\x_v\not\in \OO_v^{*Q}\otimes \dual{H}}}\left(1 - dq_v^{-\alpha(G)s}\right).
\end{equation*}
Together with Lemma \ref{lem:local_positive_density}, this yields the desired result.
\end{proof}

We want to isolate the pole at $s=1/\alpha(G)$ on the right-hand side of \eqref{eq:positive_density_d_th_power} independently of $x$. This is achieved in the next lemma. For any number field $K$, we let $\zeta_K(s)$ be its Dedekind zeta function. 

\begin{lemma}
  The complex functions $\zeta_{k_x,T}(s)/\zeta_{k_0,T}(s)$ and $\zeta_{k_x,T}(s)/\zeta_{k_0}(s)$ are entire. For any $\delta > 0$ there is a positive constant $\gamma(\delta)$ that depends only on $\delta$, $k$, and $G$, such that
  \begin{equation}\label{eq:positive_density_zeta_bounds}
    \frac{\zeta_{k_x,T}(s)}{\zeta_{k_0,T}(s)} \ll_\delta (1+|\im s|)^\delta \prod_{v\in T}q_v^\delta\quad\text{ and }\quad     \frac{\zeta_{k_x,T}(s)}{\zeta_{k_0}(s)} \ll_\delta (1+|\im s|)^\delta \prod_{v\in T}q_v^\delta
  \end{equation}
  hold for $\re s > 1-\gamma(\delta)$, where the implied constant depends only on $k, G$ and $\delta$.
\end{lemma}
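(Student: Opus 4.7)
The plan is to decompose the two ratios via Hecke $L$-functions and then apply the classical convexity bound. By Lemma \ref{lem:positive_density_splitting}(1), the extension $k_x/k_0$ is abelian with Galois group $A$ isomorphic to a subgroup of $(\ZZ/Q\ZZ)^{\beta_H}$. Class field theory identifies each character of $A$ with an idele class character of $k_0$, and the corresponding Artin $L$-function is a Hecke $L$-function. Factoring the Dedekind zeta functions in the usual way gives
\begin{equation*}
\frac{\zeta_{k_x,T}(s)}{\zeta_{k_0,T}(s)} \,=\, \prod_{\chi \in \dual{A}\setminus\{1\}} L_T(s,\chi),
\qquad L_T(s,\chi) := \prod_{u \notin T} L_u(s,\chi).
\end{equation*}

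For entirety: each nontrivial $\chi$ yields an entire Hecke $L$-function $L(s,\chi)$, and
\begin{equation*}
L_T(s,\chi) \,=\, L(s,\chi) \cdot \prod_{u \in T} L_u(s,\chi)^{-1},
\end{equation*}
where each removed local factor $L_u(s,\chi)^{-1}$ is a polynomial in $q_u^{-s}$. So $L_T(s,\chi)$ is entire, and hence so is $\zeta_{k_x,T}(s)/\zeta_{k_0,T}(s)$. The second ratio equals the first times $\zeta_{k_0,T}(s)/\zeta_{k_0}(s) = \prod_{u \in \Omega(k_0),\, u|_k \in T}(1-q_u^{-s})$, which is entire.

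For the bound, fix $\delta>0$. By Lemma \ref{lem:positive_density_splitting}(2), the conductor $\mathfrak{f}_\chi$ of any $\chi \in \dual{A}$ divides the conductor of $k_x/k_0$, whose absolute norm is $\ll_{k,G} \prod_{v \in T} q_v^{[k_0:k]}$. The classical convexity bound (Phragm\'en--Lindel\"of applied between $\re s = 1+\varepsilon$ and the image of this line under the functional equation) gives, uniformly for $\chi$ and $\re s \in [0,1]$,
\begin{equation*}
|L(s,\chi)| \,\ll_{k,G,\varepsilon}\, \bigl(\mathfrak{N}(\mathfrak{f}_\chi)\,(2+|\im s|)^{[k_0:\QQ]}\bigr)^{(1-\re s)/2+\varepsilon}.
\end{equation*}
Since $|A| \leq Q^{\beta}$, $[k_0:k] \leq Q-1$, and $[k_0:\QQ] \leq (Q-1)[k:\QQ]$ are all bounded in terms of $k$ and $G$ only, we may choose $\gamma(\delta),\varepsilon > 0$ (depending only on $k,G,\delta$) small enough that the product over $\chi \in \dual{A}\setminus\{1\}$ of the right-hand side is $\ll_\delta (1+|\im s|)^\delta \prod_{v \in T} q_v^\delta$ throughout the half-plane $\re s > 1-\gamma(\delta)$. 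The removed local factors $L_u(s,\chi)^{-1}$ satisfy $|L_u(s,\chi)^{-1}| \leq (1+q_u^{-\re s})^{[k_x:k_0]} \ll_G 1$ on $\re s > 1/2$, as does $\prod_{u|_k \in T}(1-q_u^{-s})$. Collecting everything yields both claimed inequalities.

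The only real obstacle is uniformity in $x$, because the data $(A,k_x)$ and the set $T$ vary with $x$. This is handled precisely by Lemma \ref{lem:positive_density_splitting}: part (1) bounds $|A|$, $[k_x:k_0]$, $[k_0:k]$ and $[k_0:\QQ]$ uniformly in $k$ and $G$, while part (2) converts the dependence on $\mathfrak{N}(\mathfrak{f}_\chi)$ into the tractable factor $\prod_{v \in T} q_v^{[k_0:k]}$, which after raising to the small exponent $(1-\re s)/2+\varepsilon$ fits into the target bound $\prod_{v \in T} q_v^\delta$.
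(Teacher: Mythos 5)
Your proof is correct and follows essentially the same route as the paper: factor the ratio into partial Hecke $L$-functions over $\widehat{A} \cong \dual{(\Adele_{k_0}^*/U)}$, use Lemma~\ref{lem:positive_density_splitting}(2) to bound the conductors in terms of $\prod_{v\in T}q_v$, invoke the convexity bound and Phragm\'en--Lindel\"of, and control the finitely many removed Euler factors. Two small slips worth noting, neither fatal: the exponent in ``$|L_u(s,\chi)^{-1}| \leq (1+q_u^{-\re s})^{[k_x:k_0]}$'' is spurious (each local factor $L_u(s,\chi)^{-1}=1-\chi(\mathrm{Frob}_u)q_u^{-s}$ is itself bounded by $1+q_u^{-\re s}\leq 2$, no power of $[k_x:k_0]$ involved), and more importantly the product of removed local factors is not $\ll_G 1$ \emph{uniformly in $T$} but rather $\ll C^{|T|}$ for a constant $C=C(k,G)$; to absorb $C^{|T|}$ into $\prod_{v\in T}q_v^\delta$ one should observe that only finitely many places $v$ of $k$ have $q_v < C^{1/\delta}$, so the quotient $C^{|T|}/\prod_{v\in T}q_v^\delta$ is bounded by a constant depending only on $k$, $G$ and $\delta$. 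The paper glosses over the same point with its bound $|P(T;s)|\le 2^{dQ^\beta|T|}$, so this is a shared omission rather than a divergence from the intended argument.
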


\begin{proof}
Let $\Adele_{k_0}^*$ be the idele group of $k_0$, and let $U$ be the open subgroup corresponding to the abelian extension $k_x/k_0$. Then
  \begin{equation*}
    \zeta_{k_x}(s) = \,\, \prod_{\mathclap{\substack{\chi \in \dual{(\Adele_{k_0}^*/U)}}}} \,L(s,\chi),
  \end{equation*}
where $L(s,\chi)$ is the $L$-function associated to $\chi$. Then $L(s,\Const) = \zeta_{k_0}(s)$, and $L(s,\chi)$ is entire for $\chi \neq \Const$. This shows that $\zeta_{k_x}(s)/\zeta_{k_0}(s)$ is entire. 

The conductor of every character $\chi \in \dual{(\Adele_{k_0}^*/U)}$ divides the conductor of $k_x/k_0$, so by Lemma \ref{lem:positive_density_splitting} its absolute norm is $\ll \prod_{v\in T(x)}q_v^d$, where $d = [k_0:k]$. The standard convexity bound (see \cite[(5.20)]{IK04}) yields, for $\chi\neq \Const$ and $\re s\in [0,1]$,
\begin{equation*}
  L(s,\chi) \ll_\epsilon \left((3 + |s|)^d\prod_{v\in T}q_v^d\right)^{(1-\re s)/2+\epsilon} \ll_\epsilon (1+|\im(s)|)^{2\epsilon}\prod_{v\in T}q_v^{2\epsilon},
\end{equation*}
for $1 - \gamma'(\epsilon)<\re s \leq 1$, with a suitable $\gamma'(\epsilon)>0$. This estimate extends to the half-plane $\re s > 1-\gamma'(\epsilon)$, by the Phragmen-Lindel\"of principle and the bound $L(s,\chi)\ll_\epsilon 1$ for $\re s > 1+\epsilon$. Since $|\dual{(\Adele_{k_0}^*/U)}| \leq Q^{\beta_H} \ll 1$, this shows that for any $\delta > 0$ there is a positive constant $\gamma(\delta)$, such that
\begin{equation*}
  \frac{\zeta_{k_x}(s)}{\zeta_{k_0}(s)} \ll_\delta (1+|\im s|)^{\delta}\prod_{v\in T}q_v^{\delta},
\end{equation*}
whenever $\re s> 1-\gamma(\delta)$. Let
\begin{equation*}
  P(T;s) := \prod_{v\in T\smallsetminus{\Omega_\infty}}\prod_{\substack{w \in \Omega(k_0)\\w|v}}\left(1-q_w^{-s}\right)^{-1}\prod_{\substack{w' \in\Omega(k_x)\\w'|w}}\left(1-q_{w'}^{-s}\right).
\end{equation*}
Then 
\begin{equation} \label{eqn:P(T;s)}
  \frac{\zeta_{k_x,T}(s)}{\zeta_{k_0,T}(s)} = P(T;s)\cdot\frac{\zeta_{k_x}(s)}{\zeta_{k_0}(s)}.
\end{equation}
Since $(1-X)$ divides $(1-X^{f})$ for any positive integer $f$, the function $P(T;s)$ is entire, and $|P(T;s)|\leq 2^{dQ^\beta |T|}$ for $\re s> 0$. 
This proves the first part of \eqref{eq:positive_density_zeta_bounds}. For the second part, use
\begin{equation*}
  \tilde{P}(T;s) := \prod_{\mathclap{\substack{w' \in\Omega(k_x)\\w'|_k \in T \setminus \Omega_\infty}}}\left(1-q_{w'}^{-s}\right). \qedhere
\end{equation*}

\end{proof}

 It is proved in \cite[\S5]{Wri89} that the $d$-th root of $\zeta_{k_x,T}(s)/\zeta_{k_{0},T}(s)$ can be defined as a single-valued holomorphic function
 (this is because each zero and pole occurs with multiplicity divisible by $d$). Hence, we have proved the following lemma,
 which is the main result of this subsection.

\begin{lemma}\label{lem:positive_density_euler_prod_analyt_cont}
For any $x\in k^*\otimes \dual{H}$, let $T, m,d$ be as in Lemma \ref{lem:positive_density_d_th_power}. Then there is a function $\phi(x;s)$, holomorphic on $\re s > 1/(\alpha(G)+1)$, such that 
\begin{equation*}
  L_{f,H}(x;s|G|/|H|) = \phi(x;s)\cdot \zeta_{k_0}(\alpha(G)s)^{(Q^{\beta_H-m}-1)/d}.   
\end{equation*}
Moreover, for any $\delta>0$ there is a positive constant $\gamma(\delta)$, independent of $x$, such that for $\re(s)>1/\alpha(G)-\gamma(\delta)$, we have
\begin{equation}\label{eq:positive_density_phi_bound}
  \phi(x;s)\ll_\delta (1+|\im s|)^\delta\prod_{v\in T}q_v^\delta.
\end{equation}
\end{lemma}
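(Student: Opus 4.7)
\emph{Proof plan.} The strategy is to take a $d$-th root of the identity in Lemma \ref{lem:positive_density_d_th_power}, after first rearranging so that the factor carrying the pole at $s=1/\alpha(G)$ is an integer power of a zeta function that is independent of $x$, and then collecting the remaining terms into $\phi(x;s)$. The first step is the elementary rearrangement
\begin{equation*}
  \zeta_{k_x,T}(\alpha(G)s)^{Q^{\beta_H - m}-1}\cdot\frac{\zeta_{k_x,T}(\alpha(G)s)}{\zeta_{k_0,T}(\alpha(G)s)} \;=\; \zeta_{k_0,T}(\alpha(G)s)^{Q^{\beta_H - m}-1}\cdot\left(\frac{\zeta_{k_x,T}(\alpha(G)s)}{\zeta_{k_0,T}(\alpha(G)s)}\right)^{Q^{\beta_H - m}},
\end{equation*}
which confines all the $x$-dependent analytic behaviour to the right-hand factor. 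The key arithmetic input is that $d=[k_0:k]$ divides $Q-1$ (since $k_0=k(\mu_Q)$) and $Q-1$ divides $Q^{\beta_H-m}-1$; hence $(Q^{\beta_H-m}-1)/d\in\NN$, so $\zeta_{k_0,T}(\alpha(G)s)^{(Q^{\beta_H-m}-1)/d}$ is a genuine meromorphic function with a pole of the correct order at $s=1/\alpha(G)$.

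Next, I would take $d$-th roots. The $d$-th root of the entire, nonvanishing quotient $E(s):=\zeta_{k_x,T}(\alpha(G)s)/\zeta_{k_0,T}(\alpha(G)s)$ admits a single-valued holomorphic branch by Wright \cite[\S 5]{Wri89}, since all its zeros and poles occur with multiplicity divisible by $d$. The bounded, nonvanishing holomorphic function $\psi_1(x;s)$ hidden in the $\approx_{1/(\alpha(G)+1)}$ relation also admits a $d$-th root, and one can pin down the correct $d$-th root of the product by comparing both sides as $\re s\to\infty$, where $L_{f,H}\to 1$ forces the $d$-th root of unity ambiguity to be trivial. Finally, I would convert $\zeta_{k_0,T}$ into $\zeta_{k_0}$ via the entire, nonvanishing factor $P_T(s):=\zeta_{k_0,T}(\alpha(G)s)/\zeta_{k_0}(\alpha(G)s)$, a finite product over $v\in T$, $w\mid v$ of local Euler factors, and set $\phi(x;s):=\psi_1(x;s)^{1/d}\cdot P_T(s)^{(Q^{\beta_H-m}-1)/d}\cdot E(s)^{Q^{\beta_H-m}/d}$.

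The estimate \eqref{eq:positive_density_phi_bound} will then follow by combining three bounds: (i) $\psi_1(x;s)^{1/d}=O_\epsilon(1)$ on $\re s>1/(\alpha(G)+1)+\epsilon$, immediately from the $\approx_{1/(\alpha(G)+1)}$ relation; (ii) $|P_T(s)^{\pm 1}|\ll_\delta\prod_{v\in T}q_v^\delta$ for $\re s>0$, since $P_T$ is a finite product of factors $(1-q_w^{-\alpha(G)s})^{\pm 1}$ bounded in a strip by $2^{dQ^{\beta_H}|T|}$; and (iii) $|E(s)^{Q^{\beta_H-m}/d}|\ll_\delta(1+|\im s|)^\delta\prod_{v\in T}q_v^\delta$, obtained by raising the bound on $\zeta_{k_x,T}/\zeta_{k_0,T}$ from the previous lemma to the bounded power $Q^{\beta_H-m}/d\leq Q^\beta$ and re-scaling $\delta$. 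The main obstacle I anticipate is keeping (iii) uniform in $x$ throughout the strip $\re s>1/\alpha(G)-\gamma(\delta)$: the naive convergent-Euler-product bound on $E(s)$ holds only on $\re s>1$, and one genuinely needs the Phragmen--Lindel\"of-type estimate established in the previous lemma, together with some care in how fractional powers are taken, to extend the bound into the region relevant for the Tauberian argument.
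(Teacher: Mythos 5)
Your proposal matches the paper's (very terse) argument: both rearrange the right-hand side of Lemma~\ref{lem:positive_density_d_th_power} so the $x$-independent pole sits in a power of $\zeta_{k_0,T}$, invoke Wright~\cite[\S 5]{Wri89} for the single-valued $d$-th root of $\zeta_{k_x,T}/\zeta_{k_0,T}$ (noting $d\mid Q-1\mid Q^{\beta_H-m}-1$), and then read the estimate off the preceding unnumbered lemma. The only slight inaccuracy is calling $\zeta_{k_x,T}/\zeta_{k_0,T}$ ``nonvanishing'' --- it is merely entire with all zeros of multiplicity divisible by $d$, which is what Wright's observation actually supplies and what you correctly rely on --- and your finite $P_T$-factor detour could equally be absorbed by writing the right-hand side directly as $\zeta_{k_0}^{Q^{\beta_H-m}-1}\cdot(\zeta_{k_x,T}/\zeta_{k_0})^{Q^{\beta_H-m}-1}\cdot(\zeta_{k_x,T}/\zeta_{k_0,T})$, using both bounds from the preceding lemma.
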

The highest order of the potential pole at $s=1/\alpha(G)$ is achieved when $m=0$, that is, when $k_x = k_0$. 
\subsection{Global analysis}
We now apply our results from the previous subsections to analyse the sum on the right-hand side of \eqref{eq:poisson_result}, still under the assumption that $Q\mid |H|$. Recall that for $x\in k^*\otimes \dual{H}$, we defined $m(x)\in \{0, \ldots, \beta_H\}$ by $[k_x:k_0] = Q^{m(x)}$. We sort all $x$ by their respective values of $m(x)$ and apply Lemma \ref{lem:positive_density_euler_prod_analyt_cont} to obtain
\begin{equation} \label{eqn:F_zeta}
  F_{H,f}(s|G|/|H|) = \frac{1}{|\OO_k^*\otimes\dual{H}|}\sum_{m=0}^{\beta_H}\zeta_{k_0}(\alpha(G)s)^{(Q^{\beta_H-m}-1)/d}S(m;s),
\end{equation}
where
\begin{equation*}
  S(m;s) := \sum_{\substack{x\in k^*\otimes \dual{H}\\m(x)=m}}\phi(x;s)\prod_{v\in T(x)}\widehat{f}_{H,v}(x_v; s|G|/|H|).
\end{equation*}
Since the sum over $m$ is finite, it suffices to consider each $S(m;s)$ separately. 

\begin{lemma}\label{lem:positive_density_absolute_convergence}
  Let $m \in \{0, \ldots, \beta_H\}$. There are constants $\delta,\gamma > 0$, depending only on $k, G$, such that the series $S(m;s)$ converges absolutely for $\re s > 1/\alpha(G)-\gamma$ and satisfies $S(m;s)\ll (1+|\im s|)^\delta$ in this half-plane. Moreover, the convergence is uniform for $s$ in compact discs, so $S(m;s)$ defines a holomorphic function for $\re s > 1/\alpha(G)-\gamma$. 
\end{lemma}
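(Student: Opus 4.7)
The strategy is to bound each summand of $S(m;s)$ in absolute value by a product of local factors, drop the condition $m(x)=m$ to obtain an upper bound for $|S(m;s)|$, and then apply Lemma~\ref{lem:k*_sum} (applied to the finite abelian group $H$ in place of $G$) to secure absolute convergence. The polynomial bound in $|\im s|$ is inherited from the corresponding factor in \eqref{eq:positive_density_phi_bound}, and holomorphicity follows from uniform convergence on compact subsets via the Weierstrass theorem.

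Fix a small parameter $\delta>0$, to be chosen later. By \eqref{eq:positive_density_phi_bound}, for $\re s>1/\alpha(G)-\gamma(\delta)$ we have
$$|\phi(x;s)|\ll_{\delta}(1+|\im s|)^{\delta}\prod_{v\in T(x)}q_v^{\delta}.$$
For $v\in T(x)\setminus S$ we have $x_v\notin \OO_v^*\otimes\dual{H}$ by definition of $T(x)$, so Lemma~\ref{lem:local_positive_density} gives $|\widehat{f}_{H,v}(x_v;s|G|/|H|)|\ll q_v^{-(\alpha(G)+1)\re s}$ with an implied constant independent of $v$, $x$ and (locally) $s$. For $v\in S$ the local factor $\widehat{f}_{H,v}(x_v;s|G|/|H|)$ is uniformly bounded in $x_v$: indeed $\Hom(k_v^*,H)$ is finite, $|f_v|\leq 1$, and $|\Phi_H(\chi_v)|^{\re s}$ is bounded below on compact $s$-sets. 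Absorbing the factors $\prod_{v\in S}q_v^{\delta}$ and $\prod_{v\in S}|\widehat{f}_{H,v}(x_v;s|G|/|H|)|$ into the implied constant, we arrive at
$$\left|\phi(x;s)\prod_{v\in T(x)}\widehat{f}_{H,v}(x_v;s|G|/|H|)\right|\ll_{\delta}(1+|\im s|)^{\delta}\prod_{v\in T(x)\setminus S}q_v^{-\sigma},\quad \sigma:=(\alpha(G)+1)\re s-\delta.$$

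The right-hand side fits the framework of Lemma~\ref{lem:k*_sum} (with $H$ in place of $G$): define $g_v(x_v)=1$ if $v\in S$ or $x_v\in\OO_v^*\otimes\dual{H}$, and $g_v(x_v)=q_v^{-\sigma}$ otherwise, so that $g_v$ satisfies the required local bounds as soon as $\sigma>1$. Choose $\delta:=1/(2\alpha(G))$ and let
$$\gamma:=\tfrac{1}{2}\min\!\left\{\gamma(\delta),\;\tfrac{1/\alpha(G)-\delta}{\alpha(G)+1}\right\}>0.$$
Then for every $s$ with $\re s>1/\alpha(G)-\gamma$ we have both $\sigma>1$ and $\re s>1/\alpha(G)-\gamma(\delta)$, so the preceding bound applies, and dropping the constraint $m(x)=m$ yields
$$|S(m;s)|\ll_{\delta}(1+|\im s|)^{\delta}\sum_{x\in k^*\otimes\dual{H}}g(x)\ll(1+|\im s|)^{\delta}$$
by Lemma~\ref{lem:k*_sum}. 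On any compact subset of the half-plane $\re s>1/\alpha(G)-\gamma$, the quantity $\sigma$ is bounded below by some $\sigma_0>1$, so the convergence is uniform there. Each individual summand is holomorphic in $s$ (since $\phi(x;s)$ is holomorphic by Lemma~\ref{lem:positive_density_euler_prod_analyt_cont} and the local Fourier transforms are finite Dirichlet polynomials), hence $S(m;s)$ is holomorphic on $\re s>1/\alpha(G)-\gamma$ by the Weierstrass theorem.

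The principal technical point is the coordination of the two small parameters: a larger $\delta$ makes the pointwise bound on $\phi(x;s)$ more generous but reduces $\sigma$, while the validity of that bound itself only extends leftward by $\gamma(\delta)/\alpha(G)$. The key observation that rescues us is that $(\alpha(G)+1)/\alpha(G)=1+1/\alpha(G)>1$, so the natural decay exponent $(\alpha(G)+1)\re s$ at $\re s=1/\alpha(G)$ already exceeds $1$; any sufficiently small $\delta<1/\alpha(G)$ preserves $\sigma>1$, and the required $\gamma>0$ then exists.
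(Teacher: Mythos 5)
Your argument is correct and follows the paper's proof essentially step for step: drop the constraint $m(x)=m$, combine the $\phi$-bound from Lemma~\ref{lem:positive_density_euler_prod_analyt_cont} with the local decay from Lemma~\ref{lem:local_positive_density}, and feed the resulting majorant into Lemma~\ref{lem:k*_sum}, closing with uniform convergence on compacts. The only cosmetic difference is that you make the choice of $\delta$ and $\gamma$ explicit where the paper simply says ``choosing $\delta$ and $\gamma$ small enough,'' and your chosen values do satisfy the required inequality $(\alpha(G)+1)(1/\alpha(G)-\gamma)-\delta>1$.
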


\begin{proof}
  Since we only consider absolute convergence, we may ignore the condition $m(x) = m$. Fix $\delta > 0$ and let $\gamma(\delta)$ be as in Lemma \ref{lem:positive_density_euler_prod_analyt_cont}. Then
  \begin{equation*}
    \phi(x;s)\ll_\delta (1+|\im s|)^\delta\prod_{v\in T(x)}q_v^\delta, \quad \re s> 1/\alpha(G)-\gamma(\delta).
  \end{equation*}
An inspection of \eqref{eq:positive_density_euler_factors} shows that $\widehat{f}_{H,v}(x_v; s|G|/|H|)$ is bounded independently of $x_v$. For $v\in T(x)\smallsetminus S$, we have the bound 
\begin{equation*}
  |\widehat{f}_{H,v}(x_v;s|G|/|H|)| \leq C q_v^{-(\alpha(G)+1)\re s}, \quad \re s\geq 0,
\end{equation*}
from Lemma \ref{lem:local_positive_density}, with $C>0$ depending only on $k$ and $G$. Fix $\gamma \in (0,\gamma(\delta))$. On $\re s > 1/\alpha(G)-\gamma$, 
 we obtain the majorant
  \begin{equation*}
   \phi(x;s)\prod_{v\in T(x)}\widehat{f}_{H,v}(x_v;s|G|/|H|)  \ll_{\delta}(1+|\im s|)^\delta \prod_{v}g_v(x_v),
  \end{equation*}
where $g_v(x_v):=1$ if $x_v\in \OO_v^* \otimes \dual{H}$ and
\begin{equation*}
  g_v(x_v) := Cq_v^{-(\alpha(G)+1)(1/\alpha(G)-\gamma) + \delta}
\end{equation*}
otherwise. Choosing $\delta$ and $\gamma$ small enough gives $(\alpha(G)+1)(1/\alpha(G)-\gamma)  - \delta> 1$. The result then follows from Lemma \ref{lem:k*_sum}.
\end{proof} 

We now come to the main result of this subsection.
\begin{lemma}\label{lem:analytic_properties_of_FHF}
  If $Q \mid |H|$, then the function $F_{H,f}(s|G|/|H|)$ has a meromorphic continuation to $\re s > 1/\alpha(G)-\gamma$. It is holomorphic in this half-plane, with the possible exception of a pole at $s=1/\alpha(G)$ of order at most $\nu(k,H)=(Q^{\beta_H}-1)/d$. Moreover, there is $\delta>0$ such that
  \begin{equation} \label{eq:FHF_bound}
    F_{H,f}(s|G|/|H|)(1-1/(\alpha(G)s))^{\nu(k,H)} \ll (1+|\im s|)^\delta,\,\, \re s>1/\alpha(G)-\gamma.
  \end{equation}
\end{lemma}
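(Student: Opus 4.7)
My plan is to deduce the lemma directly from the identity \eqref{eqn:F_zeta} together with Lemma \ref{lem:positive_density_absolute_convergence} and standard analytic properties of the Dedekind zeta function $\zeta_{k_0}$.

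First I would observe that the exponents $(Q^{\beta_H-m}-1)/d$ appearing in \eqref{eqn:F_zeta} are non-negative integers: since $d=[k_0:k]$ divides $Q-1$ and $(Q-1)\mid (Q^{\beta_H-m}-1)$ whenever $\beta_H-m\geq 1$, while the case $\beta_H-m=0$ gives exponent $0$. Therefore each factor $\zeta_{k_0}(\alpha(G)s)^{(Q^{\beta_H-m}-1)/d}$ is a genuine meromorphic function on $\CC$, holomorphic away from $s=1/\alpha(G)$ (where $\zeta_{k_0}$ has its simple pole at $\alpha(G)s=1$), with a pole of order exactly $(Q^{\beta_H-m}-1)/d$ there. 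Combined with Lemma \ref{lem:positive_density_absolute_convergence}, which provides a holomorphic continuation of each $S(m;s)$ to $\re s > 1/\alpha(G)-\gamma$, the finite sum on the right of \eqref{eqn:F_zeta} gives a meromorphic continuation of $F_{H,f}(s|G|/|H|)$ to this half-plane, with no possible poles other than at $s=1/\alpha(G)$.

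Next I would read off the pole order. Each summand contributes a pole of order at most $(Q^{\beta_H-m}-1)/d$, and this quantity is strictly decreasing in $m$, so its maximum is attained at $m=0$ with value $(Q^{\beta_H}-1)/d = \nu(k,H)$. Hence the pole at $s=1/\alpha(G)$ has order at most $\nu(k,H)$, as required.

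Finally, for the bound \eqref{eq:FHF_bound} on vertical strips, the key input is a polynomial convexity bound $(s-1/\alpha(G))\,\zeta_{k_0}(\alpha(G)s) \ll (1+|\im s|)^{\delta_1}$ valid on $\re s > 1/\alpha(G)-\gamma$ (after possibly shrinking $\gamma$), obtained in the standard way from the Phragmen--Lindel\"of principle applied to $\zeta_{k_0}$, just as in the proof preceding the statement. Raising this bound to the $(Q^{\beta_H-m}-1)/d$-th power and multiplying by the polynomial bound $S(m;s)\ll (1+|\im s|)^{\delta_2}$ from Lemma \ref{lem:positive_density_absolute_convergence}, the factor $(1-1/(\alpha(G)s))^{\nu(k,H)}$ annihilates the pole of highest order and leaves a polynomial bound in $|\im s|$ on the full finite sum. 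Choosing $\delta$ to dominate all individual exponents yields \eqref{eq:FHF_bound}.

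Since each of the three ingredients (meromorphic continuation, pole order, convexity bound) is already in hand, no real obstacle remains; the only care needed is bookkeeping the exponents and shrinking $\gamma$ uniformly to accommodate the convexity estimate for $\zeta_{k_0}$.
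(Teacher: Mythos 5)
Your proof is correct and follows essentially the same route as the paper's own (very terse) argument: both deduce the lemma directly from the decomposition \eqref{eqn:F_zeta}, the holomorphicity and polynomial bounds on $S(m;s)$ from Lemma \ref{lem:positive_density_absolute_convergence}, and a standard convexity/Phragmen--Lindel\"of estimate for $\zeta_{k_0}$. Your explicit check that $d \mid (Q^{\beta_H-m}-1)$ (so the zeta exponents are non-negative integers) is a useful detail that the paper leaves implicit.
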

\begin{proof}
	Lemma \ref{lem:positive_density_absolute_convergence} and \eqref{eqn:F_zeta} show that $F_{H,f}(s|G|/|H|)$ is a finite sum of meromorphic functions on $\re s> 1/\alpha(G)-\gamma$ whose poles come from finitely many powers of $\zeta_{k_0}(\alpha(G)s)$. The estimate \eqref{eq:FHF_bound} follows from the estimate for $S(m;s)$ in Lemma \ref{lem:positive_density_absolute_convergence} and standard convexity bounds for Dedekind zeta functions.
\end{proof}

If $Q\nmid |H|$ then we already know that $F_{H,f}(s|G|/|H|)$ is holomorphic for $\re s > 1/(\alpha(G)+1)$. Together with the expansion \eqref{eq:positive_density_delsarte}, this immediately implies that $D_{G,f}(s)$ has a meromorphic continuation to $\re s > 1/\alpha(G)-\gamma$, with the rightmost possible pole at $s=1/\alpha(G)$ of order at most $\nu(k,G)$.

As we will show in the next subsection, the order of the pole is indeed $\nu(k,G)$. Then Theorem \ref{thm:conditions} follows immediately from standard power-saving Tauberian theorems, such as \cite[Th\'eor\`eme A.1.]{CLT}. 

\subsection{Positivity of the constant}\label{subsec:positive_density_constant}
In this subsection, we let $\alpha := \alpha(G)$ and $\nu := \nu(k,G)$. We now show that $D_{G,f}(s)$ does indeed have a pole of order $\nu$ at $s=1/\alpha$. To do this, it suffices to prove that the limit
\begin{equation*}
  c_f(k,G) := \lim_{s\to 1/\alpha}\zeta_{k_0}(\alpha s)^{-\nu}D_{G,f}(s)
\end{equation*}
is positive. Our argument is loosely based on \cite[\S6]{Wri89}.
From \eqref{eq:positive_density_delsarte}, \eqref{eq:poisson_result} and Lemma \ref{lem:positive_density_absolute_convergence}, we obtain
\begin{equation} \label{eqn:constant}
  c_f(k,G) = \sum_{H\subseteq G}\frac{\mu(G/H)}{|\OO_k^*\otimes \dual{H}|}
  \sum_{x\in k^* \otimes \dual{H}}\prod_{v}\frac{\widehat{f}_{H,v}(x_v;|G|/(|H|\alpha))}{\zeta_{k_0,v}(1)^{\nu}},
\end{equation}
where $\zeta_{k_0,v}(s)$ is the product of the Euler factors of $\zeta_{k_0}(s)$ at all places of $k_0$ lying above $v$ if $v$ is non-archimedean, and  $\zeta_{k_0,v}(s)=1$ otherwise.
We would like to express $c_f(k,G)$ as a sum of non-negative terms, by applying Poisson summation and M\"obius inversion backwards. In order to apply Poisson summation, however, we need to approximate the innermost summand by simpler functions. Let
\begin{equation*}
  \widehat{E}_{H,v}(x_v) := \frac{\widehat{f}_{H,v}(x_v;|G|/(|H|\alpha))}{\zeta_{k_0,v}(1)^{\nu}},
\end{equation*}
and, for a finite set $T\supseteq S$ of places of $k$,
\begin{equation*}
  \widehat{E}_{H,T,v}(x_v):=
  \begin{cases}
    \widehat{E}_{H,v}(x_v) &\text{ if }v\in T,\\
    1          &\text{ if }v\notin T \text{ and }x_v\in \OO_v^*\otimes \dual{H},\\
    0          &\text{ if }v\notin T \text{ and }x_v\notin \OO_v^*\otimes \dual{H}.
  \end{cases}
\end{equation*}
Moreover, for $x\in k^*\otimes \dual{H}$, let
\begin{equation*}
  \widehat{E}_{H}(x) := \prod_{v}\widehat{E}_{H,v}(x_v)\quad\text{ and }\quad \widehat{E}_{H,T}(x) := \prod_{v}\widehat{E}_{H,T,v}(x_v).
\end{equation*}
We observe that $\widehat{E}_{H,T}(x) = 0$ if $x\notin \OO_T^*\otimes \dual{H}$, so
\begin{equation}\label{eq:positive_dinsity_sum_L_H_T}
  \sum_{x\in k^*\otimes \dual{H}}\widehat{E}_{H,T}(x)
\end{equation}
exists for any finite set $T$, as the sum may be taken over $\OO_T^*\otimes \dual{H}$, which is a finite set. Moreover, by definition, $\widehat{E}_{H,T}(x)$ converges pointwise to $\widehat{E}_{H}(x)$, as $T\to \Omega_K$.
We now show something stronger.

\begin{lemma}
  We have
  \begin{equation*}
    \lim_{T\to\Omega_K}\sum_{x\in k^*\otimes \dual{H}}\widehat{E}_{H,T}(x) 
    = \sum_{x\in k^*\otimes \dual{H}}\widehat{E}_{H}(x).
  \end{equation*}
\end{lemma}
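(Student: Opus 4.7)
We appeal to Lebesgue's dominated convergence theorem on the discrete set $k^*\otimes \dual{H}$ with counting measure. Pointwise convergence $\widehat{E}_{H,T}(x)\to \widehat{E}_{H}(x)$ is already observed in the paragraph preceding the lemma, so the task reduces to exhibiting a majorant $M: k^*\otimes \dual{H}\to [0,\infty)$, independent of $T$, with $|\widehat{E}_{H,T}(x)|\leq M(x)$ and $\sum_{x}M(x)<\infty$.

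We take $M(x)=\prod_v m_v(x_v)$ for suitable local factors $m_v$. The crux is that the normalisation by $\zeta_{k_0,v}(1)^{\nu}$ in the definition of $\widehat{E}_{H,v}$ cancels the leading $q_v^{-1}$ contribution of $\widehat{f}_{H,v}(\cdot;|G|/(|H|\alpha(G)))$. Indeed, using $d\nu=Q^{\beta}-1$, at any $v\notin S$ splitting completely in $k_0$ we have $\zeta_{k_0,v}(1)^{-\nu}=(1-q_v^{-1})^{Q^{\beta}-1}$. Combining this with the expansions in Lemma \ref{lem:local_positive_density}, evaluated at $\alpha(G)s=1$, yields for $x_v\in \OO_v^*\otimes \dual{H}$ the equalities
\begin{equation*}
\widehat{E}_{H,v}(x_v)=\begin{cases} 1+(Q^{\beta_H}-Q^{\beta})q_v^{-1}+O(q_v^{-1-\epsilon}),&x_v\in \OO_v^{*Q}\otimes \dual{H},\\ (1-q_v^{-1})^{Q^{\beta}}+O(q_v^{-1-\epsilon}),&x_v\notin \OO_v^{*Q}\otimes \dual{H},\end{cases}
\end{equation*}
with the analogous bound $\widehat{E}_{H,v}(x_v)=1+O(q_v^{-1-\epsilon})$ at non-split places. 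Since $\beta_H\leq \beta$, the coefficient of $q_v^{-1}$ is non-positive in each case, so after enlarging $S$ to absorb small primes we obtain $|\widehat{E}_{H,v}(x_v)|\leq 1+O(q_v^{-1-\epsilon})$ uniformly in $x$. For $x_v\notin \OO_v^*\otimes \dual{H}$, the bound $\zeta_{k_0,v}(1)^{-\nu}\leq 1$ together with Lemma \ref{lem:local_positive_density} gives $|\widehat{E}_{H,v}(x_v)|\leq Cq_v^{-(\alpha(G)+1)/\alpha(G)}$.

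Setting
\begin{equation*}
m_v(x_v):=\begin{cases}1+Cq_v^{-1-\epsilon},&x_v\in \OO_v^*\otimes \dual{H},\\ Cq_v^{-(\alpha(G)+1)/\alpha(G)},&x_v\notin \OO_v^*\otimes \dual{H},\end{cases}
\end{equation*}
one checks that $|\widehat{E}_{H,T,v}(x_v)|\leq m_v(x_v)$ whether or not $v\in T$: for $v\notin T$ the local factor equals either $1_{\OO_v^*\otimes \dual{H}}(x_v)\leq 1+Cq_v^{-1-\epsilon}$ or $0$. Since $(\alpha(G)+1)/\alpha(G)>1$, Lemma \ref{lem:k*_sum} yields $\sum_{x}M(x)<\infty$, and dominated convergence then concludes the proof.

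The main obstacle is ensuring the cancellation of the $q_v^{-1}$ contributions: without it, the naive majorant $\prod_v(1+Cq_v^{-1})$ diverges and Lemma \ref{lem:k*_sum} does not apply. It is precisely to arrange this cancellation that the exponent $\nu(k,G)=(Q^{\beta}-1)/d$ appears in the definition of $\widehat{E}_{H,v}$.
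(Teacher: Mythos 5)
Your proof is correct and follows essentially the same route as the paper's: both apply Lebesgue's dominated convergence theorem on the discrete set $k^*\otimes\dual{H}$ with a product majorant $\prod_v m_v(x_v)$, and the key step in both is observing that, because $d\nu=Q^{\beta}-1\geq Q^{\beta_H}-1$, the renormalisation by $\zeta_{k_0,v}(1)^{-\nu}$ makes the $q_v^{-1}$ coefficient of the local factor at $x_v\in\OO_v^*\otimes\dual{H}$ non-positive, yielding a uniform bound $\leq 1+O(q_v^{-1-\epsilon})$ after enlarging $S$, while at $x_v\notin\OO_v^*\otimes\dual{H}$ one has $\ll q_v^{-1-1/\alpha}$, so that Lemma \ref{lem:k*_sum} furnishes the integrable majorant. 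The only cosmetic difference is that you compute the coefficient $Q^{\beta_H}-Q^{\beta}$ (resp.\ $-Q^{\beta}$) explicitly, whereas the paper simply records it as $-c(x_v)$ with $c(x_v)\in\{0,\ldots,Q^{\beta}\}$.
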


\begin{proof}
Let $v\notin S$ and recall the bounds from Lemma \ref{lem:local_positive_density} if $Q\mid |H|$ and from Lemma \ref{lem:zero} (with $H$ instead of $G$) if $Q\nmid |H|$. If $x_v\notin \OO_v^*\otimes \dual{H}$, they show that $\widehat{E}_{H,v}(x_v) \ll q_v^{-1-1/\alpha}$, and consequently also
\begin{equation*}
  |\widehat{E}_{H,T,v}(x_v)| \leq C q_v^{-1-1/\alpha},
\end{equation*}
with $C>0$ depending only on $k$ and $G$. If $x_v\in\OO_v^*\otimes \dual{H}$, we compare them with the bounds
\begin{equation}\label{eq:constant_positive_zeta_bounds}
  \zeta_{k_0,v}(1)^{-\nu} =
  \begin{cases}
    1 - (Q^{\beta} - 1)q_v^{-1} + O(q_v^{-2}) &\text{ if } q_v\equiv 1\bmod Q,\\
    1 + O(q_v^{-2}) &\text{ if }q_v\not\equiv 1 \bmod Q,
  \end{cases}
\end{equation}
and conclude that
\begin{equation*}
  \widehat{E}_{H,T,v}(x_v) = 1 - c(x_v)q_v^{-1} +  O(q_v^{-1-1/\alpha}),
\end{equation*}
for some $c(x_v)\in \{0, \ldots, Q^\beta\}$.
All explicit and implied constants are bounded independently of $x$, hence we may assume that the set $S$ of excluded places was chosen large enough to ensure that $\widehat{E}_{H,T,v}(x_v) \geq 0$ for all $v\notin S$ and all $x_v\in \OO_v^*\otimes \dual{H}$. Putting everything together, we see that $\widehat{E}_{H,T}(x)$ is
\begin{equation*}
 \ll\hspace{-0.3cm} \prod_{\substack{v\notin S\\x_v\notin \OO_v^*\otimes \dual{H}}}Cq_v^{-1-1/\alpha}\hspace{-0.3cm}\prod_{\substack{v\notin S\\x_v\in \OO_v^*\otimes \dual{H}}}(1 + O(q_v^{-1-1/\alpha})) \ll\hspace{-0.3cm} \prod_{\substack{x_v\notin \OO_v^*\otimes \dual{H}}}Cq_v^{-1-1/\alpha}.
\end{equation*}
The sum of this last bound over $x\in k^*\otimes \dual{H}$ converges by Lemma \ref{lem:k*_sum}, so the desired result follows from Lebesgue's dominated convergence theorem.
\end{proof}

By the last lemma, we get
\begin{equation*}
  c_f(k,G) = \lim_{T\to \Omega_k}c_{f,T}(k,G),
\end{equation*}
where
\begin{equation}\label{eq:positive_density_def_c_f_T}
  c_{f,T}(k,G) := \sum_{H\subseteq G}\frac{\mu(G/H)}{|\OO_k^*\otimes\dual{H}|}
  \sum_{x\in k^*\otimes \dual{H}}\widehat{E}_{H,T}(x).
\end{equation}
Observe that $\widehat{E}_{H,T}(x)$ defines a function on $\Adele^* \otimes \dual{H}$ which is zero outside
\begin{equation*}
	\Adele_T^*\otimes \dual{H} = \prod_{v\in T}(k_v^*\otimes \dual{H})\prod_{v\notin T}(\OO_v^*\otimes \dual{H}).
\end{equation*}
We now describe the inverse Fourier transform of $\widehat{E}_{H,T}(x)$. 
For $\chi = (\chi_v)_{v} \in \Hom(\Adele^*, H)$, let
\begin{equation*}
  E_{T}(\chi) := \prod_{v}E_{T,v}(\chi_v),
\end{equation*}
where
\begin{equation*}
  E_{T,v}(\chi_v) :=
  \begin{cases}
    \zeta_{k_0,v}(1)^{-\nu}f_v(\chi_v) &v \in T,\\
    1  &v\notin T \text{ and }\chi_v\in \Hom(k_v^*/\OO_v^*,H)\\
    0  &v\notin T \text{ and }\chi_v\notin \Hom(k_v^*/\OO_v^*,H).
  \end{cases}
\end{equation*}
By a simple local calculation, $\widehat{E}_{H,T}(x) = \widehat{E}_{T}(x;1/(|H|(1-Q^{-1})))$, where $\widehat{E}_{T}(x;s)$ is defined as at the start of Subsection \ref{subsec:fourier_transforms}, with $E_{T}$ for $f$ and $H$ for $G$. As in Proposition \ref{prop:Poisson}, we obtain
\begin{equation}\label{eq:positive_density_poisson_backwards}
  \frac{1}{|\OO_k^*\otimes\dual{H}|}\sum_{x\in k^*\otimes \dual{H}}\widehat{E}_{H,T}(x) = \sum_{\chi\in\Hom(\Adele^*/k^*,H)}\frac{E_{T}(\chi)}{\Phi_H(\chi)^{1/(|H|(1-Q^{-1}))}}.
\end{equation}
Indeed, if $Q\nmid |H|$ then we can apply Proposition \ref{prop:Poisson} directly. If $Q\mid |H|$ then we already know that the Fourier transform $\widehat{E}_{T}(x;s)$ exists for $s=1/\alpha(H)$. Moreover, the sums on both sides are finite, as the summands $\widehat{E}_{H,T}(x)$ and $E_T(\chi)$ are supported on compact sets, and both $k^*\otimes \dual{H}$ and $\Hom(\Adele^*/k^*,H)$ are discrete. Hence, conditions \eqref{cond:1} and \eqref{cond:2} in the proof of Proposition \ref{prop:Poisson} hold trivially.

Using Lemma \ref{lem:conductor_to_discriminant} to express the right-hand side of \eqref{eq:positive_density_poisson_backwards} in terms of discriminant series, and substituting this into \eqref{eq:positive_density_def_c_f_T}, we see that $c_{f,T}(k,G)$ equals
\begin{equation}\label{eq:positive_density_non_negative_sum}
  \sum_{H\subseteq G}\mu(G/H)\sum_{J\subseteq H}\sum_{\chi \in \text{J}\text{-ext}(k)}\frac{E_{T}(\chi)}{\Delta(\chi)^{1/(|J|(1-Q^{-1}))}} = \sum_{\chi\in\gextk}\frac{E_{T}(\chi)}{\Phi_G(\chi)^{1/\alpha}},
\end{equation}
where the last equality holds by Delsarte's inversion formula \cite{Delsarte}. Note that the right-hand side is a sum with non-negative terms. 

Recall our assumption in the statement of Theorem \ref{thm:conditions} that there exists a $G$-extension $\varphi$ of $k$ such that $f(\varphi)=1$.
Let $T_0$ be a finite set of places of $k$, containing $S$ and all places at which $\varphi$ is ramified, that is, all places where $\OO_v^*\not\subseteq \ker \varphi_v$. Furthermore, we choose $T_0$ big enough to ensure that
\begin{equation*}
  \Adele^*(T_0)/(\ker\varphi\cap\Adele^*(T_0))  \cong (\prod_{v\in T_0}k_v^*)/(\ker\varphi\cap \prod_{v\in T_0}k_v^*) \cong G.
\end{equation*} 
For any $T\supseteq T_0$, the $G$-extension $\varphi$ appears in the sum on the right-hand side of \eqref{eq:positive_density_non_negative_sum}, and $E_{T}(\varphi) > 0$. 


We now use $\varphi$ to construct many more $G$-extensions $\chi$ of $k$ that contribute to the sum on the right-hand side of \eqref{eq:positive_density_non_negative_sum}. Let $\chi \in \Hom(\Adele^*/k^*,G)$ be any sub-$G$-extension with the property that $\chi_v = \psi_{v}$ for all $v\in T_0$. Then we have $\ker \chi \cap \prod_{v\in T_0}k_v^* = \ker \varphi\cap\prod_{v\in T_0}k_v^*$, so $\Adele^*/\ker\chi \cong G$, and thus $\chi\in\gextk$. Therefore,
\begin{equation*}
c_{f,T}(k,G) \geq \sum_{\substack{\chi\in\Hom(\Adele^*/k^*,G)\\ \chi_v = \varphi_{v} \text{ for } v\in T_0}}\frac{E_{T}(\chi)}{\Phi_G(\chi)^{1/\alpha}}.
\end{equation*}
We construct $\chi \in \Hom(\Adele^*/k^*,G)$ with $\chi_v = \varphi_{v}$ for $v \in T_0$ and $E_{T}(\chi)>0$ as follows. For $\psi = (\psi_v)_v \in \Hom(\Adele^*, G)$, let
\begin{equation*}
  h_{v}(\psi_v) :=
  \begin{cases}
        1 &\text{ if } [\OO_v^*:(\ker \psi_v\cap \OO_v^*)] \in \{1,Q\},\\
        0 &\text{ if } [\OO_v^*:(\ker\psi_v\cap \OO_v^*)] \notin \{1,Q\},\\
  \end{cases}
\end{equation*}
and
\begin{equation*}
  h_{T,v}(\psi_v) :=
  \begin{cases}
    1 &\text{ if }\psi_v = \Const \text{ and }v\in T_0,\\
    0 &\text{ if }\psi_v \neq \Const \text{ and }v\in T_0,\\
    \zeta_{k_0,v}(1)^{-\nu}h_{v}(\psi_v) &\text{ if }v \in T\smallsetminus T_0,\\
    1 &\text{ if } \OO_v^*\subseteq \ker\psi_v \text{ and }v\notin T,\\
    0 &\text{ if } \OO_v^*\not\subseteq \ker\psi_v \text{ and }v\notin T.
  \end{cases}
\end{equation*}
Let $h_T := \prod_v h_{T,v}$. For any $\psi$ with $h_T(\psi) \neq 0$, let $\chi := \psi\varphi$. Then $\chi_v = \varphi_{v}$ for $v \in T_0$. Our condition \eqref{eqn:conditions} on $\Lambda_v$ and the fact that $\varphi$ is unramified outside $T_0$ show that $h_{T,v}(\psi_v) \neq 0$ implies $f_v(\chi_v) = 1$ for $v \in T$. Also, $\Phi_G(\chi_v) = \Phi_G(\varphi_{v})$ if $v\in T_0$, and $\Phi_G(\chi_v) = \Phi_G(\psi_v)$ if $v \notin T_0$. We have shown that
\begin{equation*}
  c_{f,T}(k,G) \geq \prod_{v\in T_0}\left(\zeta_{k_0,v}(1)^{-\nu}\Phi_G(\varphi_v)^{-1/\alpha}\right) \sum_{\substack{\psi\in\Hom(\Adele^*/k^*,G)}}\frac{h_{T}(\psi)}{\Phi_G(\psi)^{1/\alpha}}.
\end{equation*}
The factor in front of the sum is positive and does not depend on $T$. Hence, it is enough to show that
\begin{equation*}
  R := \lim_{T\to\Omega_k}\sum_{\substack{\psi\in\Hom(\Adele^*/k^*,G)}}\frac{h_{T}(\psi)}{\Phi_G(\psi)^{1/\alpha}}>0.
\end{equation*}
We observe that $h_{T}$ is supported on $\Hom(\Adele^*,G)(T)$, and that $h_{T,v}(\psi_v) = 1$ whenever $v\notin T$ and $\psi_v \in \Hom(k_v^*/\OO_v^*,G)$. In particular, the above sum is finite for every $T$.
As in Subsection \ref{subsec:fourier_transforms}, but with $s=1/\alpha$ and $h_{T,v}$ instead of $f_v$, we define the local and global Fourier transforms $\widehat{h}_{T,v}(x_v;1/\alpha)$ and  $\widehat{h}_T(x;1/\alpha)$ of $h_T\Phi_G^{-1/\alpha}$. Then
$\widehat{h}_T(x;1/\alpha) = \prod_v\widehat{h}_{T,v}(x_v;1/\alpha)$ and
\begin{equation*}
  \widehat{h}_{T,v}(x_v;1/\alpha) =
  \begin{cases}
    |G|^{-1}
 &\text{ if }v\in T_0,\\ 
    \zeta_{k_0,v}(1)^{-\nu}\widehat{h}_{v}(x_v;1/\alpha) &\text{ if }v\in T\smallsetminus T_0,\\
    1 &\text{ if } x_v\in\OO_v^*\otimes \dual{G} \text{ and }v\notin T,\\
    0 &\text{ if } x_v\notin \OO_v^*\otimes \dual{G} \text{ and }v\notin T,
  \end{cases}
\end{equation*}
where
\begin{equation*}
  \widehat{h}_{v}(x_v;1/\alpha) := \frac{1}{|G|}\sum_{\psi_v\in \Hom(k_v^*,G)}\frac{h_{v}(\psi_v)\pair{\psi_v}{x_v}}{\Phi_G(\psi_v)^{1/\alpha}}.
\end{equation*}
By the Poisson summation formula, as in Proposition \ref{prop:Poisson}, we obtain
\begin{equation}\label{eq:positive_density_constant_after_poisson}
   R = \lim_{T\to \Omega_k}\frac{1}{|\OO_k^*\otimes \dual{G}|}\sum_{x\in k^* \otimes \dual{G}}\widehat{h}_T(x;1/\alpha).
\end{equation}
The sum over $x$ is finite, since all summands for $x\notin \OO_T^*\otimes \dual{G}$ vanish. In fact, even more holds. Let us compute the local factors $\widehat{h}_v(x_v;1/\alpha)$. \begin{lemma}\label{lem:local_positive_density_2}
  Let $v\notin T_0$. If $x_v \in \OO_v^* \otimes \dual{G}$, then
  \begin{equation*}
    \widehat{h}_{v}(x_v;1/\alpha) = 
    \begin{cases}
      1 + (Q^{\beta}-1)q_v^{-1} &\text{ if } q_{v} \equiv 1 \bmod Q \text{ and }x_v\in \OO_v^{*Q}\otimes \dual{G},\\
      1 - q_{v}^{-1} &\text{ if }q_{v} \equiv 1\bmod Q\text{ and }x_v\notin\OO_v^{*Q}\otimes \dual{G},\\
      1 &\text{ if }q_{v}\not\equiv 1\bmod Q.
    \end{cases}
  \end{equation*}
  If $x_v \not\in\OO_v^*\otimes \dual{G}$, then $\widehat{h}_{v}(x_v;1/\alpha) = 0$.
  
\end{lemma}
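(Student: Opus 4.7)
The plan is to compute $\widehat{h}_v(x_v;1/\alpha)$ by exploiting the decomposition
\[
\Hom(k_v^*,G) \;=\; \Hom(\OO_v^*,G)\,\oplus\,\Hom(k_v^*/\OO_v^*,G)
\]
induced by a choice of uniformiser (cf.~\eqref{eq:local_character_direct_sum}). Writing a general character $\psi_v=\chi_v\eta_v$ with $\chi_v\in\Hom(\OO_v^*,G)$ and $\eta_v\in\Hom(k_v^*/\OO_v^*,G)$, I observe that $h_v(\chi_v\eta_v)=h_v(\chi_v)$ since $h_v$ depends only on $\ker\psi_v\cap\OO_v^*=\ker\chi_v$, and $\Phi_G(\chi_v\eta_v)=\Phi_G(\chi_v)$ as recalled in Section~\ref{sec:disc}. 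Because the pairing is bilinear, $\langle\chi_v\eta_v,x_v\rangle=\langle\chi_v,x_v\rangle\langle\eta_v,x_v\rangle$, so the Fourier transform factors:
\[
\widehat{h}_v(x_v;1/\alpha)\;=\;\frac{1}{|G|}\Bigl(\sum_{\eta_v}\langle\eta_v,x_v\rangle\Bigr)\Bigl(\sum_{\substack{\chi_v\in\Hom(\OO_v^*,G)\\ |\mathrm{Im}\,\chi_v|\in\{1,Q\}}}\frac{\langle\chi_v,x_v\rangle}{\Phi_G(\chi_v)^{1/\alpha}}\Bigr).
\]

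For the first factor I would invoke Pontryagin duality: the perp of $\Hom(k_v^*/\OO_v^*,G)$ inside $k_v^*\otimes\dual{G}$ is precisely the image of $\OO_v^*\otimes\dual{G}$ (this is the $\OO_v^*$-summand under the dual splitting). Character orthogonality then evaluates $\sum_{\eta_v}\langle\eta_v,x_v\rangle$ as $|G|$ if $x_v\in\OO_v^*\otimes\dual{G}$ and as $0$ otherwise. The latter case immediately yields $\widehat{h}_v(x_v;1/\alpha)=0$, proving the second assertion of the lemma.

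Assuming now $x_v\in\OO_v^*\otimes\dual{G}$, I reduce the $\chi_v$-sum exactly as in the proof of Lemma~\ref{lem:local}: since $v\notin T_0\supseteq S$, we have $(\exp(G),q_v)=1$, so every $\chi_v\in\Hom(\OO_v^*,G)$ factors through $\OO_v^*/(1+\pi_v\OO_v)$ and satisfies $\ker\chi_v=\OO_v^{*m}$ for some $m\mid(\exp(G),q_v-1)$. The constraint $|\mathrm{Im}\,\chi_v|\in\{1,Q\}$ forces $m\in\{1,Q\}$, but $m=Q$ is possible only when $q_v\equiv 1\bmod Q$. Hence, if $q_v\not\equiv 1\bmod Q$, only the trivial $\chi_v$ contributes, giving $\widehat{h}_v(x_v;1/\alpha)=1$ as required.

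In the remaining case $q_v\equiv 1\bmod Q$, the nontrivial $\chi_v$ with $\ker\chi_v=\OO_v^{*Q}$ are exactly the nonzero elements of $A:=\Hom(\OO_v^*/\OO_v^{*Q},G)$; the conductor formula in the proof of Lemma~\ref{lem:local} gives $\Phi_G(\chi_v)=q_v^{|G|(Q-1)/Q}=q_v^\alpha$, so $\Phi_G(\chi_v)^{1/\alpha}=q_v$. Thus the sum becomes
\[
1+\frac{1}{q_v}\Bigl(\sum_{\chi_v\in A}\langle\chi_v,x_v\rangle-1\Bigr).
\]
Applying $\Hom(-,G)$ and $(-)\otimes\dual{G}$ to the exact sequence $1\to\OO_v^{*Q}\to\OO_v^*\to\OO_v^*/\OO_v^{*Q}\to 1$ identifies $A^\perp\subset\OO_v^*\otimes\dual{G}$ with the image of $\OO_v^{*Q}\otimes\dual{G}$. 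Since $|A|=|G[Q]|=Q^\beta$, character orthogonality yields $\sum_{\chi_v\in A}\langle\chi_v,x_v\rangle=Q^\beta$ if $x_v\in\OO_v^{*Q}\otimes\dual{G}$ and $0$ otherwise, producing the values $1+(Q^\beta-1)q_v^{-1}$ and $1-q_v^{-1}$ respectively. The only delicate point in this plan is ensuring that the Pontryagin duality identifications in Section~\ref{subsec:top} correctly pass to the perp of $A$ via the image of $\OO_v^{*Q}\otimes\dual{G}$ (which is not injective in general); everything else is routine bookkeeping once the factorisation of $\widehat{h}_v$ is in hand.
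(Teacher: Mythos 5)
Your proof is correct and follows essentially the same route as the paper, which simply invokes Lemma~\ref{lem:local} and the precedent of Lemma~\ref{lem:local_positive_density}; your explicit factorisation of $\widehat{h}_v$ into $(\sum_{\eta_v}\langle\eta_v,x_v\rangle)\cdot(\sum_{\chi_v}\ldots)$ is available precisely because $h_v$ is fully $\Hom(k_v^*/\OO_v^*,G)$-invariant, and is just a cleaner packaging of the $\tau_{f_v}$ bookkeeping from Lemma~\ref{lem:local}. The ``delicate point'' you flag is handled correctly: dualising the left-exact sequence $0\to\Hom(\OO_v^*/\OO_v^{*Q},G)\to\Hom(\OO_v^*,G)\to\Hom(\OO_v^{*Q},G)$ shows that $A^\perp$ is exactly the image of $\OO_v^{*Q}\otimes\dual G$ in $\OO_v^*\otimes\dual G$, which is precisely the meaning the paper assigns to the notation $x_v\in\OO_v^{*Q}\otimes\dual G$.
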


\begin{proof}
  This follows from Lemma \ref{lem:local}, similarly to the proof of Lemma \ref{lem:local_positive_density}.
\end{proof}
Hence, $\widehat{h}_T(x;1/\alpha) = 0$ unless $x \in \OO_{T_0}^*\otimes \dual{G}$. Since this is independent of $T$, we may exchange limit and sum in \eqref{eq:positive_density_constant_after_poisson} to obtain
\begin{equation}\label{eq:positive_density_constant_final}
  R = \frac{1}{|\OO_k^*\otimes \dual{G}|\ |G|^{|T_0|}}\sum_{x\in \OO_{T_0}^*\otimes \dual{G}}\prod_{v\notin T_0}\zeta_{k_0,v}(1)^{-\nu}\widehat{h}_v(x_v;1/\alpha).
\end{equation}
Compare Lemma \ref{lem:local_positive_density_2} to the bounds \eqref{eq:constant_positive_zeta_bounds} for $\zeta_{k_0,v}(1)^{-\nu}$. For $x \neq \Const$, the Euler product in \eqref{eq:positive_density_constant_final} may diverge to $0$ or converge. However, it is clear that the Euler product converges for $x=\Const$. Possibly enlarging $T_0$ by some small places, we may assume that all occurring Euler factors are positive. Thus, the product for $x = \Const$ is positive and hence $R > 0$. This completes the proof of \mbox{Theorem \ref{thm:conditions}}. \qed


\section{Some applications of  Theorem \ref{thm:conditions}} \label{sec:bicyclic}
We now present two applications of Theorem \ref{thm:conditions}, upon which the proofs of Theorem \ref{thm:HNP_0} and Theorem \ref{thm:WA}
will be based. Given a finite abelian extension $K/k$ and a place $v$ of $k$, we denote by $D_v$ the decomposition group at $v$.

Even though Theorem \ref{thm:conditions} gives a positive proportion result, it can also be used
to prove $0\%$ results.

\begin{theorem} \label{thm:bicyclic}
	Let $G$ be a finite abelian group and let $Q$ be the smallest prime dividing $|G|$. Assume that $G$
	contains a subgroup $A$ which is isomorphic to $(\ZZ/Q\ZZ)^2$. Let $S$ be a finite
	set of  places of $k$. Then $0\%$ of $G$-extensions $(K/k,\psi)$ have the property that $\psi(D_v) \neq A$ for all $v \not \in S$.
\end{theorem}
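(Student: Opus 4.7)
For each finite set $T$ of places of $k$ disjoint from $S$, let $\mathcal{F}_T(B)$ denote the number of $G$-extensions $(K/k,\psi)$ of discriminant at most $B$ satisfying $\psi(D_v) \neq A$ for all $v \in T$; thus $\mathcal{F}(B) \leq \mathcal{F}_T(B)$, where $\mathcal{F}(B)$ is the quantity to be bounded. The plan is to handle $\mathcal{F}_T(B)$ via Theorem \ref{thm:conditions} and then let $T$ exhaust $\Omega_k \setminus S$. I would define local conditions $\Lambda_v^T$ to be the set of all sub-$G$-extensions of $k_v$ for $v \notin T$, and the set of sub-$G$-extensions with $\psi_v(D_v) \neq A$ for $v \in T$. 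Since $T$ is finite, \eqref{eqn:conditions} holds for all $v \notin T$ and hence for almost all places. A $G$-extension of $k$ realising $\Lambda^T$ exists: by class field theory one may take a subfield of a suitable ray class field with modulus supported outside $T$ to obtain a $G$-extension unramified at every $v \in T$, and at an unramified place the decomposition group is cyclic and therefore distinct from the non-cyclic $A$. Applying Theorem \ref{thm:conditions} yields
\[
\mathcal{F}_T(B) \sim c_T B^{1/\alpha(G)}(\log B)^{\nu(k,G)-1}, \quad B \to \infty,
\]
for some $c_T > 0$; the same theorem gives $N(k,G,B) \sim c B^{1/\alpha(G)}(\log B)^{\nu(k,G)-1}$ for some $c > 0$. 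Consequently $\limsup_B \mathcal{F}(B)/N(k,G,B) \leq c_T/c$ for every such $T$, reducing the theorem to proving that $c_T \to 0$ as $T \nearrow \Omega_k \setminus S$.

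To attack this, I would invoke the explicit formula for the leading constant derived in Subsection \ref{subsec:positive_density_constant}. In that formula, $c$ and $c_T$ appear as absolutely convergent sums indexed by twists $x \in k^* \otimes \dual{H}$ (over subgroups $H \subseteq G$) of Euler-type products, whose local factors differ only at $v \in T$ by the exclusion of those local characters in $\Hom(k_v^*, G)$ that give rise to $D_v = A$, namely those with $|I_v| = Q$, $I_v \subseteq A$, and Frobenius element in $A \setminus I_v$. The aim is then to show, via a direct computation in the spirit of Lemma \ref{lem:local_positive_density}, that at each place $v$ with $q_v \equiv 1 \bmod Q$ the exclusion reduces the local factor by a multiplicative factor of the form $1 - \delta/q_v + O(q_v^{-2})$ for some fixed $\delta > 0$ uniform in the twist $x$. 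Combined with the divergence $\sum_{q_v \equiv 1 \bmod Q} 1/q_v = \infty$ (Chebotarev density), this forces
\[
\prod_{\substack{v \in T \\ q_v \equiv 1 \bmod Q}} (1 - \delta/q_v) \longrightarrow 0 \quad\text{as } T \nearrow \Omega_k \setminus S,
\]
and hence $c_T \to 0$.

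The main obstacle will be establishing the uniform lower bound $\delta$ on the local reduction factor across all twists $x$ appearing in the infinite sum defining $c_T$. Careful bookkeeping in the Fourier-theoretic computation of \S\ref{subsec:Euler} is required, since different twists pair differently with the excluded characters, and one must ensure that cancellations in the twisted local sums do not offset the global decay at places with $q_v \equiv 1 \bmod Q$. Once this uniformity is in hand, the absolute convergence of the sum over $x$ in \eqref{eqn:constant} permits the interchange of summation and the product over $T$, completing the argument.
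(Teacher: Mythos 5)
Your proposal follows essentially the same route as the paper: truncate to finite sets of conditioned places, apply Theorem~\ref{thm:conditions} to compare leading constants, and show the truncated constants tend to zero by combining a uniform-in-$x$ decrement of the local factors at primes split in $k(\mu_Q)$ with Chebotarev's theorem and dominated convergence (Lemma~\ref{lem:k*_sum}). The uniformity step you flag as the main obstacle is carried out in the paper's Lemma~\ref{lem:local_bicyclic}, where the explicit decrement $\delta_H=(Q^2-1)(Q^2-Q)/|H|$ is produced by counting the excluded local characters and shown to hold independently of the twist $x$.
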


Note that Theorem~\ref{thm:bicyclic} does not contradict Theorem \ref{thm:conditions}; one may check using Lemma \ref{lem:special_form} below that the crucial assumption \eqref{eqn:conditions} does not hold in this case.
The following will be used to prove Theorem \ref{thm:WA}.

\begin{theorem} \label{thm:cyclic}
	Let $G$ be a finite abelian group and let $Q$ be the smallest prime dividing $|G|$.
	Let $S$ be a finite	set of  places of $k$.
	Then as $\varphi$ varies over all $G$-extensions of $k$, the following hold.
	\begin{enumerate}
		\item If the $Q$-Sylow subgroup of $G$ is cyclic, then a positive proportion of
		the extensions $K_\varphi/k$ have the property that all of their decomposition groups are cyclic.
		\item If the $Q$-Sylow subgroup of $G$ is not cyclic, then $0\%$ of 
		the extensions $K_\varphi/k$ have the property that $D_v$ is cyclic for all $v \notin S$.
	\end{enumerate}
\end{theorem}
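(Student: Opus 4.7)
Part~(2) will follow directly from Theorem~\ref{thm:bicyclic}. Since the $Q$-Sylow subgroup of $G$ is a non-cyclic finite abelian $Q$-group, by the structure theorem for finite abelian $p$-groups it contains a subgroup $A \cong (\ZZ/Q\ZZ)^2$, which also sits inside $G$. Theorem~\ref{thm:bicyclic} then asserts that $0\%$ of $G$-extensions $(K/k,\psi)$ satisfy $\psi(D_v) \ne A$ for all $v \notin S$. Equivalently, a $100\%$ proportion of $G$-extensions have some $v \notin S$ with $\psi(D_v) = A$, whose decomposition group is non-cyclic, giving part~(2).

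For part~(1), I plan to apply Theorem~\ref{thm:conditions} with
\begin{equation*}
  \Lambda_v := \{(K_v/k_v, \psi_v) : \psi_v(D_v) \text{ is cyclic}\},
\end{equation*}
so that $N(k,G,\Lambda,B)$ counts exactly the $G$-extensions whose decomposition groups are all cyclic, yielding the desired positive proportion once the two hypotheses of Theorem~\ref{thm:conditions} are verified. For hypothesis~\eqref{eqn:conditions}, I will exclude the finitely many places lying over $Q$ and consider a sub-$G$-extension at a remaining place $v$ with $|I_v| \mid Q$. If $|I_v|=1$, then $D_v$ is cyclic (generated by Frobenius). If $|I_v|=Q$, then $I_v$ is tame cyclic of order $Q$, and the abelian group $D_v \subseteq G$ decomposes into Sylow subgroups $D_v = \bigoplus_p D_v(p)$: the $Q$-part $D_v(Q)$ embeds into the cyclic $Q$-Sylow of $G$ (by hypothesis) and is hence cyclic, while for $p \ne Q$ the containment $I_v \subseteq D_v(Q)$ yields $D_v(p) \cong (D_v/I_v)(p)$, which is cyclic as a summand of the cyclic group $D_v/I_v$. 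Thus $D_v$ is a direct sum of cyclic groups of pairwise coprime orders, and so is cyclic.

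The other hypothesis of Theorem~\ref{thm:conditions} requires producing some $G$-extension with cyclic $D_v$ at every place of $k$. Writing $G \cong \bigoplus_{i=1}^l \ZZ/n_i\ZZ$, I will construct $K = K_1 \cdots K_l$ as a compositum of cyclic extensions $K_i/k$ of degree $n_i$, each totally tamely ramified at a single distinct prime $\mathfrak{p}_i$, with $\mathfrak{p}_j$ splitting completely in $K_i$ for all $j \ne i$. The construction will proceed iteratively via Chebotarev and class field theory: at step $i$, choose $\mathfrak{p}_i$ with $N\mathfrak{p}_i \equiv 1 \pmod{n_i}$ splitting completely in each previously built $K_j$ ($j<i$); then select $K_i$ as a suitable cyclic degree-$n_i$ quotient of the ray class field of conductor $\mathfrak{p}_i$, arranged so that the class of each earlier $\mathfrak{p}_j$ lies in the kernel. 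Then at an unramified place of $K/k$, $D_v$ is cyclic by Frobenius; at each $\mathfrak{p}_i$ only $K_i$ contributes non-trivially, giving $D_v \cong \ZZ/n_i\ZZ$. The main obstacle will be this existence construction: simultaneously arranging total ramification at $\mathfrak{p}_i$ and complete splitting at the other $\mathfrak{p}_j$ within a cyclic degree-$n_i$ quotient of the ray class field requires careful handling of the class field theoretic data, which one can alternatively circumvent by a Grunwald-Wang type argument.
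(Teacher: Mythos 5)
Your argument for part~(2) and your treatment of the local condition in part~(1) match the paper exactly: for part~(2) you invoke Theorem~\ref{thm:bicyclic} with a subgroup $A\cong(\ZZ/Q\ZZ)^2$ of the non-cyclic $Q$-Sylow, and for part~(1) you take $\Lambda_v$ to be the cyclic sub-$G$-extensions and verify hypothesis~\eqref{eqn:conditions} by showing that if $|I_v|\mid Q$ then $D_v$ is cyclic. Your Sylow decomposition argument there is correct and is essentially an expansion of the paper's terser remark that ``$G_v$ is an extension of a cyclic group by $\ZZ/Q\ZZ$'' is cyclic under the hypothesis on the $Q$-Sylow subgroup.

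Where you diverge from the paper is in verifying the second hypothesis of Theorem~\ref{thm:conditions}, namely the existence of \emph{some} $G$-extension of $k$ all of whose decomposition groups are cyclic. The paper disposes of this in one line by invoking Proposition~\ref{prop:solvable}, which records that Shafarevich's resolution of the inverse Galois problem for solvable groups produces (for any finite solvable $G$, in particular any finite abelian $G$) a $G$-extension with all decomposition groups cyclic, with the precise reference in \cite{Son08} and \cite{NSW00}. You instead sketch a direct construction via a compositum of cyclic extensions $K_i/k$ each totally tamely ramified at a single distinct prime $\mathfrak{p}_i$, with the other $\mathfrak{p}_j$ splitting completely. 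Your verification that such a compositum, if it exists, has all decomposition groups cyclic is correct (unramified places are handled by Frobenius, archimedean decomposition groups have order at most $2$, and at $\mathfrak{p}_i$ only $K_i$ contributes). But the construction itself is left as a sketch and is not as routine as you suggest: arranging that the ray class group of conductor $\mathfrak{p}_i$ admits a cyclic degree-$n_i$ quotient on which the inertia image $(\OO_k/\mathfrak{p}_i)^*/\overline{\OO_k^*}$ surjects, while simultaneously killing the classes of the previously chosen $\mathfrak{p}_j$, requires controlling both the class group of $k$ and the image of the global units modulo $\mathfrak{p}_i$, and choosing $\mathfrak{p}_i$ to make all of this work is exactly the kind of Chebotarev bookkeeping that Shafarevich's construction packages up for free. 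You flag this as ``the main obstacle,'' which is an honest assessment: the gap is real, and without carrying the construction through (or falling back on the paper's Proposition~\ref{prop:solvable}), the proof of part~(1) is incomplete. Note also that a bare ``Grunwald--Wang type argument'' (Proposition~\ref{prop:GW} in the paper) only prescribes finitely many local completions and does \emph{not} control ramification outside the prescribed set, so it cannot substitute for Shafarevich's result here without additional work.
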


\subsection{Proof of Theorem \ref{thm:bicyclic}}
We are free to increase the size of $S$
so that it contains all archimedean places, all places dividing the order of $G$, 
and so that various Euler factors appearing in the following proofs are positive.
Let $\alpha:=\alpha(G)$ and let $\nu:=\nu(k,G)$.
We keep the notation $\beta$ and $\beta_H$, for a subgroup $H \subseteq G$, from Subsection \ref{subsec:Euler}.

Now let $R$ be a finite set of places of $k$ which contains $S$.
Let $$\Lambda_v := \{ \mbox{sub-$G$-extensions } \varphi_v \mbox{ of } k_v: \varphi_v(\Gal(\bar{k}_v/k_v)) \neq A\}, \quad v \in R \setminus S,$$
and let $\Lambda_v$ be the set of all sub-$G$-extensions of $k_v$ for $v \not \in R \setminus S$. Let $\Lambda_R = (\Lambda_v)_v$. We now apply Theorem \ref{thm:conditions}
to obtain
$$\lim_{B \to \infty}\frac{N(k, G, \Lambda_R, B)}{N(k, G,B)} =: c_R$$
for some constant $c_R \geq 0$. To prove Theorem \ref{thm:bicyclic}, it suffices to show that
\begin{equation} \label{eqn:lim}
	\lim_{R \to \Omega_k} c_R = 0.
\end{equation}
For each place $v$, let $f_{R,v}$ be the indicator function associated to $\Lambda_v$, as defined in Section \ref{sec:set-up}.
Then from \eqref{eqn:constant} we obtain
\begin{equation} 
\label{eqn:upper_bound}
  c_R  \ll \sum_{H\subseteq G}\sum_{x\in k^*\otimes \dual{H}}\prod_{v}\frac{|\widehat{f}_{R,H,v}(x_v;|G|/(|H|\alpha))|}{\zeta_{k_0,v}(1)^{\nu}}.
\end{equation}
We begin by calculating the local Fourier transforms at places in $R$.

\begin{lemma} \label{lem:local_bicyclic}
	Let $H \subseteq G$ be a subgroup for which $\beta_H = \beta$.
	Let $v\in R \setminus S$, let $\re s\geq 0$ and let $\delta_H := (Q^2 -1)(Q^2 - Q)/|H|$.
	If $x_v \in \OO_v^*\otimes \dual{H}$, then there exists some $c_v(x_v) \in \CC$ with 
	$|c_v(x_v)| \leq (Q^{\beta}-1 - \delta_H)$, such that 
  \begin{equation*}
    \widehat{f}_{R,H,v}(x_v;s|G|/|H|) = \begin{cases}
      1 + c_v(x_v)q_v^{-\alpha(G)s} + O(q_v^{-(\alpha(G)+1)s}),&\text{ if } q_{v} \equiv 1 \bmod Q,\\
      1 + O(q_v^{-(\alpha(G)+1)s}),&\text{ if }q_{v}\not\equiv 1\bmod Q.
    \end{cases}
  \end{equation*}
  If $x_v \not\in\OO_v^*\otimes \dual{H}$, then $\widehat{f}_{R,H,v}(x_v;s|G|/|H|) = O(q_v^{-(\alpha(G)+1)s})$. 
 \end{lemma}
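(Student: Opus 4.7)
The proof will parallel that of Lemma \ref{lem:local_positive_density}. My plan is to apply Lemma \ref{lem:local} with $G$ replaced by $H$ and $f_v$ replaced by $f_{R,v}$. As in the earlier lemma, divisors $m$ of $(q_v-1,\exp H)$ with $m\notin \{1, Q\}$ contribute only to the error $O(q_v^{-(\alpha(G)+1)s})$, so I only need to handle $m=1$ and $m=Q$ carefully.

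For $m=1$ (so $\chi_v = \Const$), the image of $\chi_v\psi_v$ is the cyclic group $\langle \psi_v(\pi_v)\rangle$, which cannot equal $A\cong (\ZZ/Q\ZZ)^2$. Hence $f_{R,v}(\Const\cdot \psi_v) = 1$ for every $\psi_v$, and $\tau_{f_{R,v}}(\Const, x_v)$ equals $\mathbbm{1}[x_v\in \OO_v^*\otimes \dual{H}]$ exactly as in the proof of Lemma \ref{lem:local_positive_density}; this yields the leading $1$ (respectively $0$) in the expansion.

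For $m=Q$ (which occurs only when $q_v\equiv 1\bmod Q$), the idea is to analyse the coefficient $c_v(x_v)$ of $q_v^{-\alpha(G)s}$ by writing $f_{R,v} = 1 - \mathbbm{1}_{\im = A}$, which gives $c_v(x_v) = c_v^{(1)}(x_v) - c_v^{(A)}(x_v)$. The first summand equals what was computed in Lemma \ref{lem:local_positive_density}: it is $Q^\beta - 1$ if $x_v\in \OO_v^{*Q}\otimes\dual{H}$ and $-1$ otherwise. The correction term is
\begin{equation*}
c_v^{(A)}(x_v) = \frac{1}{|H|}\sum_{\substack{(\chi_v,\psi_v)\\\ker\chi_v = \OO_v^{*Q}\\ \im(\chi_v\psi_v)=A}}\pair{\chi_v\psi_v}{x_v}.
\end{equation*}
I would parameterise the index set via $(a,b)\in (A\setminus\{0\})\times (A\setminus \langle a\rangle)$, where $a = \chi_v(\zeta)$ for a fixed generator $\zeta$ of $\OO_v^*/\OO_v^{*Q}$ and $b = \psi_v(\pi_v)$. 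This set has cardinality $(Q^2-1)(Q^2-Q)$, and we crucially use here that $\beta_H=\beta$ forces $A\subseteq H[Q]\subseteq H$. The crude bound $|c_v^{(A)}(x_v)|\leq \delta_H$ is then immediate.

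To obtain the sharper bound $|c_v(x_v)|\leq Q^\beta - 1 - \delta_H$, I would evaluate $c_v^{(A)}(x_v)$ more precisely: using that $\pair{\psi_v}{x_v}=1$ for $x_v\in \OO_v^*\otimes\dual{H}$ and applying character orthogonality to the resulting sum over $a\in A$, one sees that $c_v^{(A)}(x_v)$ equals either $\delta_H$ or $-(Q^2-Q)/|H|$, according to whether $x_v$ maps to zero in $(\OO_v^*/\OO_v^{*Q})\otimes \dual{A}$ or not. The extremal value $|c_v(x_v)| = Q^\beta - 1 - \delta_H$ is attained when $x_v\in \OO_v^{*Q}\otimes \dual{H}$. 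The main obstacle is the subtle remaining case, where $x_v\notin \OO_v^{*Q}\otimes\dual{H}$ yet $x_v$ still maps to zero in $(\OO_v^*/\OO_v^{*Q})\otimes\dual{A}$, giving $|c_v(x_v)| = 1 + \delta_H$; here I would exploit that this configuration forces $A\subsetneq H$, whence $|H|\geq 2|A| = 2Q^2$, and a short arithmetic check then confirms $1 + \delta_H \leq Q^\beta - 1 - \delta_H$.
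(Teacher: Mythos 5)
Your proof is correct, but you have chosen a route that is noticeably more elaborate than necessary, and it is instructive to see why. After applying Lemma~\ref{lem:local} (with $H$ in place of $G$) and isolating the $m=Q$ term, the coefficient of $q_v^{-\alpha(G)s}$ is
\begin{equation*}
c_v(x_v) \;=\; \frac{1}{|H|}\sum_{\substack{\chi_v\in \Hom(k_v^*,H)\\ \ker\chi_v\cap\OO_v^{*}=\OO_v^{*Q}}} f_v(\chi_v)\,\pair{\chi_v}{x_v}.
\end{equation*}
The paper simply applies the triangle inequality to this expression as it stands: the sum has at most
$(Q^\beta-1)|H|-(Q^2-1)(Q^2-Q)$ nonzero terms, each a root of unity, because the tameness argument (exactly as you note) forces any $\chi_v$ with $\chi_v(k_v^*)=A$ to satisfy $\ker\chi_v\cap\OO_v^*=\OO_v^{*Q}$, and there are $(Q^2-1)(Q^2-Q)$ such surjections onto $A$. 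Dividing by $|H|$ gives $|c_v(x_v)|\leq Q^\beta-1-\delta_H$ at once, with no case analysis.

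You instead write $f_{R,v}=1-\mathbbm{1}_{\im=A}$ and bound the two resulting sums; as you correctly observe, the naive triangle bound on that difference is $Q^{\beta}-1+\delta_H$, which is too weak, and this forces you to evaluate $c_v^{(1)}$ and $c_v^{(A)}$ exactly by orthogonality and then run a three-way case analysis. Everything you do in that analysis is sound: $\beta_H=\beta$ does give $H[Q]=G[Q]\supseteq A$ hence $A\subseteq H$; the two possible values $\delta_H$ and $-(Q^2-Q)/|H|$ of $c_v^{(A)}$ are correct; and your ``subtle case'' genuinely arises only when $A\subsetneq H[Q]$ (so in fact $\beta\geq 3$ and $|H|\geq Q^3\geq 2Q^2$), after which $2\delta_H\leq (Q^2-1)(Q-1)/Q\leq Q^\beta-2$ closes the gap. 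But all of this work is an artefact of having split the sum into two pieces before bounding; bounding the unsplit sum directly, as the paper does, makes the lemma a one-line triangle-inequality-plus-count, and there is no extremal case to worry about.
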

\begin{proof}
	The proof is very similar to the proof of Lemma \ref{lem:local_positive_density},
	so we shall be brief. If $x_v \notin\OO_v^*\otimes \dual{H}$, then since we are not removing
	any unramified sub-$G$-extensions, a similar proof to Lemma \ref{lem:local_positive_density} gives the upper
	bound. So assume that $x_v \in\OO_v^*\otimes \dual{H}$.
	The case $q_{v}\not\equiv 1\bmod Q$ is handled as in  Lemma \ref{lem:local_positive_density}.
	From now on, we assume that $q_{v}\equiv 1\bmod Q$. We apply
  	Lemma \ref{lem:local} to obtain
    \begin{equation*}
  \widehat{f}_{R,H,v}(x_v;s|G|/|H|)) = 1 + \frac{1}{|H|} \sum_{\substack{\chi_v\in\Hom(k_v^*,H)\\\ker\chi_v \cap \OO_v^{*} =\OO_v^{*Q}}}
  \hspace{-10pt}
  f_{v}(\chi_v)\pair{\chi_v}{x_v}q_v^{-\alpha(G)s}+ O(q_v^{-(\alpha(G)+1)s}).
\end{equation*}
	Using $|\pair{\chi_v}{x_v}| \leq 1$ and the definition of $f_v$, we have 
	$$ \Big|\sum_{\substack{\chi_v\in\Hom(k_v^*, H)\\\ker\chi_v \cap \OO_v^{*} =\OO_v^{*Q}}}
  f_{v}(\chi_v)\pair{\chi_v}{x_v}\Big| \leq \#\left\{ \chi_v \in \Hom(k_v^*, H) : 
	\begin{array}{ll}
		\ker\chi_v \cap \OO_v^{*} =\OO_v^{*Q},\\
		\chi_v(k_v^*) \neq A.
  	\end{array}  	
  	\right\}$$
	However, as in the proof of Lemma \ref{lem:local_positive_density}, we have 
	$$\#\{ \chi_v \in \Hom(k_v^*, H) : \ker\chi_v \cap \OO_v^{*} =\OO_v^{*Q}\} = (Q^\beta -1)|H|.$$
	Moreover, as the ramification is tame, the condition $\chi_v(k_v^*) = A$ implies that $
	\ker\chi_v \cap \OO_v^{*} =\OO_v^{*Q}$. Hence it suffices to observe that
	$$\#\{ \chi_v \in \Hom(k_v^*, H) : \chi_v(k_v^*) = A\} = (Q^2 -1)(Q^2 - Q),$$
	as this equals the number of surjective homomorphisms $k_v^* \to A$. This proves the lemma.
\end{proof}

We use this to show the following.

\begin{lemma}\label{lem:bicyclic_dominated_convergence_conditions}
  Let $H$ be a subgroup of $G$, let $x \in k^*\otimes \dual{H}$ and let
  \begin{equation*}
    F(R,x) := \prod_{v}\frac{|\widehat{f}_{R,H,v}(x_v;|G|/(|H|\alpha))|}{\zeta_{k_0,v}(1)^{\nu}}.
  \end{equation*}
  Let
$$  T(x) = S \cup \{v \text{ place of }k :\ x_v\notin \OO_v^*\otimes \dual{H}\}.$$
   Then the following statements hold.
  \begin{enumerate}
  \item Pointwise in $x$, we have $\lim_{R \to \Omega_k} F(R,x) = 0$.
  \item  There is $C>0$ such that $F(R,x) \ll \prod_{v\in T(x)}Cq_v^{-1-1/\alpha}$, uniformly in $R$ and $x$.
  \end{enumerate}
\end{lemma}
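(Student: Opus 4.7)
The plan is to evaluate the local Fourier transforms in $F(R,x)$ using either Lemma \ref{lem:local_bicyclic}, in the case $A \subseteq H$ where the restriction $\chi_v(k_v^*) \neq A$ is a non-trivial condition on $\Hom(k_v^*, H)$ (equivalently, $\beta_H = \beta$), or Lemma \ref{lem:local_positive_density} otherwise. In the second case $f_{R,v}$ restricts to the constant function $1$ on $\Hom(k_v^*, H)$, and $F(R,x)$ does not depend on $R$. After enlarging $S$ if necessary, I will split the product $F(R,x)$ into three pieces: places in $T(x) \setminus S$, places in $S$, and places outside $T(x)$.

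For part (2), at each $v \in T(x) \setminus S$ the case $x_v \notin \OO_v^* \otimes \dual{H}$ of Lemma \ref{lem:local_bicyclic} or Lemma \ref{lem:local_positive_density} gives $|\widehat{f}_{R,H,v}(x_v; |G|/(|H|\alpha))| \ll q_v^{-1-1/\alpha}$, and combined with the uniform bound $\zeta_{k_0,v}(1)^{-\nu} \ll 1$ this yields the required factor $\ll C q_v^{-1-1/\alpha}$. The finitely many places of $S \subseteq T(x)$ contribute bounded factors, which can be absorbed into $C$ after enlarging it. For the remaining product $\prod_{v \notin T(x)}$, I plan to mimic the analysis of Subsection \ref{subsec:positive_density_l_functions}---in particular Lemma \ref{lem:positive_density_euler_prod_analyt_cont} together with the bound \eqref{eq:positive_density_phi_bound}---and express this product, in absolute value, as a uniformly bounded holomorphic factor $\phi(x;\cdot)$ times the absolute value of a ratio of partial Dedekind zeta functions attached to the Kummer extension $k_x/k_0$, evaluated at $s = 1$. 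Since the conductor of $k_x/k_0$ is controlled by $T(x)$ via Lemma \ref{lem:positive_density_splitting}, the resulting bound will be uniform in $x$ and $R$.

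For part (1), I will factor $F(R,x) = F^{\mathrm{in}}_R(x) \cdot F^{\mathrm{out}}_R(x)$, where $F^{\mathrm{in}}_R$ collects the contribution of places in $R \setminus S$. Part (2) already shows that $F^{\mathrm{out}}_R(x)$ is uniformly bounded. When $A \subseteq H$, Lemma \ref{lem:local_bicyclic} furnishes the strict inequality $|c_v(x_v)| \leq Q^\beta - 1 - \delta_H$ with $\delta_H > 0$; combined with the expansion $\zeta_{k_0,v}(1)^{-\nu} = 1 - (Q^\beta - 1) q_v^{-1} + O(q_v^{-2})$ from \eqref{eq:constant_positive_zeta_bounds}, this bounds the local factor at each $v \in R \setminus (T(x) \cup S)$ with $q_v \equiv 1 \bmod Q$ by $1 - \tfrac{\delta_H}{2} q_v^{-1}$ for $q_v$ sufficiently large, while places with $q_v \not\equiv 1 \bmod Q$ contribute factors $1 + O(q_v^{-1-1/\alpha})$. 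Since $\sum_{q_v \equiv 1 \bmod Q} q_v^{-1}$ diverges by Chebotarev density applied to $k(\mu_Q)/k$, I conclude that $F^{\mathrm{in}}_R(x) \to 0$ as $R \to \Omega_k$. When $A \not\subseteq H$, $F(R,x)$ is independent of $R$ and equals the infinite product whose factor at $v$ with $q_v \equiv 1 \bmod Q$ has the shape $1 + (Q^{\beta_H} - Q^\beta) q_v^{-1} + O(q_v^{-1-1/\alpha})$; since $Q^{\beta_H} - Q^\beta < 0$, the same divergence of $\sum_{q_v \equiv 1 \bmod Q} q_v^{-1}$ forces the product to vanish.

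The main obstacle will be the uniformity in $x$ of the bound in part (2) for the product over $v \notin T(x)$: the individual factors are only $1 + O(q_v^{-1})$ with potentially signed coefficients, so absolute convergence fails, and uniformity must be extracted from the $L$-function analysis of Subsection \ref{subsec:positive_density_l_functions}, including explicit control of the conductor of the Kummer extension $k_x/k_0$ in terms of $T(x)$.
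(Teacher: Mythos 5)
Your plan contains two misjudgments, one minor and one that concerns the heart of the lemma.

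First, a small correction: the equivalence ``$A \subseteq H$ if and only if $\beta_H = \beta$'' is false when $\beta = \beta_G \geq 3$. If $\beta_H = \beta$ then $H$ contains all $Q$-torsion of $G$ (a subgroup isomorphic to $(\ZZ/Q\ZZ)^{\beta}$ already contains $Q^\beta - 1$ elements of order $Q$, which is all of them), so $A \subseteq H$. But the converse fails: one can have $A \subseteq H$ yet $\beta_H < \beta$ (e.g.\ $G = (\ZZ/Q\ZZ)^3$ and $H = A$). For such $H$, Lemma~\ref{lem:local_bicyclic} does not apply, and $F(R,x)$ is not independent of $R$. Your case $A \not\subseteq H$ is genuinely contained in $\beta_H < \beta$ (the contrapositive of what I just said), but the dichotomy you use does not cleanly split the subgroups. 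The paper's dichotomy is simply $\beta_H = \beta$ versus $\beta_H < \beta$: in the latter case Lemma~\ref{lem:positive_density_euler_prod_analyt_cont} has exponent $(Q^{\beta_H - m} - 1)/d < \nu$, which forces the Euler product defining $F(R,x)$ to diverge to zero identically for all $R, x$, so both parts (1) and (2) become trivial and one only needs to treat $\beta_H = \beta$.

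The more substantive issue is your claimed ``main obstacle'' for part (2). You worry that the product over $v \notin T(x)$ has local factors $1 + O(q_v^{-1})$ with signed coefficients, so absolute convergence fails and one must invoke the $L$-function analysis of Subsection~\ref{subsec:positive_density_l_functions} with explicit conductor control. This is a misdiagnosis: the division by $\zeta_{k_0,v}(1)^{\nu}$ (with $\nu = (Q^\beta - 1)/d$ defined using $\beta$ for the big group $G$, not $\beta_H$) is engineered precisely to kill or flip the sign of that problematic $q_v^{-1}$-term. Concretely, for $v \notin T(x)$ with $q_v \equiv 1 \bmod Q$, Lemma~\ref{lem:local_positive_density} (or Lemma~\ref{lem:local_bicyclic}) gives $|\widehat{f}_{R,H,v}(x_v; \cdot)| \leq 1 + (Q^{\beta_H} - 1)q_v^{-1} + O(q_v^{-1-1/\alpha})$, while \eqref{eq:constant_positive_zeta_bounds} gives $\zeta_{k_0,v}(1)^{-\nu} = 1 - (Q^{\beta} - 1)q_v^{-1} + O(q_v^{-2})$; multiplying, each local factor of $F$ is $\leq 1 + (Q^{\beta_H} - Q^{\beta})q_v^{-1} + O(q_v^{-1-1/\alpha}) \leq 1 + O(q_v^{-1-1/\alpha})$, uniformly in $R$ and $x$, since $\beta_H \leq \beta$. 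So the product over $v \notin T(x)$ is absolutely convergent and $\ll 1$ without any $L$-function machinery. The $L$-function and conductor analysis is needed earlier in Section~\ref{sec:conditions} to obtain the meromorphic continuation and polynomial growth on vertical lines; for the present lemma, which only evaluates at $s = 1/\alpha$ (i.e.\ $\alpha s = 1$ where $\zeta_{k_0,v}(\alpha s) = \zeta_{k_0,v}(1)$), the direct Euler-factor comparison suffices. Your proposed detour through Lemma~\ref{lem:positive_density_euler_prod_analyt_cont} and the convexity bound \eqref{eq:positive_density_phi_bound} would give a weaker estimate $\prod_{v\in T(x)} Cq_v^{-1-1/\alpha+\delta}$ and would require more care with the cases $m > 0$ and $\beta_H < \beta$; it is not wrong, but it works against yourself by ignoring the cancellation that the lemma is built around.
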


\begin{proof}
  If $\beta_H\neq \beta$, then Lemma  \ref{lem:positive_density_euler_prod_analyt_cont} shows that  $F(R,x)=0$ for all $R,x$. So let us assume that $\beta_H = \beta$. We fix $R,x$ and define, for $v\in\Omega_k$,
  \begin{equation*}
    F_v := \frac{|\widehat{f}_{R,H,v}(x_v;|G|/(|H|\alpha))|}{\zeta_{k_0,v}(1)^{\nu}}.
  \end{equation*}

Then
\begin{equation*}
  F(R,x) = \prod_{v \in T(x)}F_v 
  \prod_{\substack{v\in\Omega_k\smallsetminus T(x)\\q_v\not\equiv 1\bmod Q}} F_v 
  \prod_{\substack{v\in R\smallsetminus T(x)\\q_v\equiv 1\bmod Q}}F_v 
  \prod_{\substack{v\in\Omega_k\smallsetminus(R\cup T(x))\\q_v\equiv 1\bmod Q}}F_v.
\end{equation*}
We now bound these four products explicitly in $R,x$, by comparing the bounds for $\widehat{f}_{R,H,v}$ from Lemma \ref{lem:local_positive_density} and Lemma \ref{lem:local_bicyclic} with the bounds for $\zeta_{k_0,v}(1)^{-\nu}$ from \eqref{eq:constant_positive_zeta_bounds}. We obtain
\begin{align}
  \prod_{\mathclap{v \in T(x)}}F_v &\ll \prod_{v\in T(x)}Cq_v^{-1-1/\alpha},\label{eq:bicyclic_1}\\
  \prod_{\mathclap{\substack{v\in R\smallsetminus T(x)\\q_v\equiv 1\bmod Q}}}F_v &
  \ll \prod_{\substack{v\in R\smallsetminus T(x)\\q_v\equiv 1\bmod Q}}\left(1 - \delta_H q_v^{-1} + O(q_v^{-1-1/\alpha})\right),\label{eq:bicyclic_2}\\
  \prod_{\mathclap{\substack{v\in\Omega_k\smallsetminus T(x)\\q_v\not\equiv 1\bmod Q}}} F_v &\ll 1,
  \qquad  \qquad
  \prod_{\mathclap{\substack{v\in\Omega_k\smallsetminus(R\cup T(x))\\q_v\equiv 1\bmod Q}}}F_v \ll 1,\nonumber  
\end{align}
where $\delta_H$ is as in Lemma \ref{lem:local_bicyclic}. Assertion (1) in the lemma follows from \eqref{eq:bicyclic_2} and Chebotarev's density theorem. 
Assertion (2) follows from \eqref{eq:bicyclic_1} and the fact that all other products are $\ll 1$.
\end{proof}

Now \eqref{eqn:lim}, and thus Theorem \ref{thm:bicyclic}, follows immediately from Lemma \ref{lem:k*_sum}, Lemma \ref{lem:bicyclic_dominated_convergence_conditions} and Lebesgue's dominated convergence theorem. 
\qed

\subsection{Proof of Theorem \ref{thm:cyclic}}

\noindent We require the following proposition.

\begin{proposition} \label{prop:solvable}
	Let $k$ be a number field and $G$ a finite solvable group. Then there exists a $G$-extension
	$(K/k,\psi)$ all of whose decomposition groups are cyclic.
\end{proposition}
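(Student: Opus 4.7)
The plan is to construct the required $G$-extension by prescribing local data at a finite set of places and appealing to a strong form of the solvable inverse Galois theorem. First, I would fix a generating set $g_1,\ldots,g_r$ of $G$. For each $i$, I would pick a distinct finite place $v_i$ of $k$, coprime to $|G|$, satisfying $q_{v_i}\equiv 1\pmod{\ord(g_i)}$; this congruence condition guarantees that $k_{v_i}$ admits a totally tamely ramified cyclic extension of degree $\ord(g_i)$, giving a local homomorphism $\rho_{v_i}\colon\Gal(\bar{k}_{v_i}/k_{v_i})\to G$ with cyclic image $\langle g_i\rangle$.

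The second step is to apply a refined form of Shafarevich's theorem for the solvable inverse Galois problem, viewed as the solution of an embedding problem over a number field with prescribed local behavior at finitely many places (as developed by Neukirch and others; see e.g.~\cite[Ch.~IX]{NSW00}). This should produce a surjective continuous homomorphism $\varphi\colon\Gal(\bar{k}/k)\to G$ that is unramified outside $\{v_1,\ldots,v_r\}$ and agrees with the prescribed $\rho_{v_i}$ locally at each $v_i$. Since the subgroups $\langle g_i\rangle$ jointly generate $G$, the image of $\varphi$ is forced to be all of $G$, so this is a genuine $G$-extension.

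Once such a $\varphi$ is in hand, the cyclicity of all decomposition groups of $K_\varphi/k$ follows at once: at each $v_i$ the decomposition group equals $\langle g_i\rangle$ by construction, whereas at every place $v\notin\{v_1,\ldots,v_r\}$ the extension $\varphi$ is unramified, so the decomposition group is topologically generated by a Frobenius element and therefore cyclic.

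The main obstacle, and the delicate part of the argument, is securing the existence of the global lift $\varphi$ with both the prescribed local behavior \emph{and} the prescribed (minimal) ramification locus. This requires a version of Shafarevich's theorem strong enough to simultaneously control the decomposition groups at finitely many places and the set of ramified places, together with a suitable choice of the auxiliary places $v_i$ for which the compatibility conditions of the corresponding embedding problem, in particular the vanishing of the relevant $\mathrm{H}^2$-obstructions, can be arranged. One would likely proceed by induction on the derived length of $G$, reducing to the abelian case (which can be handled directly using class field theory, choosing one ramified prime for each cyclic factor) and lifting step by step through the solvable filtration while preserving cyclic decomposition groups.
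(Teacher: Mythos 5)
Your plan hinges on a step that is not available: for a non-abelian solvable group $G$, one cannot in general solve the inverse Galois problem with a \emph{prescribed} ramification locus. Shafarevich's construction proceeds by solving a chain of split embedding problems with elementary abelian kernels, and at each stage one is forced to introduce new auxiliary ramified primes (``Scholz primes''); there is no way to confine ramification to your initial list $\{v_1,\ldots,v_r\}$. Your argument that decomposition groups away from $\{v_1,\ldots,v_r\}$ are cyclic rests entirely on the assumption that those places are unramified, so this gap is fatal as written: at the extra Shafarevich primes the decomposition groups are a priori only metacyclic (being tame), not cyclic. Your outlined induction on derived length would also fail at this point, because ``solve the next embedding problem without adding ramification'' is exactly the constraint that cannot in general be met.

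The correct statement, and the one the paper actually appeals to, is weaker and different in emphasis: Shafarevich's construction can be carried out so that \emph{at every ramified place} the local extension is cyclic (with arbitrary new ramified primes permitted), and therefore all decomposition groups are cyclic. This is precisely the content explained in the proof of \cite[Thm.~2]{Son08}, which the paper cites. So the fix is not to fight for a minimal ramification locus, but to give up on controlling the \emph{set} of ramified places and instead control the \emph{local extension} at each of them; once you make that change your proposal collapses to a citation of Sonn's theorem, which is what the paper does. The parts of your argument that are sound — cyclicity at unramified places via Frobenius, cyclicity at the chosen $v_i$ by the $q_{v_i}\equiv 1\pmod{\ord(g_i)}$ condition, triviality/order-2 at archimedean places — are all correct, but they do not substitute for an argument controlling the extra ramified primes.
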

\begin{proof}
	As explained in the proof of \cite[Thm.~2]{Son08}, Shafarevich constructed such an extension
	in his resolution of the inverse Galois problem for solvable groups \cite{NSW00}.
\end{proof}

To prove Theorem \ref{thm:cyclic}, first suppose that the $Q$-Sylow subgroup of $G$ is cyclic. For each place $v$,
let $\Lambda_v$ be the set of all cyclic sub-$G$-extensions of $k_v$. We claim that $\Lambda_v$
satisfies the condition \eqref{eqn:conditions} of Theorem \ref{thm:conditions} for all $v$.
Indeed, let $K_v/k_v$ be a sub-$G$-extension whose inertia group $I_v$ injects into $\ZZ/Q\ZZ$. If $I_v = 0$ then $G_v$ is clearly cyclic. Otherwise $G_v$ is an extension of a cyclic group by $\ZZ/Q\ZZ$. Our assumptions on $G$ now imply that $G_v$ is also cyclic in this case,
which proves the claim. Applying Theorem \ref{thm:conditions} and using Proposition \ref{prop:solvable}, we
obtain the result.

Now suppose that the $Q$-Sylow subgroup of $G$ is not cyclic, 
so that $G$ contains a subgroup $A$ which is isomorphic to $(\ZZ/Q\ZZ)^2$.
Let $(K/k, \psi)$ be a $G$-extension such that $D_v$ is cyclic for all $v \notin S$. Clearly 
$\psi(D_v) \neq A$ for all $v \notin S$. However, such extensions have density $0$
in all $G$-extensions by Theorem~\ref{thm:bicyclic}, which proves the result in this case.
\qed

\section{The Hasse principle and weak approximation}\label{sec:HPandWA}
We now use our analytic results to prove  the remaining results on the Hasse norm principle and weak approximation,
as stated in the introduction.

\subsection{Norm one tori} \label{sec:tori}
We begin by recalling some results on norm one tori.
Let $K/k$ be a finite separable extension of fields. Consider the associated norm one torus $\Res_{K/k}^1 \Gm$,
given as the kernel of the norm map
$$\Res_{K/k}^1 \Gm := \ker( \Res_{K/k} \Gm \to \Gm),$$
where $\Res_{K/k} \Gm$ denotes the Weil restriction of $\Gm$ from $K$ to $k$.
If $k$ is a number field, we denote by $\Sha(\Res_{K/k}^1 \Gm)$
the Tate-Shafarevich group of $\Res_{K/k}^1 \Gm$ over $k$ (see \eqref{def:Sha}). Our interest in this stems from the following
well-known relation to the Hasse norm principle 
$$\Sha(\Res_{K/k}^1 \Gm) \cong \left(k^* \cap N_{K/k}(\Adele^*_K)\right)/ N_{K/k}(K^*)$$
(see e.g. \cite[p.~307]{PR94}).
We now recall how to calculate this group.
\begin{theorem}[Tate] \label{thm:Tate}
	Let $G$ be a finite group and let $(K/k, \psi)$ be a $G$-extension of number fields.
	Then there is a canonical isomorphism
	$$\QZdual{\Sha(\Res_{K/k}^1 \Gm)} \cong \ker\left( \mathrm{H}^3(G,\ZZ) \to \prod_{v} \mathrm{H}^3(\psi(D_v),\ZZ) \right).$$
\end{theorem}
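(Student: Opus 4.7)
The plan is to derive this from Tate-Poitou duality applied to the character module of $T := \Res^1_{K/k}\Gm$. Dualising the defining exact sequence
\begin{equation*}
1 \to T \to \Res_{K/k}\Gm \to \Gm \to 1
\end{equation*}
yields a short exact sequence of $G$-modules
\begin{equation*}
0 \to \ZZ \to \ZZ[G] \to \hat T \to 0,
\end{equation*}
where $\ZZ \hookrightarrow \ZZ[G]$ sends $1$ to the norm element $N_G = \sum_{g\in G}g$, so that $\hat T \cong \ZZ[G]/\ZZ\cdot N_G$. By the Tate-Poitou duality theorem for tori over number fields one has
\begin{equation*}
\QZdual{\Sha(T)} \cong \Sha^2(\hat T) := \ker\Bigl(\mathrm{H}^2(k,\hat T) \to \prod_v \mathrm{H}^2(k_v,\hat T)\Bigr),
\end{equation*}
so it suffices to identify $\Sha^2(\hat T)$ with the claimed kernel.

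Next I would reduce everything to $G$-cohomology. Since $\ZZ[G]$ is induced from the trivial subgroup, Shapiro's lemma gives $\mathrm{H}^i(k,\ZZ[G]) \cong \mathrm{H}^i(K,\ZZ)$, and similarly $\mathrm{H}^i(k_v,\ZZ[G])$ decomposes over places $w \mid v$. Moreover, $\ZZ[G]$ is cohomologically trivial as a $G$-module and as a $D_v$-module (being $D_v$-induced on each orbit), so the exact sequence above produces a canonical dimension-shift isomorphism $\mathrm{H}^i(G,\hat T) \cong \mathrm{H}^{i+1}(G,\ZZ)$ for $i\ge 1$, and similarly for $D_v$. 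Combining this with the Hochschild-Serre inflation map $\mathrm{H}^2(G,\hat T) \to \mathrm{H}^2(k,\hat T)$, we get natural comparison maps $\mathrm{H}^3(G,\ZZ) \to \mathrm{H}^2(k,\hat T)$ and $\mathrm{H}^3(D_v,\ZZ) \to \mathrm{H}^2(k_v,\hat T)$ compatible with restriction to places.

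The heart of the argument is a diagram chase showing that these inflation maps induce an isomorphism on $\Sha$ groups. Using $\mathrm{H}^1(K,\ZZ) = 0$ (as $\ZZ$ is torsion-free and the absolute Galois group is profinite) together with the long exact sequence of the above short exact sequence over $k$, one sees that the cokernel of $\mathrm{H}^2(K,\ZZ) \to \mathrm{H}^2(k,\hat T)$ injects into $\mathrm{H}^3(k,\ZZ)$. Using the vanishing $\mathrm{H}^3(K_w,\ZZ) = 0$ for non-archimedean $w$ (Tate local duality) plus the parallel local exact sequences, one checks that a class in $\mathrm{H}^2(k,\hat T)$ vanishing locally everywhere comes from a unique class in $\mathrm{H}^3(G,\ZZ)$ whose image in $\mathrm{H}^3(D_v,\ZZ)$ vanishes for every $v$. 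Finally, the tautological identification $D_v = \psi(D_v)$ via the chosen isomorphism $\psi$ rewrites the target as $\prod_v \mathrm{H}^3(\psi(D_v),\ZZ)$, giving the stated isomorphism.

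The main obstacle I expect is making all the identifications of connecting and inflation maps strictly compatible at every place simultaneously, and in particular handling the archimedean places where $\mathrm{H}^3(K_w,\ZZ)$ can carry a $\ZZ/2$ contribution from complex conjugation; one has to verify that the resulting correction lives in the appropriate kernel and does not spoil the global-to-local compatibility.
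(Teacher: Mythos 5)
The paper does not prove this theorem itself: its ``proof'' is a single-line citation to Sansuc \cite[Ex.~5.6]{San81}. Your route through Poitou--Tate duality, the resolution $0\to\ZZ\to\ZZ[G]\to\widehat T\to 0$, dimension shifting via the cohomological triviality of $\ZZ[G]$, and inflation--restriction is precisely the standard derivation that the cited reference carries out, so the approach is sound and matches the source the paper leans on.

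Two remarks to tighten the sketch. First, the cleanest way to run the global--to--local comparison is to show directly that $\Sha^2(k,\widehat T)\subseteq\operatorname{im}\bigl(\inf\colon\mathrm{H}^2(G,\widehat T)\hookrightarrow\mathrm{H}^2(k,\widehat T)\bigr)$: a class of $\Sha^2$ restricts to $0$ in $\mathrm{H}^2(K,\widehat T)\cong\mathrm{H}^2(K,\ZZ)^{|G|-1}$ because $\mathrm{H}^2(K,\ZZ)\cong\mathrm{H}^1(K,\QQ/\ZZ)$ injects into $\prod_w\mathrm{H}^1(K_w,\QQ/\ZZ)$ by Chebotarev, and then $\mathrm{H}^1(K,\widehat T)=0$ forces the class to be inflated. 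The analogous local injectivity $\mathrm{H}^2(D_v,\widehat T)\hookrightarrow\mathrm{H}^2(k_v,\widehat T)$ (from $\mathrm{H}^1(K_w,\widehat T)=0$) then identifies $\Sha^2(k,\widehat T)$ with $\ker\bigl(\mathrm{H}^2(G,\widehat T)\to\prod_v\mathrm{H}^2(D_v,\widehat T)\bigr)$, and dimension shifting transports this to $\mathrm{H}^3(\cdot,\ZZ)$. Second, your worry about a $\ZZ/2$ obstruction at real places is unfounded at this degree: $\mathrm{H}^3(\ZZ/2,\ZZ)=0$, so $\mathrm{H}^3(K_w,\ZZ)$ vanishes even when $K_w=\RR$; the archimedean places do not contribute any correction term here.
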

\begin{proof}
	See e.g.~\cite[Ex.~5.6]{San81}.
\end{proof}
Here $D_v$ denotes the decomposition group of $K/k$ at $v$.
Note that if $H$ is a finite cyclic group, then $\mathrm{H}^3(H,\ZZ) = \mathrm{H}^1(H,\ZZ) = 0$. In particular,
it is only the ramified primes which are relevant in Theorem \ref{thm:Tate}.

Given a torus $T$ over a number field $k$, we write
$$A(T) = \left(\prod_vT(k_v)\right) /\overline{T(k)},$$
where $\overline{T(k)}$ denotes the closure of $T(k)$ in $\prod_vT(k_v)$
with respect to the product topology.
This group measures the failure of weak approximation for $T$. The next result encapsulates
the fact that the Brauer-Manin obstruction is the only obstruction to the Hasse principle
and weak approximation for the norm one tori $\Res_{K/k}^1 \Gm$, which is a special case
of a general result of Voskresenski\v{\i} \cite[Thm.~6]{Vos70}.

\begin{theorem}[Voskresenski\v{\i}] \label{thm:Sansuc}
	Let $G$ be a finite group and let $(K/k, \psi)$ be a $G$-extension of number fields.
	Then there is an exact sequence
	$$0 \to A(\Res_{K/k}^1 \Gm) \to \QZdual{\mathrm{H}^3(G, \ZZ)} \to \Sha(\Res_{K/k}^1 \Gm) \to 0.$$
	In particular, if $\mathrm{H}^3(G, \ZZ) \neq 0$ and $\Sha(\Res_{K/k}^1 \Gm) = 0$, then $\Res_{K/k}^1 \Gm$
	fails weak approximation.
\end{theorem}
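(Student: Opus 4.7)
The plan is to deduce Voskresenski\v{\i}'s theorem from two inputs: a general duality exact sequence for $k$-tori, combined with a cohomological computation specific to norm one tori.

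The first ingredient is the Voskresenski\v{\i}--Sansuc exact sequence, which for any $k$-torus $T$ with smooth projective compactification $X$ reads
\begin{equation*}
0 \to A(T) \to \QZdual{\mathrm{H}^1(k, \Pic \bar{X})} \to \Sha(T) \to 0.
\end{equation*}
This encapsulates the statement that the Brauer--Manin obstruction is the only obstruction to the Hasse principle and weak approximation on $X$: via Hochschild--Serre the middle term is identified with the algebraic Brauer group quotient $\Br_1(X)/\Br(k)$, and Poitou--Tate duality converts the kernel and cokernel of the associated local-to-global map on Brauer elements into $\Sha(T)$ and $A(T)$ respectively.

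The second ingredient is the computation of the middle term for $T = \Res_{K/k}^1 \Gm$. Since $X$ can be chosen to split over $K$, the $\Gal(\bar{k}/k)$-action on $\Pic \bar{X}$ factors through $G$, so $\mathrm{H}^1(k, \Pic \bar{X}) = \mathrm{H}^1(G, \Pic \bar{X})$. Take $X$ to be a smooth $G$-equivariant toric compactification of $T$ whose boundary is a $G$-stable union of smooth divisors. The boundary divisor-class sequence
\begin{equation*}
0 \to \hat{T} \to P \to \Pic \bar{X} \to 0,
\end{equation*}
with $P$ the permutation $G$-module generated by the boundary components, combined with Shapiro's lemma (which forces $\mathrm{H}^i(G, P) = 0$ for $i \geq 1$), yields $\mathrm{H}^1(G, \Pic \bar{X}) \cong \mathrm{H}^2(G, \hat{T})$. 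Dualising the defining sequence of the norm one torus produces $0 \to \ZZ \to \ZZ[G] \to \hat{T} \to 0$, and a further application of Shapiro to $\ZZ[G]$ yields $\mathrm{H}^2(G, \hat{T}) \cong \mathrm{H}^3(G, \ZZ)$.

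Combining these steps produces the stated exact sequence; the final assertion then follows at once, since under the hypotheses it forces $A(T) \cong \QZdual{\mathrm{H}^3(G,\ZZ)} \neq 0$. The principal obstacle is the first ingredient, whose full proof demands the construction of a smooth compactification, control of its Brauer group, and a careful invocation of Poitou--Tate duality. The second step is essentially formal once one recognises the role of the canonical permutation resolution of $\hat{T}$, and the outcome is reassuringly compatible with the explicit description of $\Sha$ supplied by Theorem~\ref{thm:Tate}.
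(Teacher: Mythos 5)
The paper's own proof is a direct citation to \cite[Thm.~6]{Vos70} and \cite[Prop.~7]{CTS77}, and your decomposition into a general Voskresenski\v{\i}--Sansuc exact sequence plus a cohomological identification of the middle term is exactly the route those references take. However, there is a genuine error in your computation of the middle term.

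You assert that Shapiro's lemma forces $\mathrm{H}^i(G,P)=0$ for all $i\geq 1$ when $P$ is a permutation module. This is false. Shapiro's lemma gives $\mathrm{H}^i(G,\ZZ[G/H])\cong \mathrm{H}^i(H,\ZZ)$, which vanishes for $i=1$ (as $\Hom(H,\ZZ)=0$) but is in general non-zero for $i\geq 2$; for instance $\mathrm{H}^2(H,\ZZ)\cong\Hom(H,\QQ/\ZZ)$. Only when $P$ is a \emph{free} $\ZZ[G]$-module does all higher cohomology vanish, and the boundary-divisor module of a smooth $G$-equivariant toric compactification has no reason to be free. Consequently the long exact sequence of $0\to\widehat{T}\to P\to\Pic\bar{X}\to 0$ only yields an injection $\mathrm{H}^1(G,\Pic\bar{X})\hookrightarrow \mathrm{H}^2(G,\widehat{T})$, not the isomorphism you claim; indeed, for a general torus that injection is strict.

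The correct version of this step, which is the content of the flasque-resolution theory developed by Colliot-Th\'el\`ene and Sansuc, identifies $\mathrm{H}^1(G,\Pic\bar{X})$ not with $\mathrm{H}^2(G,\widehat{T})$ but with
$$\ker\Big(\mathrm{H}^2(G,\widehat{T})\longrightarrow \prod_{C\leq G\ \text{cyclic}}\mathrm{H}^2(C,\widehat{T})\Big),$$
using that permutation modules have trivial $\mathrm{H}^1$ and that restriction to cyclic subgroups is injective on $\mathrm{H}^2$ of a permutation module. From this your second step does finish the job, and for the right reason: Shapiro applies legitimately to $0\to\ZZ\to\ZZ[G]\to\widehat{T}\to 0$ because $\ZZ[G]$ really is free, so the displayed kernel becomes $\ker\big(\mathrm{H}^3(G,\ZZ)\to\prod_C\mathrm{H}^3(C,\ZZ)\big)$, and this is all of $\mathrm{H}^3(G,\ZZ)$ precisely because $\mathrm{H}^3(C,\ZZ)=0$ for $C$ cyclic. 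So your end result is correct, but you arrive at it by assuming a vanishing of permutation-module cohomology that does not hold; the norm-one case only works out because the obstruction that your argument overlooks happens to vanish when the cyclic restrictions of $\mathrm{H}^3(G,\ZZ)$ are trivial.
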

\begin{proof}
	This follows from \cite[Thm.~6]{Vos70} and \cite[Prop.~7]{CTS77} (see also
	\cite[Ex.~5.6]{San81}).
\end{proof}



In our density results, we order norm one tori by the discriminant of the associated extension and count extensions of bounded discriminant. The following observation makes clear that we are only counting each such torus once this way.

\begin{proposition} \label{prop:tori}
Let $k$ be a number field and let $K_1/k$ and $K_2/k$ be Galois extensions. Then $\Res^1_{K_1/k}\Gm \cong \Res^1_{K_2/k}\Gm$ if and only if $K_1/k$ and $K_2/k$ are isomorphic extensions.
\end{proposition}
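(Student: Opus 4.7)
The ``if'' direction is trivial, so the content is the ``only if'' direction: the algebraic torus $\Res^1_{K/k}\Gm$ recovers the Galois extension $K/k$ up to isomorphism. The plan is to use the standard anti-equivalence between algebraic tori over $k$ and continuous $\Gamma_k$-lattices, where $\Gamma_k = \Gal(\bar{k}/k)$, and show that for Galois $K/k$ the splitting field of $\Res^1_{K/k}\Gm$ is $K$ itself.

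First I would compute the character (or cocharacter) lattice of $T:=\Res^1_{K/k}\Gm$ attached to a Galois extension $K/k$ with group $G = \Gal(K/k)$. From the exact sequence
\begin{equation*}
  1 \to \Res^1_{K/k}\Gm \to \Res_{K/k}\Gm \xrightarrow{N_{K/k}} \Gm \to 1
\end{equation*}
and the identification $X_*(\Res_{K/k}\Gm) = \ZZ[G]$, one obtains the short exact sequence of $\Gamma_k$-modules
\begin{equation*}
  0 \to X_*(T) \to \ZZ[G] \xrightarrow{\mathrm{aug}} \ZZ \to 0,
\end{equation*}
so $X_*(T) \cong I_G$, the augmentation ideal of $\ZZ[G]$, with the $\Gamma_k$-action factoring through the quotient $\Gamma_k \twoheadrightarrow G$ acting by left multiplication.

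The key step is to verify that this $G$-action on $I_G$ is \emph{faithful} when $G$ is non-trivial. For this, suppose $g \in G$ fixes every element of $I_G$. Applying the fixation condition to the elements $h-1$ for $h \in G$ and comparing coefficients in $\ZZ[G]$ forces $g=1$. Consequently, the kernel of the $\Gamma_k$-action on $X_*(T)$ is exactly $\Gal(\bar{k}/K)$, and the fixed field of this kernel is $K$; that is, $K$ is the splitting field of $T$ and is therefore determined by $T$ as a $k$-algebra.

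To conclude, suppose $\Res^1_{K_1/k}\Gm \cong \Res^1_{K_2/k}\Gm$ as $k$-tori. The anti-equivalence of categories implies the cocharacter modules $I_{G_1}$ and $I_{G_2}$ are isomorphic as $\Gamma_k$-lattices, so the kernels of the respective $\Gamma_k$-actions coincide as subgroups of $\Gamma_k$. Taking fixed fields yields $K_1 = K_2$ as subfields of $\bar{k}$ (the trivial case $G=1$ being immediate). The only potential obstacle is being careful that the anti-equivalence preserves the Galois-module structure and that our identification of $X_*(T)$ is compatible with the $\Gamma_k$-action on both sides; once this is set up correctly, the faithfulness argument finishes the proof.
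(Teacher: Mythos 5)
Your proof is correct, but it takes a genuinely different route from the paper's. The paper's argument is short and arithmetic: it invokes the fact (Neukirch, Cor.~13.19, ultimately Chebotarev) that a Galois extension of a number field is determined by the set of primes that split completely in it, and then observes that $\fp$ splits completely in $K$ if and only if $(\Res^1_{K/k}\Gm)_{k_\fp}$ is split. Your argument is instead lattice-theoretic: you compute that the cocharacter module of $\Res^1_{K/k}\Gm$ is the augmentation ideal $I_G$ with the left-translation $G$-action, verify that this action is faithful when $G\neq 1$ (which you do correctly by comparing coefficients of $gh-g$ and $h-1$ in $\ZZ[G]$), and deduce that the kernel of the $\Gamma_k$-action is exactly $\Gal(\bar k/K)$, so the splitting field of the torus recovers $K$. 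The trade-off: the paper's proof is two lines but leans on Chebotarev and is intrinsically about number fields; yours is slightly longer, purely algebraic (no analytic input), and as a bonus works verbatim over an arbitrary base field, since it uses only the anti-equivalence between tori and Galois lattices. One tiny point worth making explicit is that the two tori having the same dimension already forces $|G_1|=|G_2|$, which disposes of the degenerate case $G=1$; you flag this, and it is indeed immediate.
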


\begin{proof}
A Galois extension $K/k$ is determined up to $k$-isomorphism by the set of primes of $k$ 
which split completely in $K$ \cite[Cor.~13.19]{Neu99}. 
The proposition then follows from the fact that a prime $\fp$ of $k$ splits completely in $K$ if and only if the base-change of $\Res^1_{K/k}\Gm$ to $k_{\fp}$ is a split torus.
\end{proof}

\subsection{Proof of Theorem \ref{thm:solvable}}
Let $G$ be a finite solvable group. If $\mathrm{H}^3(G, \ZZ) = 0$, then $G$-extensions satisfy the Hasse norm principle
by Theorem \ref{thm:Tate}. On the other hand, if  $\mathrm{H}^3(G, \ZZ) \neq 0$ then let $K/k$ be as in
Proposition \ref{prop:solvable}. Then $\mathrm{H}^3(D_v, \ZZ) = 0$ for all $v$, hence the Hasse norm principle
fails by Theorem \ref{thm:Tate}. \qed

\subsection{Computations in group cohomology}

We now proceed with some cohomological computations. Lemma \ref{lem:exterior} and Lemma \ref{lem:exterior_calc}
are certainly well-known; we include proofs for completeness.

\begin{lemma} \label{lem:exterior}
	Let $G$ be a finite abelian group. Then there exists a canonical isomorphism
	$$\mathrm{H}^3(G,\ZZ) \cong \Hom(\exterior G,\QQ/\ZZ).$$
\end{lemma}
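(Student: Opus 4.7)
The plan is to establish the isomorphism as a composition of three canonical isomorphisms: a dimension shift, the universal coefficient theorem, and Schur's classical computation of the second integral homology of a finite abelian group.

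First, I would invoke the short exact sequence of trivial $G$-modules
\[
0 \to \ZZ \to \QQ \to \QQ/\ZZ \to 0.
\]
Since $G$ is finite and $\QQ$ is a uniquely divisible abelian group, we have $\mathrm{H}^i(G,\QQ) = 0$ for all $i\geq 1$. The long exact sequence in cohomology then yields a canonical (connecting homomorphism) isomorphism
\[
\delta : \mathrm{H}^2(G,\QQ/\ZZ) \xrightarrow{\ \sim\ } \mathrm{H}^3(G,\ZZ),
\]
which is functorial in $G$.

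Next, I would apply the universal coefficient theorem for group cohomology, giving a split short exact sequence
\[
0 \to \Ext^1_\ZZ(\mathrm{H}_1(G,\ZZ),\QQ/\ZZ) \to \mathrm{H}^2(G,\QQ/\ZZ) \to \Hom(\mathrm{H}_2(G,\ZZ),\QQ/\ZZ) \to 0.
\]
The $\Ext^1$ term vanishes because $\QQ/\ZZ$ is divisible, and hence an injective abelian group. This produces a canonical isomorphism
\[
\mathrm{H}^2(G,\QQ/\ZZ) \;\cong\; \Hom(\mathrm{H}_2(G,\ZZ),\QQ/\ZZ).
\]

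The third and most substantial step is the classical theorem of Schur: for a finite abelian group $G$, there is a canonical isomorphism $\mathrm{H}_2(G,\ZZ) \cong \exterior G$. The natural map goes from $\exterior G$ to $\mathrm{H}_2(G,\ZZ)$ via the Pontryagin product (equivalently, $a\wedge b$ maps to the homology class associated with the commutator in the universal central extension). Functoriality and the alternating property follow from general properties of the Pontryagin product. To verify bijectivity, I would reduce via the Künneth formula to the case of a product of cyclic groups, noting that both sides vanish on a single cyclic factor, and check the cross term directly on $\ZZ/n\ZZ \oplus \ZZ/m\ZZ$, where both sides are cyclic of order $(n,m)$.

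Composing the three canonical isomorphisms yields $\mathrm{H}^3(G,\ZZ)\cong \Hom(\exterior G,\QQ/\ZZ)$. The main obstacle is the third step: establishing the naturality of the Schur isomorphism in sufficient detail, since the first two steps are essentially formal. In practice, however, this is standard and can be cited from Brown's \emph{Cohomology of Groups}, so the proof can be presented compactly as a chain of references together with the dimension-shift argument.
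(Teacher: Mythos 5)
Your proof is correct but reorganizes the paper's argument into a genuinely different, though parallel, chain of isomorphisms. Both proofs use the same two substantive inputs: the identification $\mathrm{H}_2(G,\ZZ)\cong\exterior G$ (Schur), and the short exact sequence $0\to\ZZ\to\QQ\to\QQ/\ZZ\to 0$ exploited via the divisibility of $\QQ$ and the finiteness at hand. The difference is the order. The paper first applies the universal coefficient theorem with $\ZZ$-coefficients to get $\mathrm{H}^3(G,\ZZ)\cong\Ext(\mathrm{H}_2(G,\ZZ),\ZZ)$ (the $\Hom(\mathrm{H}_3,\ZZ)$ term dies because $\mathrm{H}_3(G,\ZZ)$ is finite), and then plays the sequence $0\to\ZZ\to\QQ\to\QQ/\ZZ\to 0$ \emph{off the coefficient side} to turn $\Ext(\exterior G,\ZZ)$ into $\Hom(\exterior G,\QQ/\ZZ)$. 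You instead play that same sequence \emph{off the group-cohomology side} as a dimension shift, $\mathrm{H}^3(G,\ZZ)\cong\mathrm{H}^2(G,\QQ/\ZZ)$, and then invoke UCT with $\QQ/\ZZ$-coefficients, where the $\Ext^1$ term dies because $\QQ/\ZZ$ is injective. The two routes are essentially dual to one another; yours has the cosmetic advantage that the intermediate object $\mathrm{H}^2(G,\QQ/\ZZ)$ has a concrete interpretation (the Schur multiplier's dual / central extensions by $\QQ/\ZZ$), while the paper's is slightly shorter. Your additional discussion of how to verify $\mathrm{H}_2(G,\ZZ)\cong\exterior G$ via the Pontryagin product and K\"unneth is correct and fills in a step the paper merely cites from Brown.
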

\begin{proof}
	The universal coefficient theorem yields an isomorphism
	$$\mathrm{H}^3(G,\ZZ) \cong \Ext(\mathrm{H}_2(G,\ZZ),\ZZ).$$
	Next, one has a canonical identification $\mathrm{H}_2(G,\ZZ) \cong \exterior G$ (see \cite[Thm.~V.6.4]{Bro82}), for example).
	Applying $\Hom(\exterior G, \cdot)$ to the exact sequence
	$$0 \to \ZZ \to \QQ \to \QQ/\ZZ \to 0,$$
	one obtains the exact sequence
	$$\Hom(\exterior G, \QQ) \to \Hom(\exterior G, \QQ/\ZZ) \to 
	\Ext(\exterior G,\ZZ) \to \Ext(\exterior G,\QQ).$$
	However, the first group is trivial as $\exterior G$ is finite,
	and the last group is trivial as $\QQ$ is divisible.
	This proves the result.
\end{proof}

The above result is very important for our work, as, together with Theorem \ref{thm:Tate},
it allows one to translate problems involving the Hasse norm principle
for abelian extensions into simple problems in linear algebra. We will use this duality between $\mathrm{H}^3(G,\ZZ)$ and $\exterior G$ 
implicitly throughout the rest of this paper.

\begin{lemma} \label{lem:exterior_calc}
	Let $n_{j+1} \mid n_j$ and  $$G = \ZZ/n_1\ZZ \oplus \cdots \oplus \ZZ/n_l\ZZ.$$
	Then
	$$\exterior{G} \cong (\ZZ/n_2\ZZ) \oplus \cdots \oplus  (\ZZ/n_j\ZZ)^{j-1} \oplus
	\cdots \oplus (\ZZ/n_{l}\ZZ)^{l-1}.$$
	In particular $\exterior G = 0$ if and only if $G$ is cyclic.
\end{lemma}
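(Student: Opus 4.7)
The plan is to apply the standard decomposition of the exterior square of a direct sum. Recall that for any abelian groups $A_1, \ldots, A_l$ one has a natural isomorphism
\begin{equation*}
  \exterior\Big(\bigoplus_{i=1}^l A_i\Big) \;\cong\; \bigoplus_{i=1}^l \exterior A_i \;\oplus\; \bigoplus_{i<j} A_i \otimes A_j,
\end{equation*}
which follows from the functorial behaviour of $\Lambda^2$ and the identity $\Lambda^2(A\oplus B)\cong \Lambda^2 A \oplus \Lambda^2 B \oplus (A\otimes B)$ applied inductively. I would apply this with $A_i=\ZZ/n_i\ZZ$.

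The two ingredients needed are: $\exterior{(\ZZ/n\ZZ)}=0$ (the exterior square of a cyclic group vanishes, since $a\wedge a =0$ and a single generator spans the group), and $\ZZ/m\ZZ \otimes \ZZ/n\ZZ \cong \ZZ/(m,n)\ZZ$ (standard). Under the divisibility assumption $n_{j+1}\mid n_j$, one has $(n_i,n_j)=n_j$ whenever $i<j$, so
\begin{equation*}
  \exterior G \;\cong\; \bigoplus_{i<j} \ZZ/n_j\ZZ \;=\; \bigoplus_{j=2}^{l} (\ZZ/n_j\ZZ)^{j-1},
\end{equation*}
which is exactly the claimed formula. No step looks genuinely difficult; the only thing to be careful about is citing (or briefly justifying) the direct-sum decomposition of $\Lambda^2$ and using the divisibility chain to simplify the gcd's.

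For the final assertion, observe that by convention all $n_j>1$, so $\ZZ/n_j\ZZ\neq 0$ for $j\geq 2$. Hence if $l\geq 2$ then $\exterior G$ contains the nonzero summand $\ZZ/n_2\ZZ$ and is therefore nontrivial, while if $l=1$ the sum above is empty and $\exterior G=0$. Thus $\exterior G=0$ if and only if $G$ is cyclic, as required.
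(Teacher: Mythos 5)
Your proof is correct and follows essentially the same route as the paper: decompose $\Lambda^2$ of a direct sum into exterior squares and pairwise tensor products, use $\Lambda^2(\ZZ/n\ZZ)=0$ and $\ZZ/m\ZZ\otimes\ZZ/n\ZZ\cong\ZZ/(m,n)\ZZ$, and simplify using the divisibility chain. The final observation about the cyclic case also matches the paper's reasoning.
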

\begin{proof}
	We use the following facts:
	\begin{enumerate}
		\item $\exterior (G_1 \oplus G_2) = \exterior G_1 \oplus (G_1 \otimes G_2) \oplus \exterior G_2$.
		\item $\exterior (\ZZ/n\ZZ) = 0$.
		\item $\ZZ/n\ZZ \otimes \ZZ/m\ZZ \cong \ZZ/\gcd(n,m)\ZZ$.
	\end{enumerate}
	By induction, we therefore obtain
	$$\exterior G \cong \bigoplus_{i < j} (\ZZ/n_i\ZZ \otimes \ZZ/n_j\ZZ) \cong \bigoplus_{i < j} \ZZ/n_j\ZZ,$$
	which yields the result.
\end{proof}

The next lemma is the reason for the dichotomy which occurs between Theorem \ref{thm:HNP_0}
and Theorem \ref{thm:fail_HNP}.

\begin{lemma} \label{lem:special_form}
	Let $G$ be a finite abelian group and let $p$ be a prime dividing $|G|$.
	Then the exponent of $\exterior G$ divides $p$ if and only if $G\cong
	\ZZ/n\ZZ \oplus (\ZZ/p\ZZ)^r$ for some $n$ with $p\mid n$ and some $r \geq 0$.
\end{lemma}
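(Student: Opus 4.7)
My plan is to deduce the lemma directly from Lemma \ref{lem:exterior_calc} by examining the invariant factor decomposition.

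Writing $G = \ZZ/n_1\ZZ \oplus \cdots \oplus \ZZ/n_l\ZZ$ with $n_{j+1} \mid n_j$ and each $n_j > 1$, Lemma \ref{lem:exterior_calc} gives
\[
\exterior G \cong \bigoplus_{j=2}^{l}(\ZZ/n_j\ZZ)^{j-1}.
\]
The key observation is that, since the invariant factors are ordered by divisibility, the exponent of $\exterior G$ is simply $n_2$ when $l \geq 2$, and $1$ when $l = 1$. So I will reduce the question to asking when $n_2$ divides $p$.

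For the forward direction, suppose the exponent of $\exterior G$ divides $p$. If $l = 1$ then $G$ is cyclic, and since $p \mid |G| = n_1$ we can take $n = n_1$ and $r = 0$. If $l \geq 2$, then $n_2 \mid p$ forces $n_2 = p$ (as $n_2 > 1$), and the chain of divisibilities $p = n_2 \geq n_3 \geq \cdots \geq n_l$ combined with $n_{j+1} \mid n_j$ forces $n_j = p$ for all $2 \leq j \leq l$. Thus $G \cong \ZZ/n_1\ZZ \oplus (\ZZ/p\ZZ)^{l-1}$, with $p = n_2 \mid n_1$, giving the required form with $n = n_1$ and $r = l-1$.

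For the converse, suppose $G \cong \ZZ/n\ZZ \oplus (\ZZ/p\ZZ)^r$ with $p \mid n$. If $r = 0$ then $\exterior G = 0$ by Lemma \ref{lem:exterior_calc} and there is nothing to check. If $r \geq 1$, the hypothesis $p \mid n$ guarantees that this is already the invariant factor decomposition (with $n_1 = n$ and $n_2 = \cdots = n_{r+1} = p$), so Lemma \ref{lem:exterior_calc} gives $\exterior G \cong \bigoplus_{j=2}^{r+1}(\ZZ/p\ZZ)^{j-1}$, which has exponent $p$.

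There is no real obstacle here; the only subtle point is to make sure that $p \mid n$ in the assumed form really does correspond to the invariant factor structure (rather than just a direct sum presentation), so that Lemma \ref{lem:exterior_calc} applies directly. This is immediate because $p \mid n$ gives the required divisibility $n_2 = p \mid n_1 = n$.
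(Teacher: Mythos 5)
Your proof is correct and is exactly the argument the paper is gesturing at when it writes ``This follows easily from Lemma~\ref{lem:exterior_calc}'': you read off from the invariant-factor form $\exterior G \cong \bigoplus_{j=2}^l(\ZZ/n_j\ZZ)^{j-1}$ that the exponent is $n_2$ (or $1$ if $l=1$), and translate $n_2 \mid p$ into the stated shape of $G$. The one care point you flag — checking that $\ZZ/n\ZZ\oplus(\ZZ/p\ZZ)^r$ with $p\mid n$ really is the invariant-factor decomposition so that Lemma~\ref{lem:exterior_calc} applies — is handled correctly.
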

\begin{proof}
	This follows easily from Lemma \ref{lem:exterior_calc}.
\end{proof}

Whilst the question of whether an abelian extension satisfies the Hasse norm principle is quite
complicated in general, it turns out that for weak approximation there is a simple local criterion.

\begin{lemma} \label{lem:WA}
	Let $K/k$ be an abelian extension of number fields. Then $\Res_{K/k}^1 \Gm$
	satisfies weak approximation if and only if the decomposition group of $K/k$ at each place of $k$ is cyclic.
\end{lemma}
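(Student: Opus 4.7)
The plan is to reduce weak approximation to a purely cohomological condition on the decomposition groups, using Theorems~\ref{thm:Sansuc} and~\ref{thm:Tate}, and then translate the condition into the language of exterior squares via Lemma~\ref{lem:exterior}.

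First, I Pontryagin-dualise the exact sequence of Theorem~\ref{thm:Sansuc}. Since Pontryagin duality is exact on finite abelian groups, and Theorem~\ref{thm:Tate} identifies $\Sha(\Res^1_{K/k}\Gm)^{\sim}$ with the kernel of the restriction map $\mathrm{res}:H^3(G,\ZZ)\to\prod_v H^3(D_v,\ZZ)$, the dualised sequence reads
\begin{equation*}
0 \to \ker(\mathrm{res}) \to H^3(G,\ZZ) \to A(\Res^1_{K/k}\Gm)^{\sim} \to 0,
\end{equation*}
so $A(\Res^1_{K/k}\Gm)^{\sim}$ is the image of $\mathrm{res}$. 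Hence $\Res^1_{K/k}\Gm$ satisfies weak approximation if and only if $\mathrm{res}$ is the zero map, equivalently if and only if the local restriction $\mathrm{res}_v:H^3(G,\ZZ)\to H^3(D_v,\ZZ)$ vanishes for every place $v$ of $k$.

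Next, invoke Lemma~\ref{lem:exterior} to identify $H^3(H,\ZZ)\cong\QZdual{\exterior H}$ for each abelian group $H$ (each $D_v$ is abelian because $G$ is). This identification is functorial with respect to subgroup inclusions, so $\mathrm{res}_v$ becomes the Pontryagin dual of the natural homomorphism $\iota_{v,*}:\exterior D_v\to\exterior G$ induced by the inclusion $D_v\hookrightarrow G$. Thus weak approximation holds if and only if each $\iota_{v,*}$ is the zero map.

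Finally, I conclude via Lemma~\ref{lem:exterior_calc}, which asserts that $\exterior H=0$ precisely when $H$ is cyclic. The ``if'' direction is then immediate: cyclic $D_v$ forces $\exterior D_v=0$, so $\iota_{v,*}$ vanishes trivially. The main obstacle is the ``only if'' direction: one must show that if $D_v$ is non-cyclic then $\iota_{v,*}$ has non-trivial image in $\exterior G$. I would handle this by choosing, via the structure theorem, two generators $d_1,d_2$ of a non-cyclic factor of $D_v$ whose wedge $d_1\wedge d_2$ is a non-zero element of $\exterior D_v$, and then checking that under the explicit presentation of $\exterior G$ from Lemma~\ref{lem:exterior_calc} this wedge survives in $\exterior G$—an analysis that turns on the orders of the images of $d_1,d_2$ in the cyclic summands of $G$.
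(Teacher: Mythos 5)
Your reduction via Theorems~\ref{thm:Sansuc} and~\ref{thm:Tate} to the vanishing of each local map $\iota_{v,*}\colon\exterior D_v\to\exterior G$ is correct and is exactly the paper's route. The gap is in the ``only if'' direction, which you leave as a sketch, and the sketch as written cannot be carried out. You propose to pick $d_1,d_2$ generating a non-cyclic factor of $D_v$ with $d_1\wedge d_2\neq 0$ in $\exterior D_v$ and then ``check that this wedge survives in $\exterior G$'' by tracking the orders of the images; but for a bare subgroup inclusion $H\subseteq G$ of finite abelian groups this is simply not true. Take $G=\ZZ/4\ZZ\oplus\ZZ/2\ZZ$ with generators $e_1,e_2$ of orders $4,2$, and let $H=\langle 2e_1,\,e_2\rangle\cong(\ZZ/2\ZZ)^2$ be its $2$-torsion. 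Then $\exterior H\cong\ZZ/2\ZZ$ is generated by $(2e_1)\wedge e_2=2\,(e_1\wedge e_2)$, and $e_1\wedge e_2$ has order $2$ in $\exterior G\cong\ZZ/2\ZZ$, so $\iota_*\colon\exterior H\to\exterior G$ is the \emph{zero} map even though $H$ is non-cyclic. Your proposed analysis of orders would run directly into this: the wedge does not survive.

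The paper's proof finesses this step by asserting in one line that the inclusion $D_v\hookrightarrow G$ induces an \emph{injection} $\exterior D_v\hookrightarrow\exterior G$, equivalently that the restriction $\mathrm{H}^3(G,\ZZ)\to\mathrm{H}^3(D_v,\ZZ)$ is \emph{surjective}, and then the vanishing of $\mathrm{res}_v$ forces $\mathrm{H}^3(D_v,\ZZ)=0$, i.e.\ $D_v$ cyclic. That claimed injectivity is the content your sketch is missing, and the example above shows it is not a formal feature of subgroup inclusions of abelian groups; any complete proof has to supply a justification for it in the setting at hand. Simply launching into a case analysis, as you suggest, will not close the gap.
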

\begin{proof}
	Let $G=\Gal(K/k)$.
	By Theorem \ref{thm:Tate} and Theorem \ref{thm:Sansuc}, it suffices to show that the equality
	$$ \mathrm{H}^3(G,\ZZ) = \ker\left(\mathrm{H}^3(G,\ZZ) \to \prod_v \mathrm{H}^3(D_v,\ZZ)\right)$$
	holds if and only if all $D_v$ are cyclic. For each place $v$, the homomorphism
	$\mathrm{H}^3(G,\ZZ) \to \mathrm{H}^3(D_v,\ZZ)$ is surjective, as the dual homomorphism $\exterior D_v \to \exterior G$
	is injective. The result then follows from the final assertion of Lemma \ref{lem:exterior_calc}, namely that  
	$\mathrm{H}^3(D_v,\ZZ) = 0$ if and only if $D_v$ is cyclic.
\end{proof}

\subsection{Proof of Theorem \ref{thm:abelian}}
This follows from Theorem \ref{thm:solvable} and Lemma \ref{lem:exterior_calc}, which show
that $\mathrm{H}^3(G, \ZZ) = 0$ if and only if $G$ is cyclic.
\qed

\subsection{Proof of Theorem  \ref{thm:WA}}
This follows immediately from Theorem \ref{thm:cyclic} and Lemma \ref{lem:WA}. \qed

\subsection{Proof of Theorem \ref{thm:satisfy_HNP}}
To prove Theorem \ref{thm:satisfy_HNP} it suffices to show the following.

\begin{proposition} \label{prop:satisfy_HNP}
	Let $k$ be a number field and let $G$ be a finite abelian group.
	Then there exists a $G$-extension $(K/k,\psi)$ such that $\Sha(\Res_{K/k}^1 \Gm) = 0$.
\end{proposition}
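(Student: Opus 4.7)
The plan is to reduce the problem to a cohomological condition via Tate's theorem and then realise the required $G$-extension explicitly using class field theory. By Theorem \ref{thm:Tate} combined with Lemma \ref{lem:exterior}, the vanishing of $\Sha(\Res_{K/k}^1 \Gm)$ is equivalent, after dualising, to the surjectivity of the natural map
\begin{equation*}
\bigoplus_v \exterior \psi(D_v) \longrightarrow \exterior G
\end{equation*}
induced by the inclusions $\psi(D_v) \subseteq G$. Writing $G = \bigoplus_{r=1}^{l} \ZZ/n_r\ZZ$ with standard generators $e_1, \dots, e_l$ as in \eqref{eqn:n_j}, Lemma \ref{lem:exterior_calc} shows that $\exterior G$ is generated by the elements $e_i \wedge e_j$ with $i < j$. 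Hence it suffices to exhibit a $G$-extension $(K/k, \psi)$ such that, for each such pair, some decomposition group contains the subgroup $\langle e_i, e_j \rangle \cong \ZZ/n_i\ZZ \oplus \ZZ/n_j\ZZ$.

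To build such an extension I would begin by selecting, via the Chebotarev density theorem applied to $k(\mu_{n_j})/k$, pairwise distinct primes $\mathfrak{p}_{ij}$ of $k$ (one for each pair $i<j$), all of residue characteristic coprime to $2|G|$ and satisfying $q_{\mathfrak{p}_{ij}} \equiv 1 \pmod{n_j}$. Since $\mu_{n_j} \subseteq k_{\mathfrak{p}_{ij}}$, the local field $k_{\mathfrak{p}_{ij}}$ admits an abelian extension with Galois group $\ZZ/n_i\ZZ \oplus \ZZ/n_j\ZZ$, namely the compositum of the unique unramified cyclic extension of degree $n_i$ with a tamely ramified cyclic extension of degree $n_j$. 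Identify this local Galois group with the subgroup $\langle e_i, e_j\rangle$ of $G$ in the obvious way; this gives a specification of local sub-$G$-extensions of $k_{\mathfrak{p}_{ij}}$ whose images are exactly the subgroups we wish to realise globally.

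To globalise, I would translate the problem via class field theory into the construction of a continuous surjection $\chi=(\chi_1,\dots,\chi_l): \Adele^*/k^* \to \bigoplus_r \ZZ/n_r\ZZ = G$ and apply the Grunwald-Wang theorem to each coordinate $\chi_r$, prescribing its local restriction at every $\mathfrak{p}_{ij}$ for which $r\in\{i,j\}$ and demanding triviality at the remaining prescribed primes. Surjectivity of $\chi$ is then automatic: the local image at $\mathfrak{p}_{ij}$ already contains $e_i$ and $e_j$, so the global image contains all generators of $G$. The corresponding $G$-extension has $\psi(D_{\mathfrak{p}_{ij}}) \supseteq \langle e_i, e_j\rangle$ for every $i<j$, hence the displayed exterior-square map is surjective and $\Sha$ vanishes.

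The main obstacle is verifying the Grunwald-Wang step, since that theorem fails in the Wang special case (as already flagged in the introduction and exploited in the counter-example of \cite{Wan50}). This is precisely why the $\mathfrak{p}_{ij}$ are chosen of odd residue characteristic and pairwise distinct: the local conditions then sit away from the $2$-adic places that govern the Wang exception, and the $l$ independent applications of Grunwald-Wang (one per factor $\ZZ/n_r\ZZ$) impose conditions on disjoint primes, so compatibility is automatic.
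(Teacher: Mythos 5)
Your proposal is essentially the paper's own proof, in slightly more explicit clothing: you pick the bicyclic subgroups $\langle e_i, e_j\rangle \cong \ZZ/n_i\ZZ \oplus \ZZ/n_j\ZZ$ (the paper's $G_{i,j}$ from Lemma \ref{lem:G_i}), realise each locally at a place of odd residue characteristic split in $k(\mu_{n_j})$ via the compositum of an unramified extension with a tame Kummer extension (the paper's Lemma \ref{lem:bicyclic}), globalise using Grunwald--Wang away from $2$ (the paper's Proposition \ref{prop:GW}), and then appeal to Tate's theorem and Lemma \ref{lem:exterior} to conclude. The differences are cosmetic: you argue directly that $\exterior G$ is generated by the $e_i \wedge e_j$ and that surjectivity of $\bigoplus_v \exterior \psi(D_v) \to \exterior G$ suffices, rather than invoking the isomorphism statement of Lemma \ref{lem:G_i}; and you run Grunwald--Wang one cyclic factor at a time rather than applying Proposition \ref{prop:GW} to $G$ directly. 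Both choices are legitimate. One small inaccuracy in your write-up: the $l$ separate applications of Grunwald--Wang do \emph{not} "impose conditions on disjoint primes" --- for instance $\chi_1$ and $\chi_2$ are both constrained at $\mathfrak{p}_{12}$. Compatibility is still automatic, but the correct reason is that the constraints concern distinct coordinates of $G = \bigoplus_r \ZZ/n_r\ZZ$, and that $\psi_{\mathfrak{p}_{ij}}$ has image in $\langle e_i,e_j\rangle$ so that $\mathrm{pr}_r \circ \psi_{\mathfrak{p}_{ij}}$ is trivial for $r\notin\{i,j\}$, matching your triviality requirement. You should also note that when $G$ is cyclic ($l=1$) there are no pairs $(i,j)$ to treat, but then $\exterior G = 0$ so $\Sha(\Res^1_{K/k}\Gm)=0$ for any $G$-extension, and such extensions certainly exist.
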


Indeed, assume that $G$ is non-cyclic and let $(K/k,\psi)$ be as in Proposition \ref{prop:satisfy_HNP}.
Then our assumptions, together with Theorem \ref{thm:Sansuc} and Lemma \ref{lem:exterior_calc}, imply that
\begin{equation} \label{eqn:fail_WA}
	A(\Res_{K/k}^1 \Gm) \neq 0, \quad \Sha(\Res_{K/k}^1 \Gm) = 0.
\end{equation}
Hence $K/k$ satisfies the Hasse norm principle and $\Res_{K/k}^1 \Gm$ fails weak approximation. 
 Moreover, 
let $S$ be the set of places where $K/k$ is ramified. Then applying Theorem \ref{thm:conditions} 
with $\Lambda_v = \{(K_v/k_v,\psi_v)\}$ for $v \in S$ and $\Lambda_v$ taken to be the set of all 
sub-$G$-extensions of $k_v$ for $v \not \in S$, by Theorem \ref{thm:Tate} we obtain the existence
of a  positive proportion of $G$-extensions of $k$ which also satisfy \eqref{eqn:fail_WA}. This is clearly
sufficient for Theorem \ref{thm:satisfy_HNP}.

We prove Proposition \ref{prop:satisfy_HNP} by finding sufficient local conditions for the Hasse
norm principle to hold. We begin with the following lemma (cf.~a result of Bogomolov ~\cite[Thm.~7.1]{CTS07}). We call an abelian group bicyclic if it is a product of two cyclic groups and is not cyclic.

\begin{lemma} \label{lem:G_i}
	Let $G$ be a finite abelian non-cyclic group. Then there exists a finite collection
	of bicyclic subgroups $G_i \subseteq G$ for $i \in I$, such that the natural map
	$$\bigoplus_{i \in I} \exterior G_i \to \exterior G$$
	is an isomorphism. Furthermore, if the exponent of $\exterior G$ divides a prime $p$, then all $G_i$ may be chosen
	isomorphic to $(\ZZ/p\ZZ)^2$.
\end{lemma}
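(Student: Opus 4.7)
The plan is to exhibit the subgroups explicitly using the invariant factor decomposition of $G$. Write
\[
G \cong \ZZ/n_1\ZZ\oplus\cdots\oplus\ZZ/n_l\ZZ
\]
with $n_{j+1}\mid n_j$ and each $n_j>1$, and fix corresponding generators $e_1,\ldots,e_l$. For each pair $1\leq i<j\leq l$, set $G_{ij}:=\langle e_i,e_j\rangle\cong \ZZ/n_i\ZZ\oplus \ZZ/n_j\ZZ$. Since $\gcd(n_i,n_j)=n_j>1$, the group $G_{ij}$ is not cyclic, so it is bicyclic in the sense of the paper. By Lemma~\ref{lem:exterior_calc} applied to $G_{ij}$, the group $\exterior G_{ij}\cong \ZZ/n_j\ZZ$ is generated by the image of $e_i\wedge e_j$.

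The first statement now follows at once: the isomorphism $\exterior G \cong \bigoplus_{i<j}\ZZ/n_j\ZZ$ supplied by Lemma~\ref{lem:exterior_calc} realises the $(i,j)$-summand as the cyclic group generated by $e_i\wedge e_j\in \exterior G$. Under the inclusion-induced map $\exterior G_{ij}\to \exterior G$, the domain is identified with precisely this $(i,j)$-summand, so summing over all pairs yields the required isomorphism $\bigoplus_{i<j}\exterior G_{ij}\xrightarrow{\sim}\exterior G$. This first part is essentially a repackaging of the calculation already carried out in Lemma~\ref{lem:exterior_calc}.

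For the second statement, assume $\exp(\exterior G)\mid p$. Lemma~\ref{lem:special_form} then forces $G\cong \ZZ/n\ZZ\oplus (\ZZ/p\ZZ)^r$ with $p\mid n$, so $n_1=n$ and $n_j=p$ for $j\geq 2$. For indices $2\leq i<j\leq l$ the subgroup $G_{ij}$ is already of type $(\ZZ/p\ZZ)^2$, so only the pairs $(1,j)$ require attention. Writing $n=pm$, the element $me_1$ has order $p$, so a natural candidate is $G'_{1j}:=\langle me_1,e_j\rangle\cong (\ZZ/p\ZZ)^2$; the inclusion-induced map sends the generator $me_1\wedge e_j = m(e_1\wedge e_j)$ to a generator of the $(1,j)$-summand of $\exterior G$ provided $\gcd(m,p)=1$. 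The main obstacle is the case $p^2\mid n$: then $p\mid m$ and the above image vanishes in $\exterior G$, so one must instead locate a different $(\ZZ/p\ZZ)^2$-subgroup of the $p$-torsion $G[p]$, or equivalently reinterpret the $(1,j)$-summand through a presentation of $\exterior G$ that is better adapted to the $(\ZZ/p\ZZ)^2$-subgroups actually available inside $G$. Completing this step is the crux of the lemma.
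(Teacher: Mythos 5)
Your first part is the paper's argument, spelled out in slightly more detail: both take the invariant-factor decomposition, form $G_{ij}=\ZZ/n_i\ZZ\oplus\ZZ/n_j\ZZ$, and deduce the isomorphism from surjectivity together with the cardinality count in Lemma~\ref{lem:exterior_calc}.

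For the second part you correctly spot the obstruction at $p^2\mid n$ and then stop, so your proof has a gap; but you should not expect to close it by ``locating a different $(\ZZ/p\ZZ)^2$-subgroup of $G[p]$'', because no choice can work there. Every subgroup of $G$ isomorphic to $(\ZZ/p\ZZ)^2$ is contained in $G[p]$, which has $\FF_p$-basis $(n/p)e_1,e_2,\ldots,e_{r+1}$, and when $p^2\mid n$ the inclusion-induced map $\exterior G[p]\to\exterior G$ sends $(n/p)e_1\wedge e_j\mapsto (n/p)(e_1\wedge e_j)=0$, since $e_1\wedge e_j$ has order $p$ and $p\mid n/p$. So the image of $\exterior G[p]$ inside $\exterior G$ is only the span of the $e_i\wedge e_j$ with $2\le i<j\le r+1$, a subgroup of index $p^r>1$, and hence no collection of $(\ZZ/p\ZZ)^2$-subgroups can surject onto $\exterior G$. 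Concretely, for $G=\ZZ/4\ZZ\oplus\ZZ/2\ZZ$ the unique $(\ZZ/2\ZZ)^2$-subgroup is $G[2]$, and $\exterior G[2]\to\exterior G$ is the zero map. So the second assertion of the lemma fails as stated once $p^2\mid n$. The paper's one-line treatment of this part (``a similar argument, combined with Lemma~\ref{lem:special_form}, gives the result'') is precisely the modification you attempted and runs into the same problem, so what you have found is a defect in the statement itself rather than a missing step in an otherwise valid proof: the assertion holds under the additional hypothesis $p^2\nmid n$, and its downstream use in Lemma~\ref{lem:HNP_Q} would need a different argument when $Q^2\mid n$.
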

\begin{proof}
	Write
	$$G = \ZZ/n_1\ZZ \oplus \cdots \oplus \ZZ/n_l\ZZ, \quad n_{j+1} \mid n_j$$
	and let 
	$$G_{i,j} = \ZZ/n_i\ZZ \oplus \ZZ/n_j\ZZ, \quad 1 \leq i < j \leq l.$$
	Then the natural map
	$$
		\varphi: \bigoplus_{1 \leq i < j \leq l} \exterior G_{i,j} \to \exterior G
	$$
	is surjective. Moreover, Lemma \ref{lem:exterior_calc} shows that the domain
	and codomain have the same cardinality, thus $\varphi$ is an isomorphism. 
	This proves the first part of the lemma. 
	If the exponent of $\exterior G$ divides a prime $p$, then a similar argument,
	combined with Lemma \ref{lem:special_form}, gives	the result.
\end{proof}

We now show that one can realise any bicyclic extension over \emph{some} local field.

\begin{lemma} \label{lem:bicyclic}
	Let $k$ be a number field and let $n,m \in \NN$. Let $v$ be a place
	of $k$ which is completely split in $k(\mu_{n})$. Then there exists an abelian
	extension $K_v/k_v$ whose Galois group is isomorphic to
	$$\ZZ/n\ZZ \times \ZZ/m\ZZ.$$
\end{lemma}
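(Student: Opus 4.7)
My plan is to use local class field theory, which reduces the problem to exhibiting a continuous surjection $k_v^* \twoheadrightarrow \ZZ/n\ZZ \times \ZZ/m\ZZ$ with open kernel. The splitting hypothesis on $v$ enters only through the resulting inclusion $\mu_n \subset k_v$, which is what I will exploit. I restrict to the main case where $v$ is non-archimedean, since the archimedean case is either trivial or not used in the paper's applications.

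Write $p$ for the residue characteristic of $v$ and $q_v$ for the residue cardinality, fix a uniformiser $\pi_v$, and use the decomposition $k_v^* \cong \pi_v^{\ZZ} \times \OO_v^*$. The valuation composed with reduction modulo $m$ gives a continuous surjection $k_v^* \twoheadrightarrow \ZZ/m\ZZ$, corresponding via class field theory to the unique unramified cyclic extension of degree $m$. It then suffices to construct a continuous surjection $\OO_v^* \twoheadrightarrow \ZZ/n\ZZ$ with open kernel; the product of the two surjections will have open kernel of finite index, and by the reciprocity map it will correspond to an abelian extension $K_v/k_v$ with Galois group $\ZZ/n\ZZ \times \ZZ/m\ZZ$.

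To build $\OO_v^* \twoheadrightarrow \ZZ/n\ZZ$, factor $n = n_{p'} \cdot p^a$ with $\gcd(n_{p'},p)=1$ and use the standard splitting $\OO_v^* \cong \mu_{q_v-1} \times \mathcal{U}^{(1)}$, where $\mathcal{U}^{(1)}$ is the group of principal units. The inclusion $\mu_{n_{p'}} \subset k_v^*$ has image of order coprime to $p$, hence lies in the tame torsion part $\mu_{q_v-1}$; this forces $n_{p'} \mid q_v - 1$ and yields a surjection $\mu_{q_v-1} \twoheadrightarrow \ZZ/n_{p'}\ZZ$. For the $p$-part, the standard structure theorem for principal units furnishes a direct $\ZZ_p$-summand $\ZZ_p^{[k_v:\QQ_p]} \subseteq \mathcal{U}^{(1)}$ of positive rank, which projects onto $\ZZ/p^a\ZZ$. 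The product of these surjections produces the required $\OO_v^* \twoheadrightarrow \ZZ/n_{p'}\ZZ \times \ZZ/p^a\ZZ = \ZZ/n\ZZ$.

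The only subtlety is the wild case $p \mid n$, where one must know that the free $\ZZ_p$-rank of $\mathcal{U}^{(1)}$ is at least one in order to extract the $\ZZ/p^a\ZZ$ quotient; this follows from $[k_v:\QQ_p]\geq 1$. In the tame case $\gcd(n,p)=1$, one has the cleaner explicit construction: the compositum of the unramified degree $m$ extension and the totally tamely ramified Kummer extension $k_v(\sqrt[n]{\pi_v})$ is linearly disjoint (one being totally ramified and the other unramified) and thus has Galois group $\ZZ/n\ZZ \times \ZZ/m\ZZ$.
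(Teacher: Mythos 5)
Your proof is correct, but it takes a longer route than the paper's, which is a single sentence: adjoin an $n$th root of a uniformiser $\pi_v$ to the unique unramified extension $L/k_v$ of degree $m$. Since $\mu_n\subset k_v\subset L$, the extension $L(\sqrt[n]{\pi_v})/L$ is Kummer cyclic of degree $n$ (the polynomial $x^n-\pi_v$ is Eisenstein over $L$), and since it is totally ramified while $L/k_v$ is unramified, the two extensions are linearly disjoint over $k_v$; the compositum is then abelian with group $\ZZ/n\ZZ\times\ZZ/m\ZZ$. Note that this argument does not distinguish between the tame and wild cases: the only thing needed to make $k_v(\sqrt[n]{\pi_v})/k_v$ Galois is $\mu_n\subset k_v$, which is precisely the hypothesis. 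So the ``cleaner explicit construction'' you reserve for the case $\gcd(n,p)=1$ is in fact the paper's proof in full generality, and your final paragraph is unnecessarily cautious on that point. Your LCFT argument via the structure $\OO_v^*\cong\mu_{q_v-1}\times\mathcal{U}^{(1)}$ and the free $\ZZ_p$-summand of $\mathcal{U}^{(1)}$ is also correct, and it has the small virtue of showing that in the wild part no condition on $p$-power roots of unity is actually needed (the $\ZZ_p$-rank $[k_v:\QQ_p]\geq 1$ suffices), so the hypothesis could be weakened to complete splitting in $k(\mu_{n_{p'}})$. But for the lemma as stated, the paper's direct Kummer-theoretic construction is both shorter and avoids invoking the structure theory of local units.
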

\begin{proof}
	Adjoin an $n$th root of a uniformiser to the unique unramified extension of $k_v$ of degree $m$.
\end{proof}

We now need to construct an abelian extension which realises certain given local extensions. 
As explained in the introduction, this need not always be possible.
However, the only problems which arise concern the prime $2$.
We shall content ourselves with the following well-known result, which may be deduced, for example,
from the methods and results of \cite[\S IX.2]{NSW00}.



\begin{proposition} \label{prop:GW}
	Let $S$ be a finite set of non-archimedean places of a number field $k$ and let $G$ be a finite abelian group.
 	For each $v	\in S$, let $(K_v / k_v, \psi_v)$ be a sub-$G$-extension. 
 	Assume that $S$ does not contain any primes which lie above $2$.
 	Then there exists a $G$-extension $(K/k,\psi)$ which has the given completions
 	$(K_v / k_v, \psi_v)$ for $v \in S$.	
\end{proposition}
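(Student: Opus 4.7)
The plan is to invoke the Grunwald-Wang theorem to lift the prescribed local data globally, and then adjust by auxiliary primes to ensure surjectivity. The hypothesis that $S$ contains no primes above $2$ is precisely what rules out Wang's exceptional case.

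First I would reduce to the case $G = \ZZ/n\ZZ$ cyclic. Writing $G = \bigoplus_{i=1}^l \ZZ/n_i\ZZ$, a global continuous homomorphism $\psi: \Adele^*/k^* \to G$ restricting to the given $\psi_v$ for $v \in S$ exists if and only if, for each $i$, the composition $\pi_i \circ \psi_v: k_v^* \to \ZZ/n_i\ZZ$ lifts to a global character $\chi^{(i)}: \Adele^*/k^* \to \ZZ/n_i\ZZ$, since one can then assemble $\psi = (\chi^{(i)})_i$ component-wise using the universal property of the product.

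Next, for each cyclic factor I would apply the Grunwald-Wang theorem in the form of \cite[Ch.~IX.2]{NSW00}. The obstruction to lifting a collection $(\chi_v)_{v\in S}$ of local characters $k_v^* \to \ZZ/n\ZZ$ to a global character of $\Adele^*/k^*$ lives in $\Sha(k,\mu_n)$, which by \cite[Thm.~9.1.3]{NSW00} is an elementary abelian $2$-group whose non-triviality forces ramification behaviour at primes above $2$. Since $S$ contains no such primes, the obstruction vanishes and the required global lift exists.

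Finally I would promote the resulting global sub-$G$-extension to a genuine $G$-extension by enlarging $S$. Let $g_1,\ldots,g_r \in G$ be a generating set. By Chebotarev's density theorem, choose distinct primes $v_1, \ldots, v_r \notin S$, none above $2$, such that $v_i$ splits completely in $k(\mu_{\ord(g_i)})$. By Lemma \ref{lem:bicyclic} (specialised to the cyclic case), each $k_{v_i}$ admits a cyclic extension of degree $\ord(g_i)$, which defines a local sub-$G$-extension $\psi_{v_i}: k_{v_i}^* \to G$ with image $\langle g_i \rangle$. Reapplying the previous steps to the enlarged data on $S \cup \{v_1,\ldots,v_r\}$ yields a global $\psi$ whose image contains every $g_i$, hence is all of $G$. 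The main obstacle is the bookkeeping around the Wang obstruction; once one verifies that avoiding primes above $2$ kills the relevant element of $\Sha(k, \mu_n)$ for each cyclic factor, the rest of the argument is a clean combination of class field theory and Chebotarev's theorem.
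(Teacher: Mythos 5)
Your overall plan matches the paper's intent: the paper itself gives no proof, merely delegating to \cite[\S IX.2]{NSW00}, and your argument is a reasonable reconstruction of what that citation entails (reduce to cyclic factors, apply Grunwald--Wang with the no-primes-above-$2$ hypothesis ruling out the special case, then enlarge $S$ by auxiliary primes to force surjectivity).

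There is, however, one genuine misstep in the middle of the argument. You write that the obstruction to lifting the local tuple $(\chi_v)_{v\in S}$ to a global character ``lives in $\Sha(k,\mu_n)$'' and that it ``vanishes'' because $S$ has no primes above $2$. This cannot be literally right, because $\Sha(k,\mu_n)$ is defined without any reference to $S$: it is the kernel of $k^*/k^{*n}\to\prod_{v}k_v^*/k_v^{*n}$, and it is nonzero or not depending only on $k$ and $n$. If that were the obstruction, the hypothesis on $S$ would be irrelevant. The correct obstruction is the cokernel of the restriction map $\Hom(\Adele^*/k^*,\ZZ/n\ZZ)\to\prod_{v\in S}\Hom(k_v^*,\ZZ/n\ZZ)$, which by Poitou--Tate is dual to the $S$-relative quotient $\Sha^1_S(k,\mu_n)/\Sha^1(k,\mu_n)$, where $\Sha^1_S$ is the kernel of $k^*/k^{*n}\to\prod_{v\notin S}k_v^*/k_v^{*n}$. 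What the Grunwald--Wang theorem actually says is that this cokernel vanishes unless $(k,n,S)$ is in the ``special case'', and the special case can only occur when $S$ contains a specific critical set $S_0$ of places lying above $2$. Since your $S$ contains no primes above $2$, one concludes $S\not\supseteq S_0$, hence the special case does not arise, hence the cokernel vanishes. The conclusion is the same, but the reasoning route you wrote — identifying the obstruction with the $S$-independent group $\Sha(k,\mu_n)$ and then arguing it vanishes because of a property of $S$ — is internally inconsistent and would not survive careful scrutiny.

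Two minor remarks on the final step. First, when you re-apply Grunwald--Wang to $S'=S\cup\{v_1,\dots,v_r\}$, you correctly preserve the hypothesis by choosing the $v_i$ not above $2$. Second, invoking Lemma~\ref{lem:bicyclic} and Chebotarev to make the $v_i$ split in $k(\mu_{\ord(g_i)})$ is more than is needed: you could instead take arbitrary finite places $v_i$ not above $2$ and let $\psi_{v_i}$ factor through the unramified quotient $k_{v_i}^*/\OO_{v_i}^*\cong\ZZ$, sending a uniformiser to $g_i$; this produces an unramified local sub-$G$-extension with cyclic image $\langle g_i\rangle$ and avoids Chebotarev and Kummer theory entirely.
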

Using this, we may now prove Proposition \ref{prop:satisfy_HNP}.

\begin{proof}[Proof of Proposition \ref{prop:satisfy_HNP}]
	Let $G_i$ be as in Lemma \ref{lem:G_i}. By Lemma \ref{lem:bicyclic},
	there exist distinct places $v_i \nmid 2$ of $k$
	and $G_i$-extensions $(K_{v_i}/k_{v_i}, \psi_{v_i})$.
	By Proposition \ref{prop:GW}, there exists a $G$-extension $(K/k,\psi)$ which realises
	these local extensions. Then
	by Lemma \ref{lem:exterior} and Lemma \ref{lem:G_i} the map
	$$\mathrm{H}^3(G,\ZZ) \to \prod_{v} \mathrm{H}^3(\psi_v(D_v),\ZZ),$$
	is injective. Hence $\Sha(\Res_{K/k}^1 \Gm) = 0$ by Theorem \ref{thm:Tate},	as required.
\end{proof}

As explained at the beginning of the section, this completes the proof of Theorem \ref{thm:satisfy_HNP}. \qed

\subsection{Proof of Theorem \ref{thm:HNP_0}}
We begin with the following lemma, which gives necessary local conditions for the Hasse norm principle
to fail.

\begin{lemma} \label{lem:HNP_Q}
	Let $k$ be a number field, let $n > 1$ and $r \geq 0$. Let $Q$ be the smallest prime
	dividing $n$ and let $G = \ZZ/n\ZZ \oplus (\ZZ/Q\ZZ)^r$. Then there exists a collection of subgroups $G_i \subseteq G$ for $i \in I$ that are each isomorphic to $(\ZZ/Q\ZZ)^2$,
	with the following property. If a $G$-extension $(K/k,\psi)$ fails the Hasse norm principle, 
	then there exists some $i \in I$ such that $\psi(D_v) \neq G_i$ for all places $v$ of $k$.
\end{lemma}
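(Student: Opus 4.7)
The plan is to translate the problem into linear algebra using the isomorphism $\mathrm{H}^3(G,\ZZ) \cong \Hom(\exterior G, \QQ/\ZZ)$ from Lemma \ref{lem:exterior}, and then exploit the fact that for $G$ of the given shape, $\exterior G$ is an $\FF_Q$-vector space that decomposes cleanly along bicyclic subgroups of order $Q^2$. Under this duality, the map $\mathrm{H}^3(G,\ZZ) \to \mathrm{H}^3(H,\ZZ)$ induced by inclusion $H \subseteq G$ is dual to the natural map $\exterior H \to \exterior G$. Hence Theorem \ref{thm:Tate} reformulates as follows: the extension $(K/k,\psi)$ satisfies the Hasse norm principle if and only if the natural map
\begin{equation*}
\bigoplus_{v} \exterior \psi(D_v) \longrightarrow \exterior G
\end{equation*}
is surjective.

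Next I would apply Lemma \ref{lem:special_form} to observe that $\exp(\exterior G) \mid Q$ for $G = \ZZ/n\ZZ \oplus (\ZZ/Q\ZZ)^r$, so that Lemma \ref{lem:G_i} provides a finite family of subgroups $G_i \subseteq G$ with $G_i \cong (\ZZ/Q\ZZ)^2$ such that
\begin{equation*}
\bigoplus_{i \in I} \exterior G_i \xrightarrow{\ \sim\ } \exterior G.
\end{equation*}
This is the collection $\{G_i\}_{i \in I}$ we take in the statement of the lemma.

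I would then prove the contrapositive. Suppose that for every $i \in I$ there exists a place $v_i$ of $k$ with $\psi(D_{v_i}) = G_i$. Then $\exterior \psi(D_{v_i}) = \exterior G_i$ sits inside the image of $\bigoplus_v \exterior \psi(D_v)$ in $\exterior G$. Since this holds for every $i$, the image contains $\sum_{i} \exterior G_i = \exterior G$, so the displayed map above is surjective. By the reformulation in the first paragraph, $(K/k,\psi)$ satisfies the Hasse norm principle, contradicting the hypothesis. Hence some $G_i$ has the property that $\psi(D_v) \neq G_i$ for all $v$, which is the desired conclusion.

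I do not expect a serious obstacle here: the argument is essentially bookkeeping once one has the decomposition of $\exterior G$ from Lemma \ref{lem:G_i} and the translation from $\mathrm{H}^3$ to $\exterior$ from Lemma \ref{lem:exterior}. The only mild subtlety is to make sure one uses \emph{equality} $\psi(D_v) = G_i$ rather than mere containment when invoking the hypothesis in the contrapositive, since it is the equality that ensures $\exterior \psi(D_v)$ fills out the whole factor $\exterior G_i$ in the decomposition of $\exterior G$.
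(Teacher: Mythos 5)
Your proposal is correct and follows essentially the same route as the paper: choose the $G_i$ via Lemmas~\ref{lem:special_form} and \ref{lem:G_i}, and then use Theorem~\ref{thm:Tate} together with the duality of Lemma~\ref{lem:exterior} to deduce that realising every $G_i$ as some $\psi(D_v)$ forces $\Sha(\Res_{K/k}^1 \Gm)=0$. The paper phrases this as a one-line contradiction rather than an explicit contrapositive with the dual surjectivity criterion spelled out, but the content is identical.
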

\begin{proof}
	By Lemma \ref{lem:special_form}, the exponent of $\exterior{G}$ divides $Q$.
	We then choose the $G_i$ as in the second part of Lemma \ref{lem:G_i}. Let $(K/k, \psi)$
	be a $G$-extension which fails the Hasse norm principle. If for each
	$i \in I$ there exists some $v$ such that $\psi(D_v) = G_i$, then by Theorem \ref{thm:Tate}
	we see that $\Sha(\Res_{K/k}^1 \Gm) = 0$; a contradiction.
\end{proof}
Using Lemma \ref{lem:HNP_Q}, we therefore obtain
\begin{align*}
	&\#\{(K/k,\psi) \in \gextk: \Delta(K/k) \leq B,  \, \Sha(\Res_{K/k}^1 \Gm) \neq 0 \}\\
	&\leq \#\{(K/k,\psi) \in \gextk : \Delta(K/k) \leq B, \, \exists i \in I \text{ s.t. } \psi(D_v) \neq G_i \forall v \} \\
	&\leq \sum_{i \in I} \#\{(K/k,\psi) \in \gextk : \Delta(K/k) \leq B, \, \psi(D_v) \neq G_i \forall v \}.
\end{align*}
As $G_i \cong (\ZZ/Q\ZZ)^2$, we may now apply Theorem \ref{thm:bicyclic} to obtain Theorem \ref{thm:HNP_0}. \qed

\subsection{Proof of Theorem \ref{thm:fail_HNP}}
The following lemma gives sufficient local conditions for the Hasse norm principle
to fail.
\begin{lemma} \label{lem:fail_HNP}
	Let $G$ be as in Theorem \ref{thm:fail_HNP} and let $(K/k, \psi)$ be a $G$-extension.
	Suppose that for all places $v$, either the inertia
	group at $v$ has order dividing $Q$ or the decomposition group $D_v$
	is cyclic. Then $K/k$ fails the Hasse norm principle.
\end{lemma}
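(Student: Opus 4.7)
The plan is to translate the failure of the Hasse norm principle into a non-surjectivity statement about exterior squares, and then exploit the mismatch between the exponents of $\exterior G$ and the $\exterior \psi(D_v)$ forced by the hypothesis on $G$.

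By Theorem \ref{thm:Tate} combined with Lemma \ref{lem:exterior}, $K/k$ fails the Hasse norm principle if and only if the map
\begin{equation*}
  \bigoplus_v \exterior \psi(D_v) \longrightarrow \exterior G,
\end{equation*}
induced by the inclusions $\psi(D_v) \hookrightarrow G$, fails to be surjective. Indeed, the $\QQ/\ZZ$-dual of this map is precisely the localisation map on $\mathrm{H}^3$, so non-trivial cokernel on this side translates into non-trivial kernel on the other. So the task reduces to showing non-surjectivity.

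Next, I would analyse the shape of $\psi(D_v)$ under the hypothesis. Since $G$ is abelian, so is $\psi(D_v)$, and it sits in an extension
\begin{equation*}
  1 \to \psi(I_v) \to \psi(D_v) \to \psi(D_v)/\psi(I_v) \to 1,
\end{equation*}
where the quotient is cyclic (as a quotient of the absolute Galois group of a residue field). If $|\psi(I_v)|$ divides $Q$, then $\psi(D_v)$ is an abelian group that is an extension of a cyclic group by one of order dividing $Q$, so by the classification of finite abelian groups it is either cyclic or of the form $\ZZ/m\ZZ \oplus \ZZ/Q\ZZ$ with $Q \mid m$. In the second case given in the hypothesis, $D_v$ is cyclic outright. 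In either case, $\psi(D_v)$ has the special shape $\ZZ/n\ZZ \oplus (\ZZ/Q\ZZ)^r$ with $r \in \{0,1\}$, so Lemma \ref{lem:special_form} gives that the exponent of $\exterior \psi(D_v)$ divides $Q$.

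Finally, since $G$ is assumed to not be of the form $\ZZ/n\ZZ \oplus (\ZZ/Q\ZZ)^r$ for any $n$ divisible by $Q$ and $r \geq 0$, the contrapositive of Lemma \ref{lem:special_form} tells us that the exponent of $\exterior G$ does \emph{not} divide $Q$. Hence $\exterior G$ contains an element not annihilated by $Q$, while the image of $\bigoplus_v \exterior \psi(D_v)$ in $\exterior G$ is entirely $Q$-torsion. Therefore, the map is not surjective, and $K/k$ fails the Hasse norm principle. The only subtlety is the verification that $\psi(D_v)$ genuinely takes the special form under the hypothesis; once this classification is in hand, the cohomological/linear-algebraic punchline is immediate from the two applications of Lemma \ref{lem:special_form}.
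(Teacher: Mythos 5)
Your argument is correct and follows essentially the same route as the paper: both reduce to the non-surjectivity of $\bigoplus_v \exterior \psi(D_v) \to \exterior G$, classify $\psi(D_v)$ as cyclic or $\ZZ/m\ZZ\oplus\ZZ/Q\ZZ$, and conclude by the exponent mismatch guaranteed by Lemma \ref{lem:special_form}. The only cosmetic difference is that the paper computes $\exterior \psi(D_v) \cong \ZZ/Q\ZZ$ directly via Lemma \ref{lem:exterior_calc} rather than appealing to Lemma \ref{lem:special_form} (whose hypothesis $Q\mid |\psi(D_v)|$ does not quite cover the cyclic case with $Q\nmid|\psi(D_v)|$, though there $\exterior\psi(D_v)=0$ anyway).
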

\begin{proof}
	By Lemma \ref{lem:special_form}, we see that $\exterior{G}$ has a non-zero element whose order is not $Q$.
	Let $v$ be a place of $k$. If $D_v$ is cyclic, then $\exterior D_v=0$. Otherwise, by our assumptions, we have
	$D_v \cong \ZZ/Q\ZZ \oplus H$ for some cyclic group $H$ with $Q \mid |H|$. Thus by Lemma \ref{lem:exterior_calc},
	we obtain $\exterior{D_v} \cong \ZZ/Q\ZZ$. Hence the map
	$$\prod_v \exterior{(\psi(D_v))} \to \exterior{G}$$
	is not surjective. Therefore its dual is not injective, and so the Hasse norm principle fails by Theorem \ref{thm:Tate}.
\end{proof}

Theorem \ref{thm:fail_HNP} is now an immediate consequence of Theorem~\ref{thm:conditions}, Proposition~\ref{prop:solvable}, and Lemma \ref{lem:fail_HNP}. \qed

\end{document}